\theoremstyle{plain}
\newtheorem{theorem}{Theorem}
\newtheorem{lemma}[theorem]{Lemma}
\theoremstyle{definition}
\newtheorem{remark}{Remark}
\let\hat\widehat
\let\tilde\widetilde
\let\bar\overline
\def\E{\mathbb{E}}
\def\P{\mathbb{P}}
\def\R{\mathbb{R}}
\def\Cov{\mathrm{Cov}}
\def\Var{\mathrm{Var}}
\def\tr{\mathrm{tr}}
\def\sign{\mathrm{sign}}
\def\hbeta{\hat{\beta}}
\DeclareMathAlphabet{\pazocal}{OMS}{zplm}{m}{n}
\def\cE{\pazocal{E}}
\def\cL{\pazocal{L}}
\def\cM{\pazocal{M}}
\def\cP{\pazocal{P}}
\def\cT{\pazocal{T}}
\def\figdir{figs}
\begin{document}

\title{Uniform Asymptotic Inference and the Bootstrap After Model Selection}
\author{Ryan Tibshirani, Alessandro Rinaldo, Rob Tibshirani, and  
  Larry Wasserman} 
\date{Carnegie Mellon University and Stanford University}
\maketitle

\begin{abstract}
Recently, \citet{tibshirani2016exact} proposed a method for making
inferences about parameters defined by model selection, in a typical 
regression setting with normally distributed errors. Here, we study
the large sample properties of this method, without assuming
normality.  We prove that the test statistic of
\citet{tibshirani2016exact} is asymptotically valid, as the number of
samples $n$ grows and the dimension $d$ of the regression problem
stays fixed. Our asymptotic result holds uniformly over a wide class
of nonnormal error distributions. We also propose an efficient
bootstrap version of this test that is provably (asymptotically)
conservative, and in practice, often delivers shorter
intervals than those from the original normality-based approach.
Finally, we prove that the test statistic of
\citet{tibshirani2016exact} does not enjoy uniform validity in a
high-dimensional setting, when the dimension $d$ is allowed grow. 
\end{abstract}

\section{Introduction}
\label{sec:intro}

There has been a recent surge of work on conducting formally valid
inference in a regression setting after a model selection event has  
occurred, see
\citet{berk2013valid,lockhart2014significance,tibshirani2016exact,
lee2016exact,fithian2014optimal,bachoc2014valid}, just to name a few. 
Our interest in this paper stems in particular from the work of
\citet{tibshirani2016exact}, who presented a method to produce valid
p-values and confidence intervals for adaptively fitted coefficients
from any given step of a sequential regression procedure like  
forward stepwise regression (FS), least angle regression (LAR), or the
lasso (the lasso is meant to be thought of as tracing out a sequence
of models along its solution path, as the penalty parameter descends
from  $\lambda=\infty$ to $\lambda=0$).    
These authors use a statistic that is carefully crafted to be pivotal
after conditioning on the model selection event.  This idea is not
specific to the sequential regression setting, and is an example
of a broader framework that we might call 
{\it selective pivotal inference}, applicable in many other settings, 
as in, e.g., \citet{taylor2016inference,lee2016exact,
lee2014exact,loftus2014significance,reid2017post, 
choi2014selecting,fithian2014optimal,hyun2016exact}. 
 
A key to the methodology in \citet{tibshirani2016exact}
(and much of the work in selective pivotal inference) is to the assumption of
normality of the errors. To fix notation, consider the regression of a response 
$Y \in \R^n$ on predictor
variables $X_1,\ldots,X_d \in \R^n$, stacked together as
columns of a matrix $X \in \R^{n\times d}$.  We will treat the
predictors $X$ are fixed (nonrandom), and assume the model
\begin{equation}
\label{eq:model}
Y_i = \theta_i + \epsilon_i, \;\;\; i=1,\ldots,n,
\end{equation}
where $\theta \in \R^n$ is an unknown mean parameter of
interest. \citet{tibshirani2016exact} assume that the errors
$\epsilon_1,\ldots,\epsilon_n$ are i.i.d.\ $N(0,\sigma^2)$, where the
error variance $\sigma^2>0$ is known.  An
advantage of their approach is that it does not require $\theta$ to be
an exact linear combination of the predictors $X_1,\ldots,X_d$, and
makes no assumptions about the correlations among these predictors.
But as far as the finite-sample guarantees are concerned, normality of
the errors is crucial.  In this work, we examine the properties of
the test statistic proposed in
\citet{tibshirani2016exact}---hereafter, the {\it truncated
  Gaussian} (TG) statistic---without using an assumption 
about normal errors.  We only assume that
$\epsilon_1,\ldots,\epsilon_n$ are i.i.d.\ from a distribution with
mean zero and essentially no other restrictions.  

A high-level description of the selective pivotal inference framework
for sequential regression is as follows (details
are provided in Section \ref{sec:inference}).  FS, LAR, or the
lasso is run for some number of steps $k$, and a model is selected,
call it $M$. For FS and LAR, this model will always have $k$ active 
variables, and for the lasso, it will have at most $k$, as variables
can be added to or deleted from the active set at each step.  We
specify a linear contrast of the mean $v^T \theta$ of interest, e.g.,
one giving the coefficient of a variable of interest in the model
$M$ at step $k$, in the regression of $\theta$ onto the active
variables. By assuming normal errors in \eqref{eq:model}, and
examining the distribution of $v^T Y$ conditional on having selected
model $M$, which we denote by \smash{$\hat{M}(Y)=M$}, we can construct
a confidence interval $C_\alpha$ satisfying
\begin{equation*}
\P\Big( v^T \theta \in C_\alpha \, \Big | \, 
    \hat{M}(Y)=M \Big) = 1-\alpha,
\end{equation*}
for a given $\alpha \in [0,1]$.  The interpretation: if we were to
repeatedly draw $Y$ from \eqref{eq:model} and run FS, LAR, or the
lasso for $k$ steps, and only pay attention to cases in which we
selected model $M$, then among these cases, the constructed intervals 
$C_\alpha=C_\alpha(Y;M)$ contain $v^T \theta$ with frequency
tending to $1-\alpha$.    

The above is a {\it conditional} perspective of the selective pivotal
inference framework for FS, LAR, and lasso.  An {\it unconditional} or marginal  
point of view is also possible, which we now describe. For each possible
selected model $M$, a constrast vector $v_M$ is 
specified, and the contrast $v_M^T \theta$ is considered when
model $M$ is selected, \smash{$\hat{M}(Y)=M$}.  To be concrete, we can
again think of a setup such that $v_M^T \theta$ gives the
coefficient of a variable in the model $M$ at step $k$, in
the projection of $\theta$ onto the active set.  Confidence intervals
are then constructed in exactly the same manner as above (without 
change), and conditional coverage over all models $M$ implies the 
following unconditional property for $C_\alpha$,
\begin{equation*}
\P\Big( v_{\hat{M}(Y)}^T \theta \in C_\alpha\Big) = 1-\alpha.
\end{equation*}
The interpretation is different: if we were to repeatedly draw $Y$
from \eqref{eq:model} and run FS, LAR, or lasso for $k$ steps, and 
construct confidence intervals
\smash{$C_\alpha=C_\alpha(Y;\hat{M}(Y))$}, then these intervals 
contain their respective targets \smash{$v_{\hat{M}(Y)}^T \theta$}
with frequency approaching $1-\alpha$.  Notice that, by construction, 
the target itself may change each time we draw $Y$, though it is the  
same for all $Y$ that give rise to the same selected model.  In terms
of the setting for regression contrasts described above, each time we
draw $Y$ and carry out the inferential procedure, the interval
$C_\alpha$ covers the coefficient of a possibly different variable
in the active model, in the projection of $\theta$ onto the
active variables. Figure \ref{fig:coverage} demonstrates this point.   

\begin{figure}[tb]
\centering
\includegraphics[width=\textwidth]{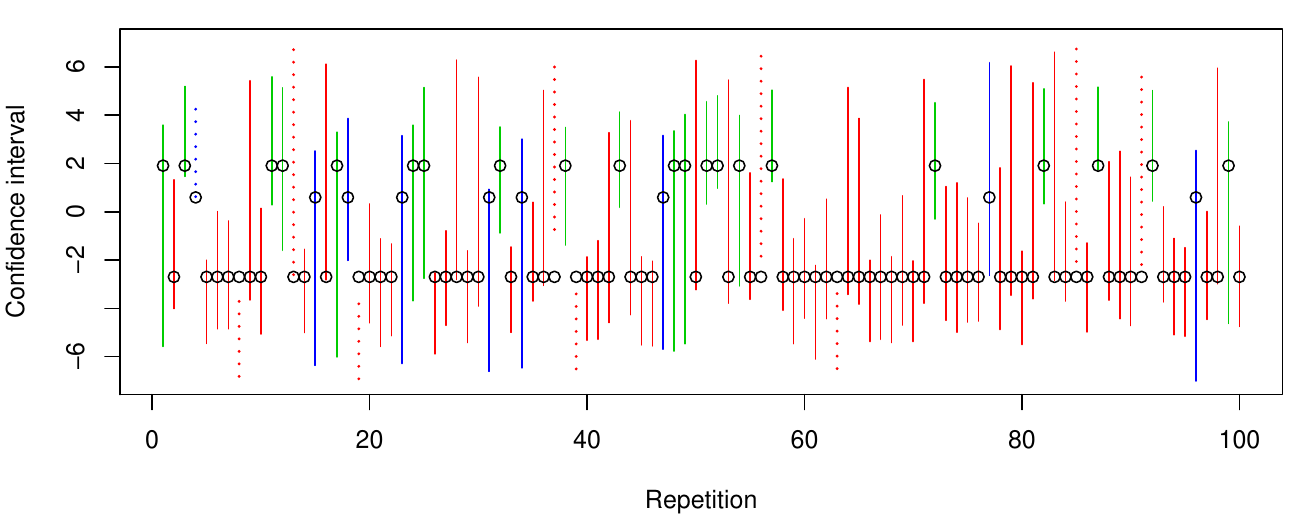}
\caption{\it\small An example of conditional and unconditional  
  coverage for one step of FS (the variables are normalized, and this
  is equivalent to one step of LAR, or lasso).  Here $n=20$ and $d=3$,
  and a response $Y$ was drawn 100 times from a model as in
  \eqref{eq:model} with i.i.d.\ $N(0,\sigma^2)$ errors.  The different
  colors denote different active models that were selected after one
  step, where an active model is a variable-sign pair, namely, the
  variable achieving the largest absolute inner product with $Y$, and
  the sign of this inner product.  Across the 100 repetitions, the
  circles denote a target to be covered, and the segments are 90\%    
  confidence intervals.  E.g, the color green corresponds to
  the model $+X_2$, so in repetitions 1, 3, 11, 12, etc.,
  $X_2^T Y$ was largest among all absolute inner products of
  variables with $Y$, and the green segments denote 90\% 
  confidence intervals designed to cover the contrast $X_2^T 
  \theta$.  Similarly, red corresponds to the model $-X_1$, and
  blue to $+X_3$.  Dotted segments indicate that the given interval
  does not cover its target.  The empirical coverage among green
  intervals: 21/21, among red intervals: 61/70, and among blue
  intervals: 8/9.  Hence in each case, the empirical
  coverage is close to the nominal 90\% level. Further, in total, 
  i.e., unconditionally, the empirical coverage is 90/100, right at
  the nominal 90\% level. }  
\label{fig:coverage}
\end{figure}

\subsection{Uniform convergence}
\label{sec:uniform}

When making asymptotic inferential guarantees, as we do in this paper,
it is important to be clear about the type of guarantee.  Here we
review the concepts of uniform convergence and validity. Let      
$\xi_1,\ldots,\xi_n \in \R^s$ be random vectors with joint
distribution $(\xi_1,\ldots,\xi_n) \sim F_n$, where $F_n \in \cP_n$, 
and $\cP_n$ is a class of distributions.  For
example, we could have $\xi_1,\ldots,\xi_n \in \R^s$ 
i.i.d.\ from $F$, and the class $\cP_n$ could contain product
distributions of the form $F_n=F \times \ldots \times F$ ($n$ times);
our notation allows for a more general setup than this one.   Let  
$W_n=T_n(\xi_1,\ldots,\xi_n)$ for a statistic $T_n$, and
$W \sim G$, where $W_n,W \in \R^q$. We will say that $W_n$, converges
{\it uniformly in distribution} to $W$, over $\cP_n$, provided that      
\begin{equation}
\label{eq:uniform_converge}
\lim_{n \to \infty} \; \sup_{F_n \in \cP_n} \; 
\sup_{x \in \R^q} \; \big|\P_{F_n}(W_n \leq x) - \P(W \leq x)\big| =
0. 
\end{equation}
(The above inequalities, as in $W_n \leq x$ and $W \leq x$, are meant 
to be interpreted componentwise; we are also implicitly assuming 
that the limiting distribution $G$ is continuous, otherwise the above
inner supremum should be restricted to continuity points $x$ of $G$.)  
This is much stronger than the notion of {\it pointwise}
convergence in distribution, which only requires that
\begin{equation}
\label{eq:pointwise_converge}
\lim_{n \to \infty} \; 
\sup_{x \in \R^q} \; \big|\P_{F_n}(W_n \leq x) - \P(W \leq x)\big| =
0,
\end{equation}
for a particular sequence of distributions $F_n$, $n=1,2,3,\ldots$.  

A recent article by \citet{kasy2015uniformity} emphasizes the
importance of uniformity in asymptotic approximations.  This authors
points out that a uniform version of the continuous mapping theorem
follows directly from a standard proof of the continuous mapping
theorem (e.g., see Theorem 2.3 in \citet{vandervaart1998asymptotic}). 

\begin{lemma}
\label{lem:cmt}
Suppose that $W_n$ converges uniformly in distribution to $W$, with
respect to the class $\cP_n$.   
Let $\psi : \R^q \to \R$ be a map that is continuous on a set
$D$, such that $\P(W \in D)=1$.
Then $\psi(W_n)$ converges uniformly in distribution to $\psi(W)$ with
respect to $\cP_n$. 
\end{lemma}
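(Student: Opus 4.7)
The plan is to adapt the textbook proof of the continuous mapping theorem (\citet{vdv1998}, Theorem 2.3) by tracking uniformity through each step. Since that proof rests on the Portmanteau lemma, my first task is to establish a uniform Portmanteau statement: assuming uniform CDF convergence as in \eqref{eq:uniform_converge}, for every closed set $F \subseteq \R^q$ one has
$$\limsup_{n \to \infty} \; \sup_{F_n \in \cP_n} \; \bigl(\P_{F_n}(W_n \in F) - \P(W \in F)\bigr) \leq 0,$$
with the analogous lower bound for open sets. To prove this I would fix $\epsilon > 0$ and, using continuity of $G$ together with outer regularity, construct a finite union $U$ of closed axis-aligned rectangles with $F \subseteq U$, $G(U) \leq G(F) + \epsilon$, and all corners being continuity points of $G$. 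By inclusion-exclusion, $\P_{F_n}(W_n \in U)$ and $\P(W \in U)$ are the same fixed finite linear combinations of CDF values at those corners, so \eqref{eq:uniform_converge} gives $\sup_{F_n \in \cP_n} [\P_{F_n}(W_n \in U) - \P(W \in U)] \to 0$, and the conclusion for $F$ follows from $\P_{F_n}(W_n \in F) \leq \P_{F_n}(W_n \in U)$.

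With uniform Portmanteau in hand, the rest of the argument copies \citet{vdv1998} almost verbatim. For any closed $C \subseteq \R$, the set inclusion
$$\psi^{-1}(C) \subseteq \overline{\psi^{-1}(C)} \subseteq \psi^{-1}(C) \cup D^c,$$
which follows from continuity of $\psi$ on $D$ and closedness of $C$, together with $G(D^c)=0$, yields
$$\limsup_{n \to \infty} \; \sup_{F_n \in \cP_n} \; \bigl(\P_{F_n}(\psi(W_n) \in C) - \P(\psi(W) \in C)\bigr) \leq 0,$$
and the analogous bound for open $C$ follows by the same argument. Applying these to $C=(-\infty,y]$ and $C=(-\infty,y)$ produces uniform-over-$F_n$ convergence of the CDF of $\psi(W_n)$ to that of $\psi(W)$ at every continuity point $y$ of the latter. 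A uniform version of Polya's theorem---whose standard partition-and-monotonicity proof carries over unchanged when pointwise convergence is strengthened to pointwise-uniform-over-$F_n$ convergence---then upgrades this to the sup-in-$y$ uniformity required by \eqref{eq:uniform_converge} (on the continuity set of the limiting CDF, per the parenthetical remark after \eqref{eq:uniform_converge}).

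The main obstacle is the uniform Portmanteau step, and within it, the reduction of probabilities of general closed sets to CDF values at finitely many continuity points of $G$. What makes this reduction uniform over $\cP_n$ is that the approximating rectangles depend only on $G$ and $\epsilon$---not on $F_n$---so the uniformity is inherited directly from the hypothesized uniform CDF convergence. Once this is in place, the rest is the textbook continuous-mapping argument with an extra $\sup_{F_n \in \cP_n}$ carried along at each step.
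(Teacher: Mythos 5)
The paper itself supplies no proof of Lemma~\ref{lem:cmt}: it only cites the remark of \citet{kasy2014} that a uniform version of the continuous mapping theorem ``follows directly from a standard proof,'' pointing to Theorem~2.3 of \citet{vdv1998}. Your proposal is precisely that program carried out in detail---reduce to a uniform Portmanteau statement, use the inclusion $\psi^{-1}(C) \subseteq \overline{\psi^{-1}(C)} \subseteq \psi^{-1}(C) \cup D^c$, and recover sup-in-$y$ uniformity via a Polya argument---so the route matches what the paper has in mind, and the overall structure is sound.

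One step would fail as literally written. You cover the closed set $F$ by a finite union $U$ of \emph{closed} rectangles and then assert that $\P_{F_n}(W_n \in U)$ and $\P(W \in U)$ are the same fixed finite linear combinations of CDF values at the corners. For the continuous limit $G$ this is true, but for a general $F_n$ a closed rectangle $[a,b]$ has probability $\P_{F_n}(W_n \leq b) - \P_{F_n}(W_n < a)$ (and the higher-dimensional analogue), where the lower-corner terms are left limits rather than CDF values. Since $F_n$ can put mass on the faces of $U$, those left limits need not match the CDF values, and the resulting discrepancy is not signed in the direction your bound requires. The repair is to use half-open rectangles $(a,b] = \prod_j (a_j,b_j]$: the inclusion-exclusion identity then expresses $\P_{F_n}(W_n \in (a,b])$ exactly in terms of CDF values at the $2^q$ corners for \emph{every} law, not just for $G$. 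Covering a closed set by finitely many such rectangles takes a little more care than you indicate---you should first discard a $G$-negligible unbounded tail by tightness (note that uniform CDF convergence to a tight $G$ does force uniform tightness of the $W_n$, by the same CDF bounds) and then work inside a large compact box, enlarging an open cover slightly to get half-open rectangles still contained in an outer-regular open neighborhood of $F$. With that repair, and the caveat that the Polya step is only needed (and only applies) when the limiting CDF of $\psi(W)$ is continuous---the relevant regime in the paper, where $\psi(W) \sim U(0,1)$---your argument closes correctly.
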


\citet{kasy2015uniformity} also remarks that the central limit theorem
for triangular arrays, specifically the Lindeberg-Feller central limit 
theorem (e.g., Proposition 2.27 in \citet{vandervaart1998asymptotic}) 
naturally extends to the uniform case.  The logic is, roughly
speaking: uniform convergence in \eqref{eq:uniform_converge} is
equivalent to pointwise convergence over {\it all} sequences of
distributions $F_n$,  
$n=1,2,3,\ldots$, and triangular arrays, by design, can have a
different distribution assigned to each row.  Therefore if the
Lindeberg condition holds for any possible sequence, then so does the
convergence to normality.

\begin{lemma}
\label{lem:clt}
Let $\xi_1,\ldots,\xi_n \in \R^q$ be a triangular array of independent
random vectors, with joint distribution $F_n$.  Assume 
$\xi_1,\ldots,\xi_n$ have mean zero and finite variance.  Also assume
that for any sequence $F_n \in \cP_n$, $n=1,2,3,\ldots$, we have  
\begin{equation*}
\lim_{n\to\infty} \; \sum_{i=1}^n \E_{F_n} \Big( \|\xi_i\|_2^2 \cdot
1\{\|\xi_i\|_2 \geq \epsilon \} \Big) = 0, 
\;\;\; \text{for all $\epsilon>0$},
\end{equation*}
and
\begin{equation*}
\lim_{n\to\infty} \; \sum_{i=1}^n \Cov_{F_n}(\xi_i) = \Sigma,
\end{equation*}
where $\Sigma$ does not depend on the sequence $F_n$,
$n=1,2,3,\ldots$.  Then  
\smash{$W_n = \sum_{i=1}^n \xi_i$} converges in distribution to 
$W \sim N(0,\Sigma)$, uniformly with respect to $\cP_n$.  
\end{lemma}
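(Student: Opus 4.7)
The plan is to reduce the desired uniform CLT to the classical Lindeberg--Feller CLT for triangular arrays, by exploiting the equivalence (essentially noted in the paragraph above the lemma) between uniform convergence of $W_n$ to $W$ over $\cP_n$ and pointwise convergence along every sequence $F_n \in \cP_n$. Once this reduction is in place, Lindeberg--Feller handles each sequence separately, and Polya's theorem converts convergence in distribution to the uniform-in-$x$ form required by \eqref{eq:pointwise_converge}.

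First I would prove the equivalence: $W_n \to W$ uniformly over $\cP_n$ in the sense of \eqref{eq:uniform_converge} if and only if, for every sequence $F_n \in \cP_n$, $n=1,2,3,\ldots$, the pointwise version \eqref{eq:pointwise_converge} holds. One direction is immediate, since for any fixed sequence the quantity $\sup_x |\P_{F_n}(W_n \leq x) - \P(W \leq x)|$ is dominated by the supremum over $\cP_n$. For the other direction I argue by contrapositive: if uniform convergence fails, then there exist $\epsilon>0$, a subsequence $n_k \uparrow \infty$, and $F_{n_k}^* \in \cP_{n_k}$ with $\sup_x |\P_{F_{n_k}^*}(W_{n_k} \leq x) - \P(W \leq x)| > \epsilon$ for all $k$. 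Extending $(F_{n_k}^*)_k$ arbitrarily to a full sequence in $\cP_n$, pointwise convergence along that sequence must fail, contradicting the standing hypothesis.

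Given the equivalence, fix an arbitrary sequence $F_n \in \cP_n$. The hypotheses of the lemma supply precisely the multivariate Lindeberg condition and convergence of the covariance sum to $\Sigma$ along this sequence, so the classical Lindeberg--Feller CLT for triangular arrays (e.g., Proposition 2.27 in \citet{vdv1998}, with the multivariate extension obtained via the Cram\'er--Wold device) gives $W_n \cd N(0,\Sigma)$. Since $N(0,\Sigma)$ has a continuous distribution function (in the singular case one first restricts to the column space of $\Sigma$), Polya's theorem upgrades this to $\sup_x |\P_{F_n}(W_n \leq x) - \P(W \leq x)| \to 0$, which is \eqref{eq:pointwise_converge} along $(F_n)$. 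Combined with the equivalence, this is exactly \eqref{eq:uniform_converge}. The main conceptual hurdle is stating the sequence-based equivalence precisely; once that is in hand, the remaining ingredients (Lindeberg--Feller, Cram\'er--Wold, Polya) are genuinely off-the-shelf.
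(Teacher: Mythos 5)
Your proposal is correct and fleshes out exactly the argument the paper only sketches in the paragraph preceding the lemma: reduce uniform convergence to pointwise convergence along arbitrary sequences $F_n\in\cP_n$ (the subsequence-extraction contrapositive you give is the standard way to make this precise), then apply Lindeberg--Feller with Cram\'er--Wold to each sequence and Polya's theorem to obtain the supremum-over-$x$ form. The paper does not write out a formal proof, so your elaboration matches its intended route rather than diverging from it.
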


In our work, a motivating reason for the study of uniform convergence
is the associated property of {\it uniform validity} of 
asymptotic confidence intervals.  That is, if $W_n=W_n(\mu)$ depends 
on a parameter $\mu=\mu(F_n)$ of the distribution $F_n$, but $W$ does 
not, then we can consider any $(1-\alpha)$ confidence set
$C_{n,\alpha}$ built from a $(1-\alpha)$ probability rectangle 
$R_\alpha$ of $W$,   
\begin{equation*}
C_{n,\alpha} = \{ \mu : W_n(\mu) \in R_{\alpha} \},
\end{equation*}
and the uniform convergence of $W_n$ to $W$, really just by
rearranging its definition in \eqref{eq:uniform_converge}, implies 
\begin{equation}
\label{eq:uniform_interval}
\lim_{n\to\infty} \; \sup_{F_n \in \cP_n} \; \sup_{\alpha \in [0,1]}
\;  \Big|\P_{F_n} \Big( \mu(F_n) \in C_{n,\alpha} \Big) -(1-\alpha)  
\Big| = 0.
\end{equation}
Meanwhile, pointwise convergence as in
\eqref{eq:pointwise_converge} only implies 
\begin{equation}
\label{eq:pointwise_interval}
\lim_{n\to\infty} \; \sup_{\alpha \in [0,1]}\; 
\Big| \P_{F_n} \Big( \mu(F_n) \in C_{n,\alpha} \Big) -(1-\alpha)
\Big| = 0,
\end{equation}
for a particular sequence $F_n$, $n=1,2,3,\ldots$. For a confidence
set satisfying \eqref{eq:uniform_interval}, and a given tolerance
$\epsilon>0$,  there exists a sample size $n(\epsilon)$ such that 
the coverage is  
guaranteed to be at least $1-\alpha-\epsilon$, for $n \geq
n(\epsilon)$, no matter the underlying distribution (over the
class of distributions in question). 
Note that this is not necessarily true for a pointwise confidence set 
as in \eqref{eq:pointwise_interval}, as the required sample size 
here could depend on the particular distribution under consideration.    

\subsection{Summary of main results}

An overview of our main contributions is as follows.

\begin{enumerate}
\item We establish that TG statistics for typical inferences along the FS,  
  LAR, and lasso paths only depend on the data $(X,Y)$ through
  \smash{$\frac{1}{n} X^T X$} and \smash{$\frac{1}{\sqrt{n}} X^T Y$} (Lemmas
  \ref{lem:master1}, \ref{lem:master2}, and \ref{lem:master3} in Section
  \ref{sec:master}), which is important since these two quantities have
  asymptotic limits in a standard low-dimensional asymptotic setup.

\item Placing mild constraints on the mean and error
  distribution in \eqref{eq:model}, and treating the dimension $d$ as 
  fixed, we prove that the TG test statistic is
  asymptotically pivotal, converging to $U(0,1)$ (the standard uniform
  distribution), when evaluated at the
  true population value for its pivot argument. We show that this
  holds uniformly over a wide class of distributions for the errors,
  without any real restrictions on the predictors $X$ (first part of
  Theorem \ref{thm:pivot} in Section \ref{sec:asymptotics}).      

\item The resulting confidence intervals are therefore asymptotically
  uniformly valid, over the same class of distributions (second part
  of Theorem \ref{thm:pivot} in Section \ref{sec:asymptotics}).   

\item The above asymptotic results assume that the error variance
  $\sigma^2$ is known, so for $\sigma^2$ unknown, we propose 
  a plug-in approach that replaces $\sigma^2$ in the TG statistic with 
  a simple estimate, and alternatively, an efficient bootstrap
  approach.  Both allow for conservative asymptotic inference (Theorem  
  \ref{thm:unknown} in Section \ref{sec:unknown}). 

\item We present detailed numerical experiments that support the
  asymptotic validity of the TG p-values and confidence intervals for
  inference in low-dimensional regression problems that have nonnormal 
  errors (Section \ref{sec:examples}). Our experiments reveal that the 
  plug-in and bootstrap versions also show good performance,
  and the bootstrap method can often deliver substantially shorter
  intervals than those based directly on the TG statistic.

\item Our experiments also also suggest that the TG test statistic
  (and plug-in, bootstrap variants) may be asymptotically valid in
  even broader settings not covered by our theory, e.g., problems
  with heteroskedastic errors and (some) high-dimensional problems.    

\item We prove that TG statistic does not exhibit a general uniform
  convergence to $U(0,1)$ when the dimension $d$ is allowed to
  increase (Theorem \ref{thm:highdim} in Section \ref{sec:highdim}).      
\end{enumerate}

\subsection{Related work}

A recent paper by \citet{tian2017asymptotics} is very related to our
work here. These authors examine the asymptotic distribution of the  
TG statistic under nonnormal errors.  Their main result proves
that the TG statistic is asymptotically pivotal, under some
restrictions on the model selection events in
question.  We view their work as providing a complementary perspective 
to our own: they consider a setting where the dimension $d$ grows, 
but place strong regularity conditions on the selected models; 
we adopt a more basic setting
with $d$ fixed, and prove more broad uniformly valid convergence   
results for the TG pivot, free of regularity conditions. 

In a sequence of papers,
\citet{leeb2003finite,leeb2006canone,leeb2008canone} prove that in a  
classical regression setting, it is impossible to find the
distribution of a post-selection estimator of the underlying
coefficients, even asymptotically.  Specifically, they prove for an
estimate \smash{$\hbeta$} of some underlying coefficient  
vector \smash{$\beta_0$}, any quantity of the form
\smash{$Q_n = \sqrt{n}A(\hbeta-\beta_0)$}, for a linear transform  
$A$, cannot be used for inference after model selection.  Though $Q_n$ 
can be made to be pivotal or at least asymptotically pivotal (once $A$ is chosen 
once appropriately), this is no longer true in the presence of selection, even
if the dimension $d$ is fixed and the sample size $n$ approaches $\infty$. 
Furthermore, they show that there is no uniformly consistent estimate
of the distribution of $Q_n$ 
(either conditionally or unconditionally), which makes $Q_n$
unsuitable for inference. This fact is essentially a manifestation of
the well-known Hodges phenomenon.  The selective pivotal inference framework, 
and hence our paper, circumvents this problem as we do not claim (nor attempt)
to estimate the distribution of $Q_n$, and instead make inferences
using an entirely different pivot that is constructed via a careful
conditioning scheme.    

\subsection{Notation}

As our paper considers an asymptotic regime, with the number of 
samples $n$ growing, we will often use a subscript $n$ to mark the
dependence of various quantities on the sample size. An exception 
is our notation for the predictors, response, and mean, which we
will always denote by $X,Y,\theta$, respectively. Though these
quantities will (of course) vary with $n$, our notation hides
this dependence for simplicity.  

When it comes to probability statements involving $Y$, drawn from
\eqref{eq:model}, we will write $\P_{f(\theta)=\mu}(\,\cdot\,)$ to 
denote the probability operator under a mean vector
$\theta$ such that $f(\theta)=\mu$.  With a subscript omitted, as in 
$\P(\,\cdot\,)$, it is implicit that the probability is taken under
$\theta$. Also, we will generally write $y$
(lowercase) for an arbitrary response vector, and $Y$ (uppercase) for
a random response vector drawn from \eqref{eq:model}.  This is
intended to distinguish statements that 
hold for an arbitrary $y$, and statements that hold for a 
random $Y$ with a certain distribution. 
Lastly, we will denote \smash{$\hat{M}$} the model selection
procedure associated with the regression algorithm under consideration
(FS, LAR, or lasso), and we  
will treat this as a mapping from $\R^n$ to the space of models, so
that \smash{$\hat{M}(y)$} is a fixed quantity, representing 
the model selected when the response is the fixed vector $y$, and
\smash{$\hat{M}(Y)$} is a random variable, representing the model
selected when the response is the random vector $Y$. Similar
notation will be used for related quantities.

\section{Selective inference}
\label{sec:inference}

In this section, we review the selective pivotal inference framework
for sequential regression procedures.  We present interpretations for the
inferences from both conditional and unconditional perpsectives, in Sections  
\ref{sec:conditional} and \ref{sec:unconditional}, respectively.  The other
subsections provide the necessary details for understanding the framework, 
beginning with the selection events encountered along the FS, LAR, and lasso
paths. 

\subsection{Model selection}
\label{sec:selection}

Consider forward stepwise regression (FS), least angle regression
(LAR), or the lasso, run for a number of steps  
$k$, where $k$ is arbitrary (but treated as fixed throughout this 
paper). Such a procedure defines a {\it partition} of the sample
space, \smash{$\R^n=\bigcup_{M \in \cM} \Pi_M$}, with elements 
\begin{equation}
\label{eq:partition}
\Pi_M = \{y : \hat{M}(y) = M\}, \;\;\; M \in \cM.
\end{equation}
Here \smash{$\hat{M}(y)$} denotes the {\it selected model} from 
the given $k$-step procedure, run on $y$, and $\cM$
is the space of possible models.  Calling \smash{$\hat{M}(Y)$} a
selected model may be bit of an abuse of common nomenclature, 
because, as we will see, \smash{$\hat{M}(y)$} will describe more than
just a set of selected variables at the point $y$.  In fact, one can
think of \smash{$\hat{M}(y)$} as a representation of the decisions
made by the algorithm across its $k$ steps. For FS, we define 
\smash{$\hat{M}(y)=\{(\hat{A}_\ell(y),\hat{s}_\ell(y)): \ell=1,\ldots,
  k\}$}, comprised of two things:   
\begin{enumerate}
\item a sequence of active sets \smash{$\hat{A}_\ell(y)$},
  $\ell=1,\ldots,k$, denoting the variables that are given nonzero
  coefficients, at each of the $k$ steps;
\item a sequence of sign vectors \smash{$\hat{s}_\ell(y)$},
  $\ell=1,\ldots,k$, denoting the signs of nonzero coefficients, at
  each of the $k$ steps. 
\end{enumerate}
The active sets are nested across steps, \smash{$\hat{A}_1(y)
  \subseteq \hat{A}_2(y) \subseteq \hat{A}_3(y) \subseteq \ldots$}, 
as FS selects one variable to add to the
active set at each step.   However, the sign vectors
\smash{$\hat{s}_1(y), \hat{s}_2(y), \hat{s}_3(y), \ldots$} are not,
since these are determined by least squares on the active variables at
each step.  Hence, as defined, the number of possible models
\smash{$\hat{M}(y)$} after $k$ steps of FS is 
\begin{equation*}
|\cM| =
d\cdot(d-1)\cdots(d-k+1)\cdot 2 \cdot 2^2 \cdots 2^k= 
O(d^k 2^{k^2}).
\end{equation*}
Moreover, the corresponding  
partition elements $\Pi_M$, $M \in \cM$ in \eqref{eq:partition}  
are all convex cones.  The proof of this fact is not difficult, and
requires only a slight modification of the arguments in
\citet{tibshirani2016exact},
given in Appendix \ref{app:partition} for completeness. The 
result is easily seen for $k=1$: after one step of FS,
assuming without a loss of generality that $X_1,\ldots,X_d$ have unit
norm, we can express, e.g.,  
\begin{align*}
\big\{y : \big(\hat{A}_1(y),\hat{s}_1(y)\big)=(1,1) \big\} &= 
\big\{ y : X_1^T y \geq \pm X_j^T y, \; 
j=2,\ldots,d\big\} \\ 
&= \bigcap_{j=2}^d \big\{ y : (X_1-X_j)^T y \geq 0 \big\}  
\cap \big\{y : (X_1+X_j)^T y \geq 0\big\},
\end{align*}
the right-hand side above being an intersection of half-spaces passing
through zero, and therefore a convex cone.  As we enumerate the
possible choices for \smash{$(\hat{A}_1(y),\hat{s}_1(y))$}, these
cones form a partition of $\R^n$.  Figure \ref{fig:partition} shows
an illustration.  

\begin{figure}[h]
\centering
\includegraphics[width=0.6\textwidth]{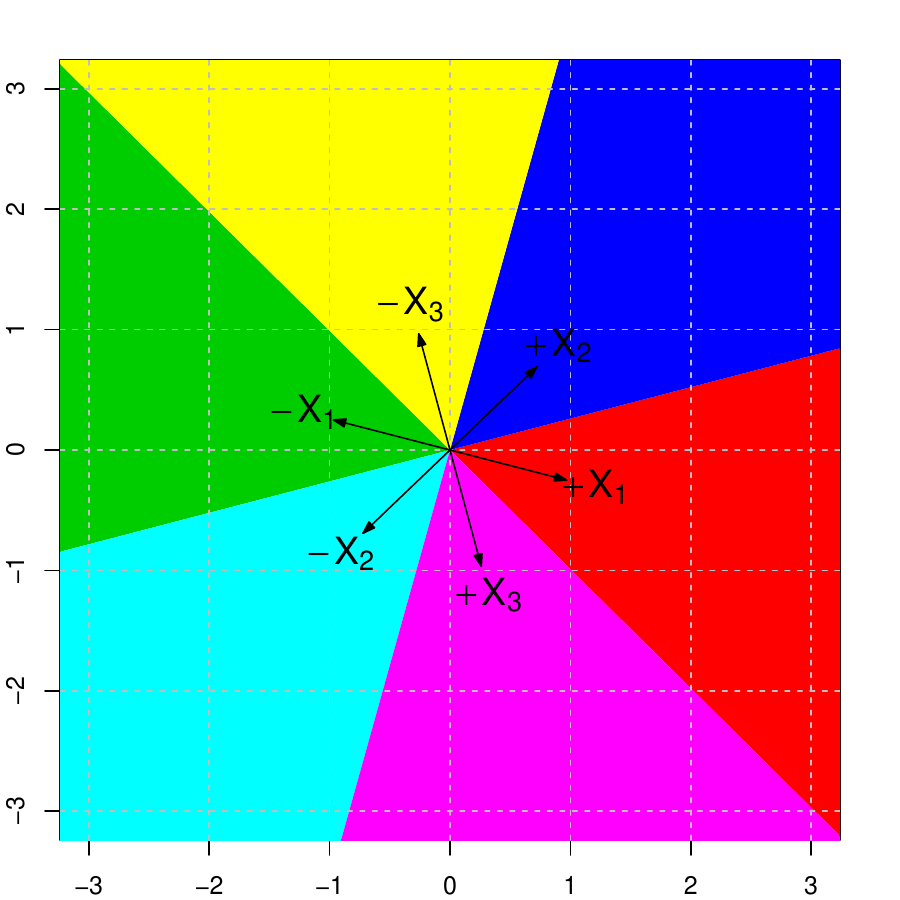}
\caption{\it\small An example of the model selection partition from
  one step of FS (the variables are normalized, and this is equivalent
  to one step of LAR, or lasso).  Here $n=2$ and $d=3$.  The colors 
  indicate the regions of the sample space $\R^2$ for which different
  models---pairs of active variables and signs---are selected, so
  that, e.g., the red region contains points in $\R^2$ that are
  maximally aligned with $X_1$.} 
\label{fig:partition}
\end{figure}

For LAR and the lasso, we need to modify the definition of the
selected model \smash{$\hat{M}(y)$} in order for the resulting   
partition elements in \eqref{eq:partition} to be convex cones.
We add an ``extra'' bit of model information and
define \smash{$\hat{M}(y)=\{(\hat{A}(y),\hat{s}(y),\hat{I}_\ell(y)) :
\ell=1,\ldots,k\}$}, where
\smash{$\hat{I}_\ell(y)$} is a list of variables that play a special role 
in the construction of the LAR or lasso active set at the $\ell$th
step, but that a user would not typically pay attention to.  
In truth, the latter quantity is only a detail that is included so
that $\Pi_M$, $M \in \cM$ are convex cones (without it, the partition 
elements would each be a union of cones), and so we do not describe it
here.  Furthermore, it does not affect our treatment of 
inference in what follows, and for this reason, we will largely 
ignore the minor differences in model selection events between FS,
LAR, and lasso hereafter.  

The description of
\smash{$\hat{I}_\ell(y)$, $\ell=1,\ldots,k$}, and the proof that the
partition elements $\Pi_M$, $M \in \cM$ are cones for LAR and
lasso, mirrors that in \citet{tibshirani2016exact}, and is again given in 
Appendix \ref{app:partition}.   Like FS, the active sets 
from LAR are nested, \smash{$\hat{A}_1(y)
  \subseteq \hat{A}_2(y) \subseteq \hat{A}_3(y) \subseteq \ldots$},
since one variable is added to the active set at each
step.  But for the lasso, this is not necessarily true, as in this
case variables can be either added or deleted at each step.

\subsection{Inference after selection}
\label{sec:conditional}

We review the selective pivotal inference approach for 
hypothesis testing after model selection
with FS, LAR, or the lasso. The technical details of the TG
statistic are deferred to the next two subsections, as they
are not needed to 
understand how the method is used.  The null hypotheses we consider
are of the form $H_0: v^T \theta = 0$.  An important special case
occurs when the  linear contrast $v^T \theta$ gives a normalized 
coefficient in the regression of $\theta$ onto a subset of the
variables in $X$.  To be specific, in this case 
\smash{$v=X_A(X_A^T X_A)^{-1} e_j/(e_j^T(X_A^T X_A)^{-1}
  e_j)^{1/2}$}, for a subset $A \subseteq \{1,\ldots,d\}$, 
where we let $X_A \in \R^{n \times |A|}$ denote the submatrix of $X$
whose columns correspond to elements of $A$ (with
\smash{$X_A^T X_A$} assumed to be invertible for the chosen subset),
and we write $e_j$ for the $j$th standard basis vector.  This gives 
\begin{equation}
\label{eq:proj_coef}
v^T \theta = \frac{e_j^T (X_A^T X_A)^{-1} X_A^T \theta}
{\sqrt{e_j^T(X_A^T X_A)^{-1} e_j}} := \beta_j(A),
\end{equation}
and therefore $H_0 : v^T \theta = 0$ is a test for the significance 
of the $j$th normalized coefficient in the linear projection of
$\theta$ onto $X_A$, written as $\beta_j(A)$ for short. 
(Though the normalization in the denominator is 
irrelevant for this significance test, it acts as a key scaling
factor for the asymptotics in Section \ref{sec:asymptotics}.) The idea
of using a projection parameter for inference, $\beta_j(A)$,
has also appeared in, e.g., \citet{berk2013valid,
wasserman2014discussion,lee2016exact}.
Here is now a summary of the testing framework.

\begin{itemize}
\item For each possible model $M \in \cM$, and any $v \in \R^n$
  and $\mu \in \R$, a TG statistic $T(\,\cdot\,;M,v,\mu)$ is defined
  (see \eqref{eq:cond_pivot_defn}, in the next subsection), whose
  domain is the partition element $\Pi_M$.
  This can be used as follows: if $Y$ is drawn from \eqref{eq:model},
  and lands in the partition element $\Pi_M$ for model $M$, then the
  statistic $T(Y;M,v,\mu)$ provides us with a test for the hypothesis  
  $H_0 : v^T \theta = \mu$.

\item A concrete case to keep in mind, denoting $M=\{(A_\ell,s_\ell)
  : \ell=1,\ldots,k\}$, is a choice of $v$ such that
  \smash{$v^T \theta = \beta_j(A_\ell)$}, in the notation of
  \eqref{eq:proj_coef}.  This is the $j$th normalized coefficient in
  the regression of $\theta$ onto the active variables
  \smash{$X_{A_\ell}$}, for an active set $A_\ell$ at some step
  $\ell=1,\ldots,k$.    

\item Assume i.i.d.\ $N(0,\sigma^2)$ errors in \eqref{eq:model}.
  Under the null hypothesis,
  the TG statistic has a standard uniform distribution,
  over draws of $Y$ that land in $\Pi_M$. Mathematically, this
  is the property    
  \begin{equation}
    \label{eq:cond_pivot}
    \P_{v^T \theta=\mu} \Big( T(Y; M,v,\mu) \leq t \, \Big| \, 
    \hat{M}(Y)=M \Big) = t,
  \end{equation}
  for all $t \in [0,1]$. The probability above is taken over an
  arbitrary mean parameter $\theta$ for which $v^T \theta=\mu$ (in   
  fact, the TG statistic is constructed so that the law of
  \smash{$T(Y; M,v,\mu) \, | \, \hat{M}(Y)=M$} only depends on 
  $\theta$ through $v^T \theta$, so this is unambiguous).  
  In order for \eqref{eq:cond_pivot} to hold, of course, $v$ and $\mu$
  cannot be random, i.e., they cannot depend on $Y$, though they can
  be functions of $M$.  

\item Thus $T(Y;M,v,\mu)$ serves as a valid p-value
  (with exact finite sample size) for testing the null hypothesis
  $H_0 : v^T \theta=\mu$, conditional on \smash{$\hat{M}(Y)=M$}.  

\item A confidence interval is obtained by inverting the test in
  \eqref{eq:cond_pivot}.  Given a desired confidence level $1-\alpha$,
  we define $C_\alpha$ to be the set of all values $\mu$ such 
  that $\alpha/2 \leq T(Y; M,v,\mu) \leq 1-\alpha/2$.
  Then, by construction, the property in \eqref{eq:cond_pivot}
  (which we reiterate, assumes i.i.d.\ $N(0,\sigma^2)$ errors) 
  translates into 
  \begin{equation}
    \label{eq:cond_interval}
    \P\Big( v^T \theta \in C_\alpha \, \Big | \, 
    \hat{M}(Y)=M \Big) = 1-\alpha.
  \end{equation}
  The interpretation of the above statement is straightforward: the
  random interval $C_\alpha$ contains the fixed parameter $v^T \theta$ 
  with probability $1-\alpha$, conditional on \smash{$\hat{M}(Y)=M$}.  
\end{itemize}

\subsection{The truncated Gaussian pivot}
\label{sec:pivot}

We now describe the truncated Gaussian (TG) pivotal quantity in
detail.  As defined in Section \ref{sec:selection}, if
we write \smash{$\hat{M}(y)$} for the  
selected model from the given algorithm (FS, LAR, or lasso), run for   
$k$ steps on $y$, 
then \smash{$\Pi_M = \{y : \hat{M}(y)=M\}$} is a convex cone, for any
fixed achieveable model $M$. Hence
\begin{equation*}
\Pi_M = \{y : \hat{M}(y)=M\} = \{y : Q_M \, y \geq 0\},
\end{equation*}
for a fixed matrix $Q_M$ (here the inequality is meant to be
interpreted componentwise).  Now to define the pivot
$T(\,\cdot\,;M,v,\mu)$  for testing $H_0: v^T \theta =\mu$,
several preliminary quantities must be introduced:
\begin{equation*}
w = \frac{Q_M \, v}{\|v\|_2^2}, 
\;\;\;
a(y;M,v) = v^T y - \min_{i : w_i > 0} \; 
\frac{(Q_M \, y)_i}{w_i}, 
\;\;\; \text{and} \;\;\;
b(y;M,v) = v^T y - \max_{i : w_i < 0} \; 
\frac{(Q_M \, y)_i}{w_i}.
\end{equation*}
The TG pivot is then defined by
\begin{equation}
\label{eq:cond_pivot_defn}
T(y; M,v,\mu) = \frac
{\displaystyle
\Phi\Bigg(\frac{b(y;M,v)-\mu}{\sigma\|v\|_2}\Bigg)- 
\Phi\Bigg(\frac{v^T y-\mu}{\sigma\|v\|_2}\Bigg)}
{\displaystyle
\Phi\Bigg(\frac{b(y;M,v)-\mu}{\sigma\|v\|_2}\Bigg) - 
\Phi\Bigg(\frac{a(y;M,v)-\mu}{\sigma\|v\|_2}\Bigg)}.
\end{equation}
This has the following property, as stated in \eqref{eq:cond_pivot}:
when $Y$ is drawn from \eqref{eq:model} with i.i.d.\ $N(0,\sigma^2)$ 
errors, and $v^T \theta=\mu$, the pivot $T(Y;M,v,\mu)$ is
uniformly distributed conditional on \smash{$\hat{M}(Y)=M$}.
See Lemmas~1 and 2 in \citet{tibshirani2016exact} for a proof of this result. 

\subsection{P-values and confidence intervals}
\label{sec:pvals}

For the null hypothesis $H_0: v^T \theta = 0$, we have seen from
\eqref{eq:cond_pivot} that $T(Y;M,v,0)$ acts as a proper
(conditional) p-value.  But as defined in \eqref{eq:cond_pivot_defn},
the statistic $T(Y;M,v,0)$ is implicitly aligned to have
power against the one-sided alternative hypothesis $H_1 : v^T \theta >
0$. 
Therefore, when seeking to test
the significance of, say, the $j$th coefficient in the projected
linear model of $\theta$ on  \smash{$X_{A_\ell}$}, we will actually
choose $v$ so that 
\begin{equation}
\label{eq:sign_ctrst}
v^T \theta = 
(s_\ell)_j \, \beta_j(A_\ell),
\end{equation}
where recall
\smash{$(s_\ell)_j =\sign(e_j^T 
  (X_{A_\ell}^T X_{A_\ell})^{-1} X_{A_\ell}^T  y)$} is the sign
of the $j$th coefficient in the regression of $y$ onto the set 
$A_\ell$ of active variables, for $y \in \Pi_M$. 
This orients the test in a meaningful direction: $v^T \theta > 0$ 
is now the hypothesis that the $j$th coefficient in the projection of 
$\theta$ onto \smash{$X_{A_\ell}$} is nonzero, and {\it shares the
  same sign} as the $j$th coefficient in the projection of $y$ onto 
\smash{$X_{A_\ell}$}, over $y \in \Pi_M$; that is, with the
above choice of $v$, the p-value $T(Y;M,v,0)$ is designed to be small
when the $j$th coefficient in the projection of $Y$ on 
\smash{$X_{A_\ell}$} corresponds to a projected population effect that
is both large and of the same sign as this computed coefficient.
Beyond the current subsection, we will not be explicit about the  
sign factor
in \eqref{eq:sign_ctrst} when discussing such contrasts (i.e.,
those giving regression coefficients in a projected linear model
for $\theta$), but it is implicitly understood when computing
one-sided p-values.      

A statistic aligned to have power against the two-sided
alternative $H_1 : v^T \theta \not= 0$ is simply given by 
$2\min\{T(Y;M,v,0),\,1-T(Y;M,v,0)\}$.  
For purely testing purposes, we find the one-sided p-values discussed 
above to be more natural, and hence these will serve as our
default.  On the other hand, for constructing confidence
intervals, we prefer to invert the two-sided statistics, since these 
lead to two-sided intervals.  As
\begin{equation*}
2\min\{T(Y;M,v,\mu),\;1-T(Y;M,v,\mu)\} \geq \alpha \iff 
\alpha/2 \leq T(Y;M,v,\mu) \leq 1-\alpha/2,
\end{equation*}
the previously described confidence interval in 
\eqref{eq:cond_interval} is just given by inverting the
two-sided pivot. 

To summarize: the default in this work, as with \citet{tibshirani2016exact}, is to
consider one-sided hypothesis tests, but two-sided intervals.  These
are just two slightly different uses of the same pivot.

\subsection{Inference after selection, revisited}
\label{sec:unconditional}

We have portrayed selective pivotal inference, in sequential
regression procedures, as a method for producing conditional p-values
and intervals. An unconditional interpretation of this framework 
is also possible, which we describe here.

\begin{itemize}
\item For each model $M \in \cM$, a contrast
  vector $v_M \in \R^n$ and pivot value $\mu_M \in \R$ are identified, 
  so that the hypothesis \smash{$H_{0,M} : v_M^T \theta = \mu_M$} is 
  to be tested whenever $y \in \Pi_M$, i.e., whenever
  \smash{$\hat{M}(y)=M$}. A  
  TG statistic $\cT(\,\cdot\,;V,U)$ is then defined, whose domain
  is the entire sample space $\R^n$.  Here we write $V=\{v_M :
    M \in  \cM\}$ and $U = \{ \mu_M : M \in \cM\}$ to denote the
    collection of contrast vectors and pivot values, respectively,
    across partition elements---we will also refer to these as {\it
      catalogs}.  This unconditional TG statistic is defined by
\begin{equation*}
\cT(\,\cdot\,;V,U) 
= \sum_{M \in \cM} T (\,\cdot\, ; M, v_M, \mu_M) \,
1_{\Pi_M}(\,\cdot\,),
\end{equation*}
where \smash{$1_{\Pi_M}(\,\cdot\,)$} denotes
the indicator function for the partition element $\Pi_M$ (and
\smash{$T(\,\cdot\,; M, v_M, \mu_M)$} is as before, defined in
\eqref{eq:cond_pivot_defn}). The unconditional statistic can be used
as follows: if a response $Y$ is drawn from \eqref{eq:model}, then  
we can form \smash{$\cT(Y;V,U) =
  T(Y;\hat{M}(Y),v_{\hat{M}(y)},\mu_{\hat{M}(y)})$} to test the hypothesis 
\smash{$H_0 : v_{\hat{M}(Y)}^T \theta = \mu_{\hat{M}(Y)}$}. 

\item A concrete case to keep in mind is when $V$ assigns a
  contrast vector $v_M$ to each model $M$, such that \smash{$v_M^T 
    \theta=\beta_{j_M}(A_{\ell_M})$}, in the notation of 
  \eqref{eq:proj_coef}, where $M = \{(A_\ell,s_\ell) : 
  \ell=1,\ldots,k\}$ as usual.  This is  
  the $j_M$th normalized coefficient from projecting $\theta$ onto
  \smash{$X_{A_{\ell_M}}$}, the active variables at step $\ell_M$.

\item Assume that the errors in \eqref{eq:model} are i.i.d.\
  $N(0,\sigma^2)$.  Then under the proper hypothesis, by summing up  
  the conditional property in \eqref{eq:cond_pivot} across partition 
  elements, we have 
\begin{equation}
\label{eq:pivot}
\P_{V^T \theta=U} \Big( \cT(Y;V,U) \leq t \Big) = t,
\end{equation}
for all $t \in [0,1]$. The assertion above holds for a parameter 
$\theta$ such that $V^T \theta = U$,
which we use as shorthand for $v_M^T \theta = \mu_M$ for all $M \in 
\cM$. 
Note that this full specification, across all $M \in \cM$, is critical
in order to apply the relevant null probability within each partition
element (giving rise to the equality in \eqref{eq:pivot}).

\item Therefore $\cT(Y;V,U)$ serves as a valid p-value (with exact 
  finite sample size)---but for testing what null hypothesis?
  Formally, it is attached to $H_0 : V^T \theta = U$, an exhaustive
  specification of $v_M^T \theta = \mu_M$, over all $M \in \cM$, but
  in truth, $\cT(Y;V,U)$ carries no information about models other
  than the selected one, \smash{$\hat{M}(Y)$}. For this reason, we  
  actually consider $\cT(Y;V,U)$ to be a p-value for the {\it random}  
  null hypothesis \smash{$H_0 : v_{\hat{M}(Y)}^T \theta =
    \mu_{\hat{M}(Y)}$}. This is made more precise through confidence
  intervals.  

\item A confidence interval is obtained by inverting the test in
  \eqref{eq:pivot}. But the TG statistic at $Y$,
\begin{equation*}
\cT(Y;V,U) =  
\sum_{M \in \cM} T(Y;M,v_M,\mu_M) 1_{\Pi_M}(Y) =  T\big(Y;\hat{M}(Y),
v_{\hat{M}(Y)}, \mu_{\hat{M}(Y)}\big), 
\end{equation*}
only depends on $U$ through \smash{$\mu_{\hat{M}(Y)}$}.  Thus, given a
desired confidence level $1-\alpha$, let us define
$D_\alpha$ to be the set of $U$ such that 
$\alpha/2 \leq \cT(Y;V,U) \leq 1-\alpha/2$, and $C_\alpha$ to be the set
of \smash{$\mu_{\hat{M}(Y)}$} such that
\smash{$\alpha/2 \leq T(Y;\hat{M}(Y),v_{\hat{M}(Y)},\mu_{\hat{M}(Y)}) \leq
1-\alpha/2$}.  Then we can see that
\begin{equation*}
U \in D_\alpha \iff \mu_{\hat{M}(Y)} \in C_\alpha,
\end{equation*}
so the confidence interval is effectively infinite with respect to 
the values $\mu_M$, \smash{$M \not= \hat{M}(Y)$}, and
inverting the test in \eqref{eq:pivot} yields
\begin{equation}
\label{eq:interval}
\P\Big( v_{\hat{M}(Y)}^T \theta \in C_\alpha\Big) = 1-\alpha.
\end{equation}
The above expression says that the random interval
$C_\alpha$ traps the random parameter \smash{$v_{\hat{M}(Y)}^T
  \theta$} with probability $1-\alpha$, and thus, this supports the  
interpretation of \smash{$H_0 : v_{\hat{M}(Y)}^T
  \theta = \mu_{\hat{M}(Y)}$} as the null hypothesis underlying
the unconditional TG statistic.  
\end{itemize}

\begin{remark}
The pivotal property in \eqref{eq:pivot} is derived under the
distributional assumption that $V^T \theta = U$, i.e., $v_M^T
\theta=\mu_M$ for all $M \in \cM$, which may seem unnatural, 
as the catalog $U$ of pivot value can be large (e.g., on the order of 
$d^k$ after $k$ steps of FS), and so this is condition on possibly many  
contrasts of $\theta$.  However, it is worth emphasizing that the 
unconditional testing property in \eqref{eq:pivot} is really only useful in that
it allows us to formulate the unconditional confidence interval property in  
\eqref{eq:interval}, which is a more natural statement about coverage of a
single (random) parameter. When viewing selective inference
from an unconditional perpsective, we find it more natural to place the focus
on confidence intervals rather than hypothesis testing; in many ways, we find
the former the more natural of the two perspectives, unconditionally.   
\citet{tibshirani2016exact} in fact suggest separate nomenclature for the
unconditional case, referring to the property in \eqref{eq:interval}
as that of a {\it selection interval} (rather than confidence interval), to
emphasize that this interval covers a moving target.    

\end{remark}

\section{The master statistic}
\label{sec:master}

Given a response $y$ and predictors $X$, our description thus far of
the selected model $\hat{M}(y)$, statistics $T(y;M,v,\mu)$ and 
$\cT(y;V,U)$, etc., has ignored the role of $X$.    
This was done for simplicity. The theory to come in Section 
\ref{sec:asymptotics} will consider $X$ to be nonrandom, but
asymptotically $X$ must (of course) grow with $n$, and so it will help 
to be precise about the dependence of the selected model and
statistics on $X$. We will denote these quantities by  
\smash{$\hat{M}(X,y)$}, \smash{$T(X,y;M,v,\mu)$}, and \smash{$\cT(X,y;
  V,U)$} to emphasize this dependence.  We define
\begin{equation*}
\Omega_n = \Big(\frac{1}{n}X^T X, \frac{1}{\sqrt{n}}X^T y\Big),
\end{equation*}
a $d(d+3)/2$-dimensional quantity that we will call
the {\it master statistic}.  As its name might suggest, this plays an 
important role: all normalized coefficients from regressing
$y$ onto subsets of the variables $X$ can be written in terms of
$\Omega_n$.  That is, 
for an arbitrary set $A \subseteq \{1,\ldots,p\}$, the
$j$th normalized coefficient from the regression of $y$ onto $X_A$ is 
\begin{equation*}
\frac{(e_j^T X_A^T X_A)^{-1} X_A^T y}{\sqrt{e_j^T (X_A^T X_A)^{-1} e_j}} =
\frac{e_j^T n(X_A^T X_A)^{-1} \frac{1}{\sqrt{n}} X_A^T y}{\sqrt{e_j^T
    n(X_A^T X_A)^{-1} e_j}}, 
\end{equation*}
which only depends on $(X,y)$ through $\Omega_n$. The same dependence 
is true, it turns out, for the selected models from FS, LAR, and the
lasso. We defer the proof of the next lemma, as with all proofs in
this paper, until the appendix. 

\begin{lemma}
\label{lem:master1}
For each of the FS, LAR, and lasso procedures, run for $k$ steps on 
data $(X,y)$, the selected model \smash{$\hat{M}(X,y)$} only
depends on $(X,y)$ through 
\smash{$\Omega_n=(\frac{1}{n}X^T X, \frac{1}{\sqrt{n}}X^T y)$}, 
the master statistic.

In more detail, for any fixed $M \in \cM$, the matrix $Q_M(X)$ 
such that
\smash{$\hat{M}(X,y)=M \iff Q_M(X) \, y \geq 0$} can be written
as \smash{$Q_M(X) = P_M(\frac{1}{n} X^T X) \, \frac{1}{\sqrt{n}}
   X^T$},  where $P_M$ depends only on 
\smash{$\frac{1}{n} X^T  X$}. Hence     
\begin{equation*} 
\hat{M}(X,y)=M \iff P_M\Big(\frac{1}{n}X^T X\Big) \, 
\frac{1}{\sqrt{n}} X^T y \geq 0.
\end{equation*}
\end{lemma}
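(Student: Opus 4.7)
The plan is to prove the more detailed second claim; the first then follows at once. From the construction in Section 2.1 and Appendix A, $\Pi_M = \{y : Q_M(X) \cdot y \ge 0\}$ is a convex cone cut out by finitely many inequalities, each arising from a single decision made during the $k$ steps of the algorithm: either (i) a comparison of absolute inner products $|X_j^T r_\ell(y)|$ between a variable entering the model and one not yet in it, (ii) a sign condition on the coefficient of an entering or leaving variable, or (iii) for LAR/lasso, a comparison of knot values, together with the auxiliary information encoded in $\hat{I}_\ell(y)$. I would proceed by induction on $\ell = 1,\ldots,k$ to show that each such inequality can be written as $c^T X^T y \ge 0$, where $c \in \R^d$ has entries that are rational functions of the entries of $X^T X$ only.

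The inductive step is a direct calculation. Given the active set $A$ and sign pattern after step $\ell-1$, the residual is $r_\ell(y) = (I - X_A(X_A^T X_A)^{-1} X_A^T)\, y$, so
\[
X_j^T r_\ell(y) \;=\; X_j^T y - (X_j^T X_A)(X_A^T X_A)^{-1} X_A^T y \;=\; g_j(X^T X)^T \cdot X^T y,
\]
where $g_j(X^T X) \in \R^d$ depends only on $X^T X$; likewise the normalized least-squares coefficients $(X_A^T X_A)^{-1} X_A^T y$ that govern sign conditions, and the ratios determining LAR knots, are linear in $X^T y$ with coefficients depending only on $X^T X$. Hence each of the three decision types reduces to an inequality of the form $c^T X^T y \ge 0$. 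Because $\Pi_M$ is a cone through the origin, no nonzero constant term can appear, so the right-hand side is exactly $0$ in every case.

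Stacking rows gives $Q_M(X) = R_M(X^T X) \cdot X^T$ for some matrix-valued rational function $R_M$. To put this in the claimed form, I would rewrite $X^T X = n \cdot (\tfrac{1}{n} X^T X)$ and $X^T = \sqrt{n}\cdot(\tfrac{1}{\sqrt{n}} X^T)$ and absorb the $n$-dependent scalar factors into a new function $P_M$ of $S = \tfrac{1}{n} X^T X$ alone (this is well-defined precisely because $R_M$ is rational). This yields $Q_M(X) = P_M(\tfrac{1}{n}X^T X) \cdot \tfrac{1}{\sqrt{n}} X^T$, and the first statement follows because $\{\hat{M}(X,y) = M\} = \{P_M(\tfrac{1}{n}X^T X)\cdot \tfrac{1}{\sqrt{n}} X^T y \ge 0\}$ is manifestly a function of $\Omega_n$ alone.

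The main obstacle is carrying out the inductive hypothesis cleanly for LAR and lasso, where the step-$\ell$ decision involves computing knot values as minima of ratios of signed linear functionals of $y$, and the extra bookkeeping $\hat{I}_\ell$ must be tracked. I would avoid redoing this case analysis from scratch by appealing to the derivation in \citet{exactlar}, who already express the relevant conditions as finite intersections of homogeneous linear inequalities in $y$ whose coefficients involve $X$ only through sub-Gram matrices and their inverses—that is, only through $X^T X$. Once this is in hand, the algebraic manipulations above apply uniformly to FS, LAR, and lasso.
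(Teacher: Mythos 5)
Your proposal is correct and follows essentially the same route as the paper's proof: both carry out an inductive, step-by-step verification that the inequalities cutting out $\Pi_M$ are linear in $X^T y$ with coefficients that are functions of $X^T X$ (rational combinations of sub-Gram matrices and their inverses), and both defer the LAR/lasso bookkeeping to the explicit derivations in \citet{exactlar}. The only soft spot is the parenthetical justification of the final rescaling step ``absorb the $n$-dependent scalar factors into $P_M$\ldots well-defined precisely because $R_M$ is rational'': rationality alone does not guarantee $\sqrt{n}\,R_M(n S)$ can be written as a function of $S = \tfrac{1}{n}X^T X$ independent of $n$ (e.g., $R_M(T)=T+T^{-1}$ fails). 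What actually makes the absorption work is that each row of $Q_M(X)$ you derive is degree-homogeneous in $X^T X$ (degree $-1$ for the normalized comparisons, degree $0$ for the unnormalized residual correlations), so multiplying that row by the appropriate power of $n$---a positive scalar that leaves the $\geq 0$ inequality unchanged---yields a coefficient depending only on $\tfrac{1}{n}X^TX$; the paper sidesteps this by multiplying through by $\sqrt{n}$ and exhibiting the $n$-factors explicitly. Your explicit computation does establish the needed homogeneity, so the conclusion stands; only the stated reason is off.
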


This lemma asserts that the master statistic governs model selection, 
as performed by FS, LAR, and the lasso. It is also central to TG pivot
for these procedures. Denoting  
\smash{$M=\hat{M}(X,y)$}, the statistic $T(X,y;M,v,\mu)$ in
\eqref{eq:cond_pivot_defn} only depends on $(X,y)$ through three
quantities: 
\begin{equation*}
\frac{v^T y}{\|v\|_2}, 
\;\;\; \frac{Q_M(X) \, v}{\|v\|_2}, 
\;\;\;\text{and}\;\;\;
Q_M(X) \, y.
\end{equation*}
The third quantity is always a function of $\Omega_n$, by Lemma
\ref{lem:master1}.  When $v$ is chosen so that $v^T y$  
is a normalized coefficient in the regression of $y$ onto a
subset of the variables in $X$, the first two quantities are also 
functions of $\Omega_n$.  Thus, in this case, the TG pivot only 
depends on $(X,y)$ through the master statistic $\Omega_n$; in fact, it is
continuous at any point such that \smash{$\frac{1}{n} X^T X$} is nonsingular and
$y$ does not lie on the boundary of a model selection event. 

\begin{lemma}
\label{lem:master2}
Fix any model $M \in \cM$, and suppose that $v$ is chosen so  
that $v^T y$ is a normalized coefficient from projecting 
$y$ onto a subset of the variables in $X$.  Then the TG statistic 
only depends on $(X,y)$ by means of $\Omega_n$, so that we may   
write 
\begin{equation*}
T(X,y;M,v,\mu) = \psi_M \Big(\frac{1}{n}X^T X, 
\frac{1} {\sqrt{n}}X^T y\Big).   
\end{equation*}
Further, the function $\psi_M$ is continous at any point $(S,z)$ such that $S$ 
is nonsingular and $P_M(S) \, z > 0$.
\end{lemma}

Finally, we show that the conditional pivotal property of the TG 
statistic in \eqref{eq:cond_pivot} can be phrased entirely in terms of the
master statistic.

\begin{lemma}
\label{lem:master3}
Assume the conditions of Lemma \ref{lem:master2}, and additionally that $Y$ 
is drawn from \eqref{eq:model}. Construct the master statistic
\smash{$\Omega_n=(\frac{1}{n}X^T X,\frac{1}{\sqrt{n}} X^T Y)$}.  Then there is a
function $g$ such that
\begin{equation*}
v^T \theta 
= g(\E(\Omega_n)). 
\end{equation*}
Thus if the errors in \eqref{eq:model} are i.i.d.\
$N(0,\sigma^2)$, then the conditional pivotal property 
\eqref{eq:cond_pivot} of the TG statistic can be reexpressed as
\begin{equation*}
\P_{g(\E(\Omega_n))=\mu} \Big( \psi_M(\Omega_n) \leq t  
  \, \Big| \, \hat{M}(X,Y)=M \Big) = t,
\end{equation*}
for all $t \in [0,1]$.  
\end{lemma}

Equipped with the last two lemmas, asymptotic 
theory for the TG test, when $d$ is fixed, is not far off.   
Under weak conditions on the data model in \eqref{eq:model},  
the central limit theorem tells us that
\smash{$\frac{1}{\sqrt{n}} X^T Y$} converges weakly to a
normal random variable. With \smash{$\frac{1}{n}X^T X$} converging to
a deterministic matrix, the continuous mapping theorem 
will then provide the appropriate asymptotic limit for the 
statistic \smash{$T(X,y;M,v,\mu)=\psi_M(\frac{1}{n}X^T X, 
\frac{1}{\sqrt{n}}X^T Y)$}.   
This is made more precise next.

\section{Asymptotic theory}
\label{sec:asymptotics}

Here we treat the dimension $d$ as fixed, and consider the limiting
distribution of the TG statistic as $n\to \infty$.  (See Section
\ref{sec:highdim} for the case when $d$ grows.)
Throughout, the matrix $X \in \R^{n\times d}$ will be
treated as nonrandom, and we consider a sequence of predictor matrices
satisfying two conditions:  
\begin{equation}
\label{eq:xcov}
\lim_{n\to\infty} \; \frac{1}{n} X^T X = \Sigma,
\end{equation}
for a nonsingular matrix $\Sigma \in \R^{d\times d}$, and
\begin{equation}
\label{eq:xnorm}
\lim_{n\to\infty} \; \max_{i=1,\ldots,n} \; \frac{\|x_i\|_2}{\sqrt{n}}
= 0,
\end{equation}
where $x_i \in \R^d$, $i=1,\ldots,n$ denote the rows of $X$.
These are not strong conditions.
  
\subsection{A nonparametric family of distributions}
\label{sec:dists}

We specify the class of distributions that we will be working with for
$Y$ in \eqref{eq:model}. Let $\sigma^2>0$ be a fixed, known constant.
First we define a set of error distributions    
\begin{equation*}
\cE = \Bigg\{F \;:\;
\int x dF(x)=0, \; 
\int x^2 dF(x)=\sigma^2 \Bigg\}.
\end{equation*}
The first moment condition in the above definition is needed to make
the model identifiable, and the second condition is used for simplicity.
Aside from these moment conditions, the class $\cE$ contains 
a small neighborhood (say, as measured in the total variation metric)
around essentially every element. Thus, modulo the moment
assumptions, $\cE$ is strongly nonparametric in the sense of 
\citet{donoho1988onesided}. Given $\mu \in \R$, let $F_\mu$
denote the distribution of $\mu+ \delta$, where $\delta \sim F$, and
given $\theta=(\theta_1,\ldots,\theta_n) \in \R^n$, let 
\smash{$F_n(\theta)=F_{\theta_1} \times \ldots,\times 
  F_{\theta_n}$}.  Now we define a class of distributions  
\begin{equation}
\label{eq:pn}
\cP_n(\theta)= \Bigg\{ F_n(\theta) =
F_{\theta_1} \times \ldots \times F_{\theta_n} \;:\;  
F \in \cE \Bigg\}.
\end{equation}
In words, assigning a distribution $Y \sim F_n(\theta)$ means that
$Y$ in drawn from the model \eqref{eq:model}, with mean $\theta
\in \R^n$, and errors $\epsilon_1,\ldots,\epsilon_n$ i.i.d.\ from an 
arbitrary centered distribution $F$ with variance $\sigma^2$.  

As $n$ grows, we allow the underlying mean $\theta$ to change, but we 
place a restriction on this parameter so that it has an appropriate
asymptotic limit.  Specifically, we consider a class $\Theta$ of
sequences of mean parameters such that \smash{$\frac{1}{\sqrt{n}} X^T \theta$}
has an asymptotic limit lying in some compact set, with uniform
convergence to this limit.  Formally, write (in a slight abuse of notation)
$\theta \in \Theta$ to denote a sequence of mean parameters in $\Theta$, and
let  $E(\Theta)$ denote the set of limit points of 
\smash{$\{\frac{1}{\sqrt{n}} X^T \theta : \theta \in \Theta\}$}. 
Then, for some constant $B>0$, we require of the class $\Theta$,
\begin{equation}
\label{eq:theta}
E(\Theta) \subseteq [-B,B]^d, \;\;\;\text{and}\;\;\;
\lim_{n\to\infty} \; \sup_{\eta \in E(\Theta)} \; 
\sup_{\frac{1}{\sqrt{n}} X^T \theta \to \eta} \; 
\Bigg|\frac{1}{\sqrt{n}} X^T \theta - \eta \Bigg| = 0.
\end{equation}
We emphasize once again that $\theta\in\R^n$ and $X \in \R^{n\times p}$ will
both vary with $n$, i.e., we can think of $\theta$ and the columns of $X$ as
triangular arrays, but our notation suppresses this dependence for simplicity. 

\subsection{Uniform convergence results}

We begin with a result on the uniform convergence of (the random part
of) the master statistic to a normal distribution, both marginally and
conditionally.   

\begin{lemma}
\label{lem:master}
Assume that $X$ has asymptotic covariance matrix $\Sigma$, as in
\eqref{eq:xcov}, and satisfies the normalization condition
in \eqref{eq:xnorm}. Let $Y \sim F_n(\theta) \in \cP_n(\theta)$, this
class as defined in \eqref{eq:pn}, for a sequence of mean parameters 
$\theta \in \Theta$, as defined in \eqref{eq:theta}.  Denote
\smash{$\frac{1}{\sqrt{n}} X^T \theta \to \eta$} as $n \to \infty$.  Then   
\smash{$Z_n=\frac{1}{\sqrt{n}} X^T Y$}
converges in distribution to $Z \sim N(\eta,\sigma^2\Sigma)$, 
uniformly over $\cP_n(\theta)$, and uniformly over all $\theta \in \Theta$.
That is,   
\begin{equation*}
\lim_{n\to\infty} \; \sup_{\theta \in \Theta} \;
\sup_{F_n(\theta) \in \cP_n(\theta)} \; \sup_{x \in \R^d} \;
\big|\P(Z_n \leq x) - \P(Z \leq x)\big| = 0.
\end{equation*}
Further, given a sequence of matrices $A_n \in \R^{q \times d}$,
$n=1,2,3,\ldots$ with $A_n \to A$ as $n \to \infty$, such that the set
\smash{$\{z : Az \geq 0\}$} has nonempty interior,  
\smash{$Z_n \,|\, A_nZ_n
  \geq 0$} converges in distribution to $Z \,|\, AZ \geq 0$, uniformly over 
$\cP_n(\theta)$, and uniformly over all $\theta \in \Theta$.
\end{lemma}

This lemma, combined with Lemmas \ref{lem:master2} and \ref{lem:master3} of the
last section, leads us to uniform asymptotic theory for the TG
test. We remind the reader that $k$, the number of steps, is to be
considered fixed in the next result (as it is throughout the paper).   

\begin{theorem}
\label{thm:pivot}
Assume the conditions of Lemma \ref{lem:master}.  Suppose FS, LAR, or    
the lasso is run for $k$ steps on $(X,Y)$. Below we describe the conditional 
and unconditional asymptotic results separately.

\smallskip\smallskip
\noindent
(a, Markovic) Fix any model $M \in \cM$. Let $v$ be a vector such that $v^T     
\theta$ gives a normalized coefficient in the projection of
$\theta$ onto some subset of the variables in $X$, and let $\mu$ be an 
arbitrary pivot value.  Then  
under $v^T \theta=\mu$, the conditional TG statistic 
\smash{$T(X,Y;M,v,\mu) \,|\, \hat{M}(X,Y)=M$} converges in distribution to 
$W \sim U(0,1)$, uniformly over $\cP_n(\theta)$, and over $\theta \in
\Theta$. That is,             
\begin{equation*}
\lim_{n \to \infty} \;
\sup_{\theta \in \Theta} \;
\sup_{F_n(\theta) \in \cP_n(\theta)} \;
\sup_{t \in [0,1]} \;
\Bigg|\P_{v^T \theta=\mu} \Big(T(X,Y;M,v,\mu) \leq t 
\,\Big|\, \hat{M}(X,Y)=M \Big) - t\Bigg| = 0.   
\end{equation*}
Moreover, if we define $C_{n,\alpha}$ to be the set
of $\mu$ such that \smash{$\alpha/2 \leq
  T(X,Y;M,v,\mu) \leq 1-\alpha/2$},
then $C_{n,\alpha}$ is an asymptotically uniformly valid confidence 
interval for \smash{$v^T \theta$}.  That is,
\begin{equation*}
\lim_{n \to \infty} \;
\sup_{\theta \in \Theta} \;
\sup_{F_n(\theta) \in \cP_n(\theta)} \;
 \sup_{\alpha \in [0,1]} \;
\Bigg|\P_{v^T \theta=\mu} \Big(v^T \theta \in C_{n,\alpha} 
\,\Big|\, \hat{M}(X,Y)=M \Big) - (1-\alpha) \Bigg| = 0. 
\end{equation*}

\smallskip\smallskip
\noindent
(b) Let $V=\{v_M : M \in \cM\}$ 
be a catalog of vectors such that each $v_M^T \theta$ yields a
normalized coefficient in the projection of $\theta$ onto a subset 
of the variables in $X$, for $M \in \cM$, and 
$U=\{\mu_M : M \in \cM\}$ be a catalog of pivot
values.  Then under $V^T \theta = U$, the same results as in part (a) hold
marginally.  That is,  
\begin{equation*}
\lim_{n \to \infty} \;
\sup_{\theta \in \Theta} \;
\sup_{F_n(\theta) \in \cP_n(\theta)} \;
\sup_{t \in [0,1]} \;
\Big|\P_{V^T \theta=U} \Big(\cT(X,Y;V,U) \leq t\Big) - t\Big| = 0.  
\end{equation*}
and for $C_{n,\alpha}$ defined to be the set
of $\mu$ such that \smash{$\alpha/2 \leq
  T(X,Y;\hat{M}(X,Y),v_{\hat{M}(X,Y)},\mu) \leq 1-\alpha/2$},
\begin{equation*}
\lim_{n \to \infty} \;
\sup_{\theta \in \Theta} \;
\sup_{F_n(\theta) \in \cP_n(\theta)} \;
 \sup_{\alpha \in [0,1]} \;
\Big|\P\Big(v_{\hat{M}(X,Y)}^T \theta \in C_{n,\alpha}\Big) - (1-\alpha)  
\Big| = 0.
\end{equation*}
\end{theorem}

\begin{remark}
An initial version of this work contained only the unconditional result in part
(b) of the theorem.  Jelena Markovic pointed out that the conditional
result in part (a) should also be possible, and thus this conditional
result should also be attributed to her. Between the initial and the
current version of this paper, in addition to revising Theorem \ref{thm:pivot},
we have also revised Theorems \ref{thm:unknown} and \ref{thm:highdim} to include
the appropriate conditional results.  
\end{remark}

\section{Unknown $\sigma^2$ and the bootstrap}
\label{sec:unknown}

The results of the previous section assumed that the error variance 
$\sigma^2$ in the model \eqref{eq:model} was known.  Here we consider
two strategies when $\sigma^2$ is unknown.  The first plugs a
(rather naive) estimate of $\sigma^2$ into the usual TG
statistic. The second is a computationally efficient bootstrap
method. Both, as we will show, yield asymptotically conservative
p-values.   (In practice, the bootstrap often gives shorter
confidence intervals than those based on the TG pivot; see
Section \ref{sec:examples}.)

\subsection{A simple plug-in approach}
\label{sec:plugin}

Given a model $M \in \cM$, contrast vector $v$, and pivot value $\mu$,
consider the TG statistic $T(X,Y;M,v,\mu)$. Let us abbreviate 
\begin{equation*}
\hat{a}_M=a(X,Y;M,v),
\;\;\;\text{and} \;\;\;
\hat{b}_M=b(X,Y;M,v),
\end{equation*}
where the latter two functions are as defined in Section
\ref{sec:pivot}.  In this notation, we can succintly
write the TG statistic as
\begin{equation}
\label{eq:pivot_succint}
T(X,Y;M,v,\mu) = \frac
{\displaystyle
\Phi\Bigg(\frac{\hat{b}_M-\mu}{\sigma\|v\|_2}\Bigg)-  
\Phi\Bigg(\frac{v^T Y-\mu}{\sigma\|v\|_2}\Bigg)} 
{\displaystyle
\Phi\Bigg(\frac{\hat{b}_M-\mu}{\sigma\|v\|_2}\Bigg)-
\Phi\Bigg(\frac{\hat{a}_M-\mu}{\sigma\|v\|_2}\Bigg)}.
\end{equation}
When $\sigma^2$ is unknown, we propose a simple plug-in approach that 
replaces $\sigma$ with $cs_Y$, where
\begin{equation*}
s_Y^2 = \frac{1}{n}\sum_{i=1}^n |Y_i-\bar{Y}|^2, 
\end{equation*}
the sample variance of $Y$ (here \smash{$\bar{Y}=\sum_{i=1}^n
  Y_i/n$} denotes the sample mean), and $c>1$ is a fixed constant.
To be explicit, we consider the modified TG statistic
\begin{equation}
\label{eq:pivot_plugin}
\tilde{T}(X,Y;M,v,\mu) = \frac
{\displaystyle
\Phi\Bigg(\frac{\hat{b}_M-\mu}{cs_Y\|v\|_2}\Bigg)-  
\Phi\Bigg(\frac{v^T Y-\mu}{cs_Y\|v\|_2}\Bigg)} 
{\displaystyle
\Phi\Bigg(\frac{\hat{b}_M-\mu}{cs_Y\|v\|_2}\Bigg)-
\Phi\Bigg(\frac{\hat{a}_M-\mu}{cs_Y\|v\|_2}\Bigg)}.  
\end{equation}
The scaling factor $c$ facilitates our theoretical study of the above 
plug-in statistic, and practically, we have found that ignoring it
(i.e., setting $c=1$) works perfectly well, though a choice of, say,
$c=1.0001$ seems to have a minor effect anyway.  

When the mean $\theta$ of $Y$ is nonzero, the sample variance $s_Y^2$
is generally too large as an estimate of $\sigma^2$. As we will show, 
the modified statistic in \eqref{eq:pivot_plugin} thus yields
asymptotically conservative p-values.  Residual based estimates of
$\sigma^2$ are not as useful in our setting because they depend
more heavily on the linearity of the underlying regression model,
and they suffer practically when $d$ is close to $n$ (see also the 
discussion at the start of Section \ref{sec:examples}).

\subsection{An efficient bootstrap approach}
\label{sec:bootstrap}

As an alternative to the plug-in method of the last subsection, we
investigate a highly efficient bootstrap scheme that does not rely on
knowledge of $\sigma^2$. Our general framework so far treats $X$ as
fixed, and for our bootstrap strategy to respect this assumption, we
cannot use, say, the pairs bootstrap, and must perform sampling with
respect to $Y$ only.  The residual bootstrap is ruled out since we
do not assume that the mean $\theta$ follows a linear model in
$X$.  This leaves us to consider simple bootstrap sampling of the
components of $Y$.  This is somewhat nonstandard, as the
components of $Y$ in \eqref{eq:model} are not i.i.d., but it provides
a mechanism for provably conservative asymptotic inference, and it is 
what makes our approach so computationally efficient.


Given $Y=(Y_1,\ldots,Y_n)$ drawn from the model in \eqref{eq:model},
let $Y^*=(Y_1^*,\ldots,Y_n^*)$ denote a bootstrap sample of $Y$.  We
will denote by $\P_*$ the conditional distribution of $Y^*$ on $Y$,
and $\E_*$ the associated expectation operator.  That is, $\P_*(Y^*
\in A)$ is shorthand for $\P(Y^* \in A|Y)$, and similarly for $\E_*$.   
Using the notation of the last subsection (notation for
\smash{$\hat{a}_M,\hat{b}_M$}), and assuming without a loss
of generality that \smash{$\|v\|_2=1$}, let us motivate our 
bootstrap proposal by expressing the TG statistic as
\begin{equation*}
T(X,Y;M,v,\mu) = \P\Big(Z_{\mu,\sigma^2} \geq v^T Y \, \Big|  
\, \hat{a}_M \leq Z_{\mu,\sigma^2} \leq \hat{b}_M, \, Y \Big), 
\end{equation*}
where the probability on the right-hand side is taken
with $Y$ (and thus \smash{$\hat{a}_M,\hat{b}_M$}) 
treated as fixed, and with \smash{$Z_{\mu,\sigma^2}$} denoting a 
\smash{$N(\mu,\sigma^2)$} random variable.  The main idea is now
to approximate the truncated normal distribution underlying 
the TG statistic with an appropriate one from bootstrap samples,
\begin{equation*}
\P\Big(Z_{\mu,\sigma^2} \geq v^T Y \, \Big| 
\, \hat{a}_M \leq Z_{\mu,\sigma^2}  \leq \hat{b}_M, \, Y \Big) \approx 
\P_*\Big( v^T (Y^* - \bar{Y} \mathbb{1}) + \mu \geq 
v^T Y  \, \Big| \,  \hat{a}_M \leq v^T (Y^* - \bar{Y}
\mathbb{1}) + \mu \leq \hat{b}_M \Big).     
\end{equation*}
Recall \smash{$\bar{Y}=\sum_{i=1}^n Y_i/n$} is the sample mean of $Y$,
so $\E_*(v^T Y^*)= v^T (\bar{Y} \mathbb{1})$ (with
$\mathbb{1} \in \R^n$ denoting the vector of all 1s), and we 
have shifted \smash{$v^T Y^*$} so that the resulting quantity
\smash{$v^T (Y^* - \bar{Y} \mathbb{1}) + \mu$} mimics
a normal variable with mean $\mu$.
The right-hand side above very nearly defines our bootstrap version of
the TG statistic, except that for technical reasons, we must make
two small modifications.  In particular, we define the bootstrap TG  
statistic as      
\begin{equation}
\label{eq:pivot_boot}
T^*(X,Y;M,v,\mu) = 
\frac{\P_*\big(v^T Y \leq c v^T 
(Y^* - \bar{Y} \mathbb{1}) + \mu  
  \leq \hat{b}_M\big) + \delta_n}   
{\P_*\big(\hat{a}_M \leq c v^T 
(Y^* - \bar{Y} \mathbb{1}) + \mu 
  \leq \hat{b}_M\big) + \delta_n},
\end{equation}
where $c>1$ is a constant as before, and $\delta_n=\gamma
n^{-1/4}$ for a small constant $\gamma>0$.  Again, we have
found that ignoring the scaling factor $c$ (i.e., setting $c=1$) works
just fine in practice, though a choice like $c=1.0001$ does not cause
major differences anyway. On the contrary, a
nonzero choice of the padding factor like 
$\delta_n=10^{-4} n^{-1/4}$ 
does play an important practical role, since the bootstrap 
probabilities in the numerator and denominator in \eqref{eq:pivot_boot}   
can sometimes be zero.

Lastly, it is worth emphasizing that practical estimation of
the bootstrap probabilities appearing in \eqref{eq:pivot_boot} is quite 
an easy computational task, because the regression procedure in
question, be it FS, LAR, or the lasso, need not be rerun beyond its
initial run on the observed $Y$.  After this initial run, we
can just save the realized quantities \smash{$\hat{a}_M,\hat{b}_M$}, and then 
draw, say, $B=1000$ bootstrap samples $Y^*$ in order to estimate the  
probabilities in \eqref{eq:pivot_boot}.  This is not at all
computationally expensive.  Moreover, to estimate
\eqref{eq:pivot_boot} over multiple trial values of $\mu$ (so 
that we can invert these bootstrap p-values for a
bootstrap confidence interval), only a single common set of bootstrap 
samples is needed, since we can just shift \smash{$v^T Y^*$}
appropriately for each bootstrap sample $Y^*$. 

\subsection{Asymptotic theory for unknown $\sigma^2$}
\label{sec:asymp_sigma}

Treating the dimension $d$ as fixed, we will assume the
previous limiting conditions \eqref{eq:xcov},
\eqref{eq:xnorm} on the matrix $X$, and additionally, that 
\begin{equation}
\label{eq:xthree}
\frac{1}{n} \sum_{i=1}^n \|x_i\|_2^3 = O(1).
\end{equation}
Note that \eqref{eq:xcov} already implies that
\smash{$\frac{1}{n}\sum_{i=1}^n \|x_i\|_2^2 \to \tr(\Sigma)$}, and the
above is a little stronger, though it is still not a strong
condition by any means. For example, it is satisfied when
\smash{$\max_{i=1,\ldots,n} \|x_i\|_2 = O(1)$}.  These conditions on
$X$ imply important scaling properties for our usual choices of 
contrast vectors.

\begin{lemma}
\label{lem:vthree}
Assume that $X$ satisfies \eqref{eq:xcov}, \eqref{eq:xnorm},
\eqref{eq:xthree}.  If $v$ is any vector such that $v^T \theta$ gives a
normalized regression coefficient from projecting $\theta$ onto some subset of
the variables in $X$, then 
\begin{equation*}
\|v\|_3^3 = O\Big(\frac{1}{\sqrt{n}}\Big). 
\end{equation*}
\end{lemma}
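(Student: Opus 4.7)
The plan is to unpack the definition of each contrast vector and then estimate its $\ell_3$ norm entry by entry, using the assumptions on $X$.

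By hypothesis, each $v_M$ in the catalog has the form
\begin{equation*}
v_M = \frac{X_{A} (X_{A}^T X_{A})^{-1} e_j}{\sqrt{e_j^T (X_{A}^T X_{A})^{-1} e_j}},
\end{equation*}
for some subset $A \subseteq \{1,\ldots,d\}$ (depending on $M$) and some index $j$. Writing $x_{i,A} \in \R^{|A|}$ for the $i$th row of $X_{A}$, the $i$th entry of $v_M$ is
\begin{equation*}
v_{M,i} = \frac{x_{i,A}^T (X_{A}^T X_{A})^{-1} e_j}{\sqrt{e_j^T (X_{A}^T X_{A})^{-1} e_j}},
\end{equation*}
so by Cauchy--Schwarz and $\|x_{i,A}\|_2 \leq \|x_i\|_2$,
\begin{equation*}
|v_{M,i}|^3 \leq \|x_i\|_2^3 \cdot \frac{\|(X_{A}^T X_{A})^{-1} e_j\|_2^3}{(e_j^T (X_{A}^T X_{A})^{-1} e_j)^{3/2}}.
\end{equation*}

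The second factor is deterministic in $X$, and the main observation is that it is of order $n^{-3/2}$. Indeed, by assumption \eqref{eq:xcov}, $\frac{1}{n} X_A^T X_A \to \Sigma_{AA}$, and since $\Sigma$ is nonsingular, every principal submatrix $\Sigma_{AA}$ is positive definite. Consequently $n (X_A^T X_A)^{-1} \to \Sigma_{AA}^{-1}$, so both $\|(X_A^T X_A)^{-1} e_j\|_2 = \Theta(1/n)$ and $e_j^T (X_A^T X_A)^{-1} e_j = \Theta(1/n)$, and the whole ratio is $O(n^{-3/2})$, with a constant that depends only on $\Sigma_{AA}$.

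Summing over $i = 1,\ldots,n$ and applying assumption \eqref{eq:xthree},
\begin{equation*}
\|v_M\|_3^3 \leq \Bigg(\sum_{i=1}^n \|x_i\|_2^3\Bigg) \cdot O(n^{-3/2}) = O(n) \cdot O(n^{-3/2}) = O(n^{-1/2}).
\end{equation*}
Because $d$ and $k$ are fixed, $|\cM|$ is finite and there are only finitely many subsets $A$ and indices $j$ to consider, so the $O$-constant can be chosen uniformly over $M \in \cM$, giving the conclusion. The only mildly delicate point is ensuring that the bounds on $(X_A^T X_A)^{-1}$ hold uniformly over the finite collection of subsets arising in the catalog; this is immediate from nonsingularity of $\Sigma$ together with the fact that the collection is finite.
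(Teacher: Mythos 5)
Your proof is correct and follows essentially the same route as the paper's: expand $v_M$, bound each entry via Cauchy--Schwarz by $\|x_i\|_2$ times a factor involving $(X_A^T X_A)^{-1}$, use $\frac{1}{n}X_A^T X_A \to \Sigma_{AA}$ to show that factor is $O(n^{-3/2})$, and invoke \eqref{eq:xthree} for $\sum_i \|x_i\|_2^3 = O(n)$; the only cosmetic difference is that the paper pulls the factor $n$ inside the expressions to exhibit the $1/\sqrt{n}$ explicitly rather than tallying the orders at the end.
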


We specify assumptions on the distribution of $Y$ in 
\eqref{eq:model} that are similar to (but slightly stronger than) 
those in Section \ref{sec:dists}. For
constants $\sigma^2,\tau,\kappa > 0$, we define a set of error
distributions        
\begin{equation*}
\cE' = \Bigg\{F \;:\;
\int x dF(x)=0, \; 
\int x^2 dF(x)=\sigma^2, \;
\int x^3 dF(x) \leq \tau, \;
\int x^4 dF(x) \leq \kappa \Bigg\}.
\end{equation*}
We also define a class of distributions  
\begin{equation}
\label{eq:pn2}
\cP'_n(\theta)= \Bigg\{ F_n(\theta) =
F_{\theta_1} \times \ldots \times F_{\theta_n} \;:\;  
F \in \cE' \Bigg\}.
\end{equation}
where as before, $F_\mu$ denotes the
distribution of $\mu+\delta$, for $\delta \sim F$.  We define a class
$\Theta'$ of sequences of mean parameters that satisfies, as before, 
\begin{equation}
\label{eq:theta2a}
E(\Theta') \subseteq [-B,B]^d, \;\;\;\text{and}\;\;\;
\lim_{n\to\infty} \; \sup_{\eta \in E(\Theta')} \; 
\sup_{\frac{1}{\sqrt{n}} X^T \theta \to \eta} \; 
\Bigg|\frac{1}{\sqrt{n}} X^T \theta - \eta \Bigg| = 0,
\end{equation}
for a constant $B>0$, where recall $E(\Theta')$ denotes the set of limit points
in $\Theta'$; also, for each $\theta \in \Theta'$, at each $n$, we require 
\begin{equation}
\label{eq:theta2b}
s_\theta^2 = \frac{1}{n} \sum_{i=1}^n
|\theta_i-\bar\theta|^2 \leq S, \;\;\;\text{and}\;\;\;
r_\theta^3 = \frac{1}{n} \sum_{i=1}^n 
|\theta_i-\bar\theta|^3,
\end{equation}
for constants $S,R>0$, where \smash{$\bar\theta=\sum_{i=1}^n \theta_i/n$}. 
Note that the assumptions $Y \sim F_n(\theta)$, with $F_n(\theta) \in 
\cP'_n(\theta)$ and $\theta \in \Theta'$, are not much stronger than our
assumptions in Section \ref{sec:dists}: we require the existence of two
more moments for the error distribution, and place an additional weak
condition on the growth of (components of) $\theta$.  These conditions are
sufficient to prove the following helpful lemma. 

\begin{lemma}
\label{lem:rysy}
Assume that $X$ satisfies \eqref{eq:xcov}, \eqref{eq:xnorm}.
Let $Y \sim F_n(\theta) \in \cP'_n(\theta)$, where this class 
is as defined in \eqref{eq:pn2}, and let $\theta \in \Theta'$,  
where this class is as in \eqref{eq:theta2a}, \eqref{eq:theta2b}. Then for any
fixed $M \in \cM$, and $c>1$, 
\begin{equation*}
\lim_{n \to \infty} \;
\sup_{\theta \in \Theta'} \; \sup_{F_n(\theta) \in \cP'_n(\theta)} \; 
\P \Big(c s_Y \geq \sigma \,\Big|\, \hat{M}(X,Y)=M \Big) = 1.  
\end{equation*}
In words, the event $\{c s_Y \geq \sigma\}$ has probability 
tending to 1 conditional on \smash{$\hat{M}(X,Y)=M$}, uniformly over 
$\cP'_n(\theta)$, and over $\theta \in \Theta'$. Furthermore, denoting the
sample third moment of $Y$ as  
\begin{equation*}
r_Y^3 = \frac{1}{n}\sum_{i=1}^n |Y_i-\bar{Y}|^3,
\end{equation*}
we have that for any $\delta>0$, there exists $C>0$ such that for sufficiently 
large $n$, 
\begin{equation*}
\sup_{\theta \in \Theta'} \; \sup_{F_n(\theta) \in \cP'_n(\theta)} \; 
\P \Bigg(\frac{r_Y^3}{s_Y^3} \geq C \,\Bigg|\, \hat{M}(X,Y)=M
\Bigg) \leq \delta, 
\end{equation*}
In words, $r_Y^3/s_Y^3 = O_\P(1)$ 
conditional on \smash{$\hat{M}(X,Y)=M$}, 
uniformly over $\cP'_n(\theta)$, and over $\theta \in \Theta'$. 
\end{lemma}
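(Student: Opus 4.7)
The plan is to prove both statements via elementary moment calculations. For the first claim, I would decompose
\begin{equation*}
s_Y^2 = s_\theta^2 + 2 C_n + s_\epsilon^2, \qquad C_n := \frac{1}{n}\sum_{i=1}^n (\theta_i-\bar\theta)\epsilon_i,
\end{equation*}
using $Y_i - \bar Y = (\theta_i - \bar\theta) + (\epsilon_i - \bar\epsilon)$ together with the identity $\sum_i(\theta_i - \bar\theta) = 0$, which eliminates $\bar\epsilon$ from the cross term. The cross term has mean zero and variance $\sigma^2 s_\theta^2/n \leq \sigma^2 S/n$ by the definition of $\cB$, so Chebyshev's inequality yields $C_n \to 0$ in probability, uniformly over $\cP'_n(\theta)$ and $\theta \in \cB$. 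For the empirical variance of the errors, $\E s_\epsilon^2 = \sigma^2(n-1)/n$ and a short calculation using the fourth-moment bound $\kappa$ gives $\Var(s_\epsilon^2) = O(1/n)$ with a constant depending only on $\kappa$, so $s_\epsilon^2 \to \sigma^2$ in probability, uniformly. Since $s_\theta^2 \geq 0$, combining these gives $s_Y^2 \geq \sigma^2 - \delta_n$ for a random $\delta_n$ tending to zero in probability uniformly, so $\sigma - s_Y \leq \delta_n/\sigma$ on the event $\{\delta_n \leq \sigma^2\}$, which has probability approaching one uniformly. Hence $(1+\rho) s_Y \geq (1+\rho)\sigma - (1+\rho)\delta_n/\sigma \geq \sigma$ with probability approaching one, uniformly, yielding the first claim.

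For the second claim, I would apply the inequality $|a+b|^3 \leq 4(|a|^3 + |b|^3)$ to $Y_i - \bar Y$ and average, obtaining $r_Y^3 \leq 4(r_\theta^3 + r_\epsilon^3) \leq 4R + 4 r_\epsilon^3$. It thus suffices to show that $r_\epsilon^3 = O_\P(1)$ uniformly. By Minkowski in $L^3$, $\|\epsilon_i - \bar\epsilon\|_3 \leq \|\epsilon_i\|_3 + \|\bar\epsilon\|_3$; here $\|\epsilon_i\|_3 \leq \kappa^{1/4}$ by Jensen applied to the fourth-moment bound, while a direct moment expansion shows $\E\bar\epsilon^4 = O(n^{-2})$, so $\|\bar\epsilon\|_3 \leq \|\bar\epsilon\|_4 = O(n^{-1/2})$. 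Hence $\E r_\epsilon^3 = \E|\epsilon_1 - \bar\epsilon|^3$ is uniformly bounded by a constant depending only on $\kappa$, and Markov's inequality gives $r_\epsilon^3 = O_\P(1)$ uniformly. Combined with the first claim — which implies $s_Y^3 \geq \sigma^3/(1+\rho)^3$ with probability approaching one, uniformly — this yields $r_Y^3/s_Y^3 = O_\P(1)$ uniformly, completing the proof.

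The only real obstacle is that every estimate must be phrased in terms of the class constants $\sigma^2, \tau, \kappa, S, R$ rather than any particular $F$ or $\theta$, in order to obtain genuine uniformity. Since the whole argument rests on Chebyshev's and Markov's inequalities together with explicit moment identities, this uniformity is automatic once each step is written out; no uniform limit theorem is needed here.
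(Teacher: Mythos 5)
Your proof is correct, and it follows the same overall strategy as the paper (Chebyshev/Markov plus moment calculations) but with different tools at the intermediate steps. For the first claim, the paper computes $\E(s_Y^2)=\tfrac{n-1}{n}\sigma^2+s_\theta^2$ and applies Chebyshev once to $s_Y^2$, controlling $\Var(s_Y^2)$ by a coarse variance--sum inequality together with Rosenthal's inequality to handle $\E\bar\epsilon^4$; you instead decompose $s_Y^2 = s_\theta^2 + 2C_n + s_\epsilon^2$, exploit the cancellation $\sum_i(\theta_i-\bar\theta)\bar\epsilon=0$, and control the two error-dependent pieces separately, computing $\E\bar\epsilon^4$ exactly rather than via Rosenthal. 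For the second claim, the paper applies $|a+b|^t\leq 2^t(|a|^t+|b|^t)$ twice and Rosenthal again to bound $\E|\bar\epsilon|^3$; you split once with the sharper constant $4$, then bound $\E|\epsilon_1-\bar\epsilon|^3$ via Minkowski in $L^3$ and Lyapunov, deriving $\E|\epsilon_i|^3\leq\kappa^{3/4}$ from the fourth-moment bound rather than invoking $\tau$ directly. Your route avoids Rosenthal's inequality entirely, which makes it a bit more self-contained and elementary, at no cost in strength; the paper's route is slightly shorter to state because Rosenthal absorbs the moment bookkeeping. Both deliver bounds that depend only on $\sigma^2,\kappa,S,R$ (and, for the paper, $\tau$), so both give the required uniformity over $\cP'_n(\theta)$ and $\theta\in\cB$.
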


The last two lemmas allow us to tie the distribution function of our 
bootstrap contrast to that of a normal random variable.

\begin{lemma}
\label{lem:basic}
Assume that $X$ satisfies \eqref{eq:xcov}, \eqref{eq:xnorm},
\eqref{eq:xthree}. Let $Y \sim F_n(\theta) \in \cP'_n(\theta)$, as defined in
\eqref{eq:pn2}, and let $\theta \in \Theta'$, as defined in \eqref{eq:theta2a},
\eqref{eq:theta2b}. Let $M \in \cM$, and let $v$ be such that $v^T \theta$ gives
a normalized regression coefficient from projecting $\theta$ onto a subset of 
the variables in $X$. Then for any $\delta>0$, there exists $C>0$ such that  
sufficiently large $n$,
\begin{equation*}
\sup_{\theta \in \Theta'} \; \sup_{F_n(\theta) \in \cP'_n(\theta)} \;
\P \Bigg( \sup_{t \in \R} \; \big| \P_*\big(
v^T (Y^* - \bar{Y}\mathbb{1}) \leq t\big)     
- \P\big( s_Y  Z \leq t \, \big| \, Y \big)\big| \geq
\frac{C}{\sqrt{n}} \,\Bigg|\, \hat{M}(X,Y)=M \Bigg) \leq \delta, 
\end{equation*}
where we use $Z \sim N(0,1)$ for a standard normal random variate.   
In words, \smash{$\sup_{t \in \R} | 
\P_*(v^T (Y^* - \bar{Y}\mathbb{1}) \leq t)    
-{}$} \smash{$\P( s_Y  Z \leq t \, | \, Y )|=O_\P(1/\sqrt{n})$}   
conditional on \smash{$\hat{M}(X,Y)=M$}, uniformly
over $\cP'_n(\theta)$, and over $\theta \in \Theta'$. 
\end{lemma}

We are now ready to present uniform asymptotic results for the plug-in and 
bootstrap TG statistics. We remind the reader the number of steps $k$ is treated
as fixed below (as it is throughout).

\begin{theorem}
\label{thm:unknown}
Assume the conditions of Lemma \ref{lem:basic}.  Suppose FS, LAR, or    
the lasso is run for $k$ steps on $(X,Y)$.  Then under 
$v^T \theta = 0$, the conditional plug-in TG statistic 
\smash{$\tilde{T}(X,Y;M,v,0) \,|\, \hat{M}(X,Y)=M$} and conditional bootstrap
TG statistic \smash{$T^*(X,Y;M,v,0) \,|\, \hat{M}(X,Y)=M$} are each
asymptotically larger than $U(0,1)$ in distribution, uniformly over  
$\cP'_n(\theta)$, and over $\theta \in \Theta'$.  That is,  

\begin{equation*}
\lim_{n\to\infty} \; 
\sup_{\theta \in \Theta'} \; 
\sup_{F_n(\theta) \in \cP'_n(\theta)} \; \sup_{t \in [0,1]} \;  
\Big[\P_{v^T \theta = 0} \Big( \tilde{T}(X,Y; M,v, 0) \leq t
 \,\Big|\, \hat{M}(X,Y)=M \Big) - t \Big]_+ = 0, 
\end{equation*}
and  
\begin{equation*}
\lim_{n\to\infty} \; 
\sup_{\theta \in \Theta'} \; 
\sup_{F_n(\theta) \in \cP'_n(\theta)} \; \sup_{t \in [0,1]} \;  
\Big[\P_{v^T \theta = 0} \Big( T^*(X,Y; M,v, 0) \leq t
 \,\Big|\, \hat{M}(X,Y)=M \Big) - t \Big]_+ = 0, 
\end{equation*}
where $x_+=\max\{x,0\}$ denotes the positive part of $x$.  
Further, given any catalog $V=\{\mu_M : M \in \cM\}$ of vectors such that each 
$v_M^T \theta$ yields a normalized coefficient in the projection of $\theta$
onto a subset of the variables in $X$, for $M \in \cM$, the same results hold 
marginally under $V^T \theta = 0$.
\end{theorem}

\begin{remark} 
For simplicity, we analyzed the plug-in and 
bootstrap statistics simultaneously.  Consequently, the conditions   
assumed to prove asymptotic properties of the plug-in 
approach are stronger than what we would need if we were to study 
this method on its own, but there are not major differences in these 
conditions.
\end{remark}

Theorem \ref{thm:unknown} establishes that the plug-in and bootstrap 
versions of the TG statistic are asymptotically conservative when
viewed as p-values under $v^T \theta = 0$. If we look more broadly at  
the distribution of these test statistics under $v^T \theta =\mu$, for an
arbitrary value of $\mu$, then a technical barrier arises. 
For each statistic, our proof of its asymptotic conservativeness
leverages the fact that the truncated Gaussian survival function
decreases (in a pointwise sense), as its 
underlying variance parameter decreases.    
To extend these results to the case
of an arbitrary pivot value $\mu$, we would need the analogous 
fact to hold when we replace the survival function of the 
Gaussian variate $cs_Y Z + \mu$ truncated to
\smash{$[\hat{a}_M,\hat{b}_M]$}, with that of $\sigma Z + \mu$
tuncated to \smash{$[\hat{a}_M,\hat{b}_M]$}, on the event 
$\{c s_Y \geq \sigma\}$. 
Yet, without the guarantee that $\hat{a}_M \geq \mu$ (which 
clearly cannot always be true, for an arbitrary value of $\mu$),
it is no longer the case that decreasing the variance from 
\smash{$c^2 s_Y^2$} 
to $\sigma^2$ always decreases the survival functions of these
two truncated Gaussians; see Appendix \ref{app:trun_sigma}.  This  
means that confidence intervals given by directly inverting
either the plug-in or bootstrap TG statistic do not have provably correct 
asymptotic coverage properties, under the current analysis.

From the arguments in the proof of Theorem
\ref{thm:unknown}, we can construct one-sided
confidence intervals with conversative asymptotic coverage, by forcing
them to include \smash{$\hat{a}_M$}.  We do not pursue the details here, 
as we have found that these one-sided intervals are
practically too wide to be of interest.  

Importantly, the plug-in and bootstrap TG statistics often display
excellent empirical properties, as we will show in the next section. A
more refined analysis is needed to establish asymptotic uniformity for
the distribution of these statistics under $v^T \theta = \mu$.  Such asymptotic
uniformity, for arbitrary $\mu$, would lead to asymptotic coverage guarantees
for confidence intervals produced by inverting these statistics, and we leave
this extension to future work.

\section{Examples}
\label{sec:examples}

We present empirical examples that support the theory
developed in the previous sections, and also suggest that there is
much room to refine and expand our current set of results.
The first two subsections examine a
low-dimensional problem setting that is covered by our theory.
The last two look at substantial departures 
from this theoretical framework, the heteroskedastic and 
high-dimensional settings, respectively. In all 
examples, the LAR algorithm was used for variable selection and 
associated inferences; results with the FS and lasso paths were roughly
similar.  Also, in all examples, where not explicitly stated otherwise, the
computed p-values are a test of whether the target population value is 0. 

It may be worth discussing two potentially common reactions to
our experimental setups, especially for the low-dimensional problems 
described in the next subsections.  First, our plug-in
statistic uses $s_Y^2$ as an estimate for $\sigma^2$; why not use an 
estimate from the full least squares model of $Y$ on $X$, since this
would be less conservative?  While experiments (not shown) confirm
that this works in low-dimensional regression problems, such an
estimate becomes anti-conservative as the number of variables  
grows (particularly, irrelevant ones), and is obviously not applicable
in high-dimensional problems.  Therefore, we stick with the simple
estimate $s_Y^2$, as this is always applicable and always
conservative.  

Second, to determine variable significance in a
low-dimensional problem, one could of course fit a full regression
model and inspect the resulting p-values and confidence intervals.
These p-values and intervals could even be Bonferonni-adjusted to
account for selection.  Of course, this strategy would not be
possible for a high-dimensional problem, but if the number of
predictors is small enough, then it may work perfectly fine. 
So when should one use more complex tools for post-selection
inference? This is an important question, deserving of study, but it
is not the topic of this paper.  
The examples that follow are intended to portray
the robustness of the selective pivotal inference method against  
nonnormal error distributions;
they are not meant to represent the ideal statistical   
practice in any given scenario.

\subsection{P-value examples}
\label{sec:pval_ex}

We begin by studying a low-dimensional setting with $n=50$ and $d=10$.    
We defined predictors $X \in \R^{50 \times 10}$, by
drawing the columns independently according to the following mixture 
distribution: with equal probability, a column was filled with
i.i.d.\ entries from $N(0,1)$, $\mathrm{Bern}(0.5)$, or
$SN(0,1,5)$, where $SN(0,1,5)$ denotes
the skew normal distribution \citep{ohagan1976bayes} with  
shape parameter equal to 5. We then scaled the columns of $X$ to have
unit norm.   
The underlying mean was defined as $\theta = X\beta_0$, where
$\beta_0 \in \R^{10}$ has its first 2 components
equal to $-4$ and $4$, and the rest set to 0. Over 500 
repetitions, we drew a response $Y \in \R^{50}$ from
\eqref{eq:model}, with i.i.d.\ errors, and 4 different choices for the
error distribution: normal, Laplace, uniform, 
and skew normal. In each case, we centered the error distribution,
and we scaled it to have variance $\sigma^2=1$ (for the skew normal  
distribution, we used a shape parameter 5).  Every 10 repetitions, the
predictor matrix $X$ was regenerated according to the prescription
described above. 

Figure \ref{fig:lo_pval} displays QQ plots of p-values for testing the 
significance of the variable entered into the active model, across 3 
steps of LAR.  (The QQ plots compare the p-values to a standard
uniform distribution.) 
The p-values were computed using the TG statistic with
$\sigma^2=1$, the plug-in 
TG statistic with $s_Y^2$ as its estimate for $\sigma^2$, and the
bootstrap 
TG statistic with 50,000 bootstrap samples used to approximate the  
probabilities in the numerator and denominator of
\eqref{eq:pivot_boot}, and padding factor $\delta_n = 10^{-4}  
n^{-1/4}$.  (The scaling factor was ignored, i.e., set to $c=1$, 
for the plug-in and bootstrap statistics.) In steps 1 and 2, the
p-values are restricted to repetitions in which a correct variable 
selection was made---i.e., variable 1 or 2 was entered into the active
LAR model.  In step 3, the p-values are from repetitions in
which an incorrect variable selection was made---i.e., one of
variables 3 through 10 was entered into the active model. 
Since the underlying signal was fairly strong and the predictors
uncorrelated, such selections happened the majority of the time; 
specifically, the p-values displayed for steps 1, 2, and 3 comprise 
approximately 95\%, 85\%, and 87\% of the 500 repetitions,
respectively.  
The p-values in steps 1 and 2 show reasonable power, 
for all 3 statistics (TG, plug-in, and bootstrap types), and all 4
error distributions.  Also, the p-values in step 3 are uniform, as
desired, again for all statistics and all error distributions. Though
the guarantees (for uniform null p-values) are only asymptotic for the 
Laplace, uniform, and skew normal error distributions, such
asymptotic behavior appears to kick in quite early for these
distributions, as the sample size here is only $n=50$.  
Further, the QQ plots reveal that the p-values for the nonnormal
error  distributions are not really any farther from uniform than they
are in the normal case. This is somewhat remarkable, recalling that
the p-values are, by construction, {\it exactly} uniform under normal
errors.  

\begin{figure}[p]
\begin{subfigure}[b]{\textwidth}
\centering
Step 1, p-values \smallskip \\
\includegraphics[width=0.24\textwidth]{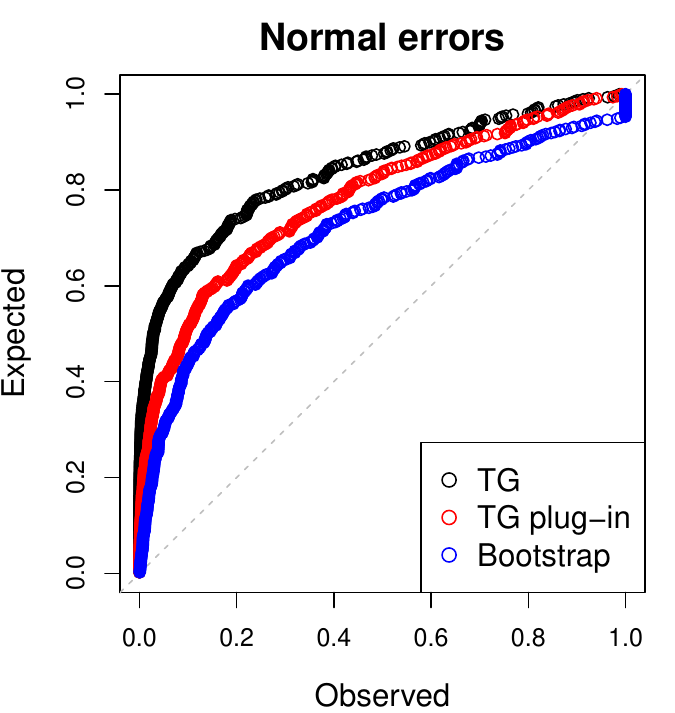}
\includegraphics[width=0.24\textwidth]{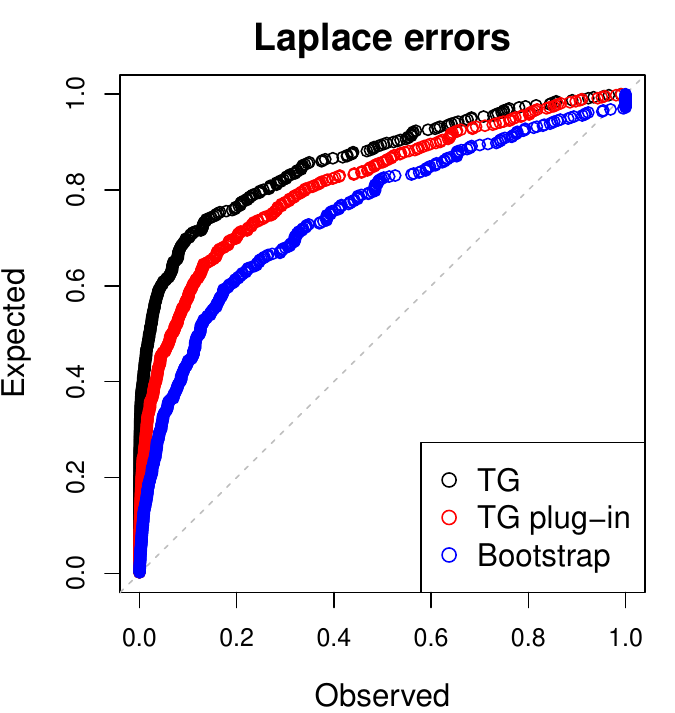}
\includegraphics[width=0.24\textwidth]{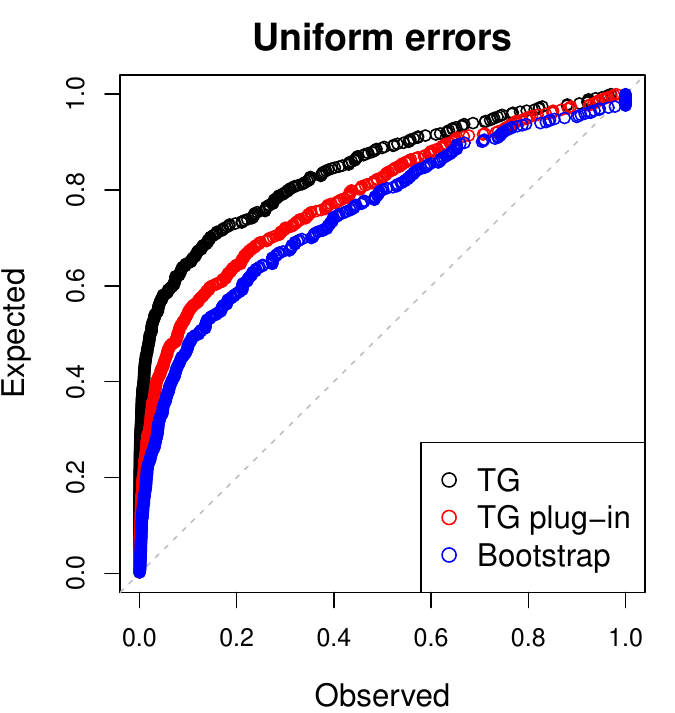}
\includegraphics[width=0.24\textwidth]{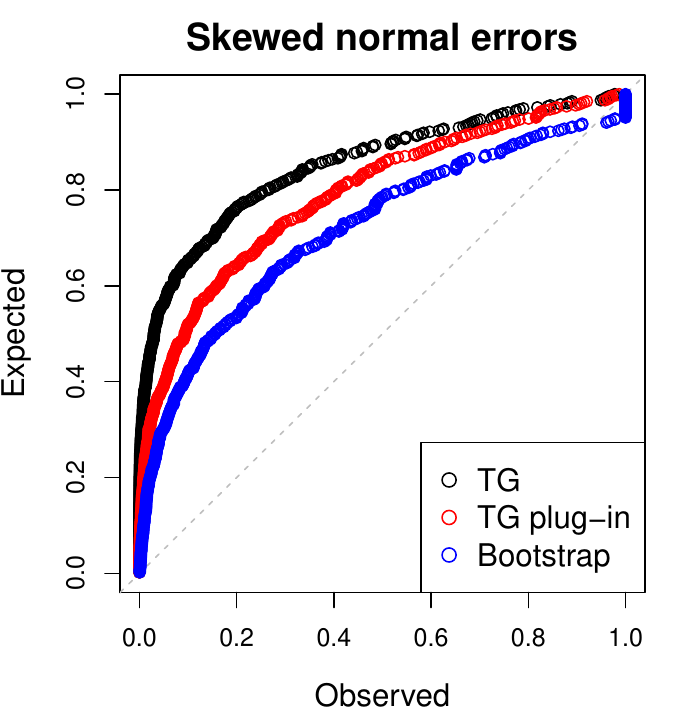} 
\smallskip \\
Step 2, p-values \smallskip \\
\includegraphics[width=0.24\textwidth]{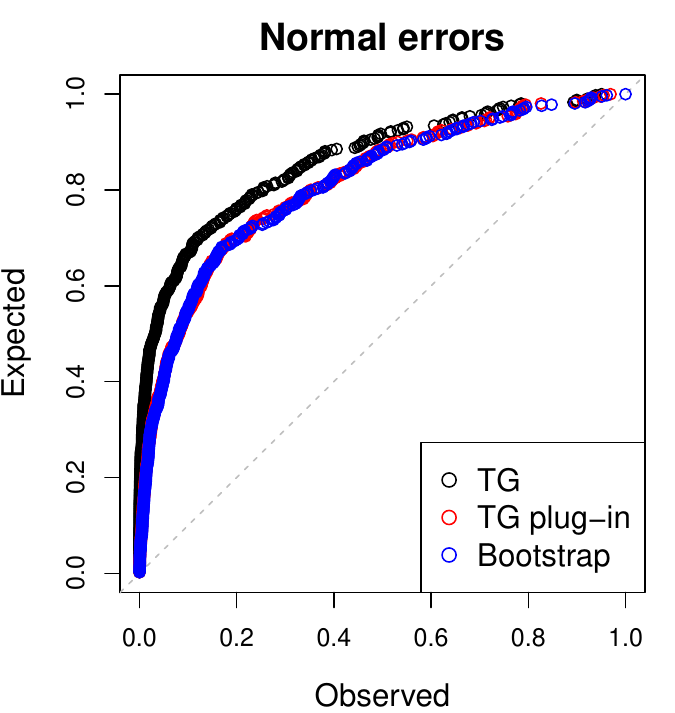}
\includegraphics[width=0.24\textwidth]{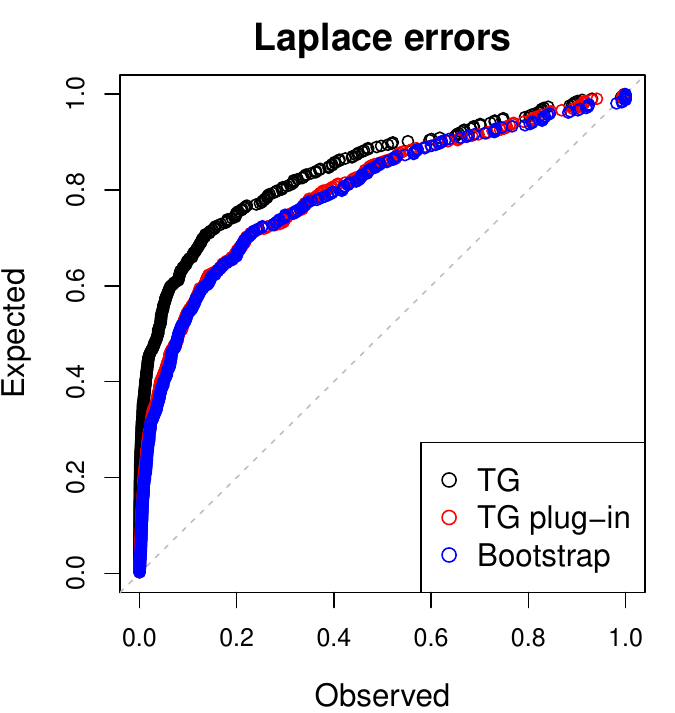} 
\includegraphics[width=0.24\textwidth]{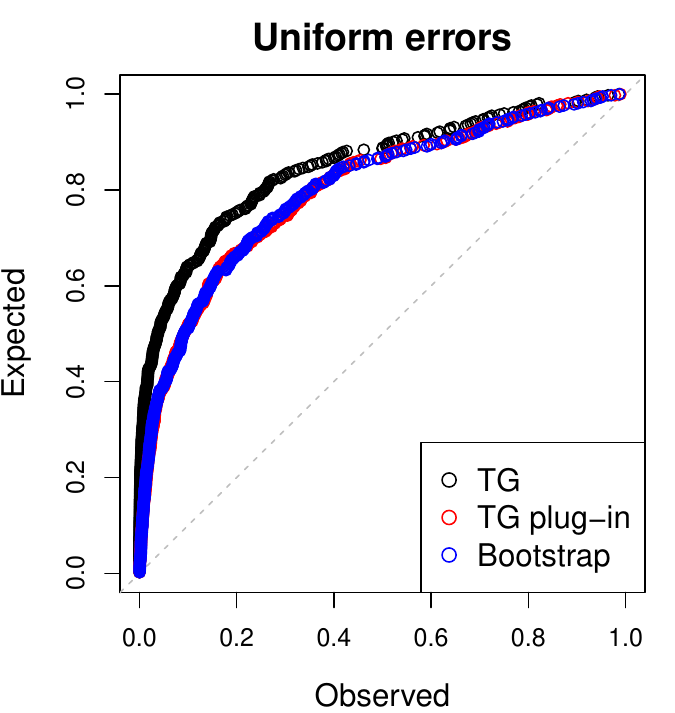} 
\includegraphics[width=0.24\textwidth]{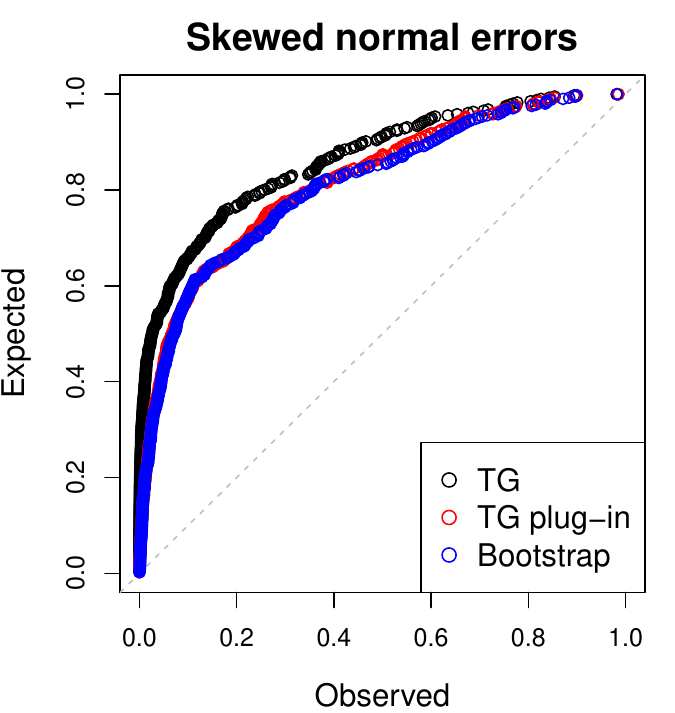} 
\smallskip \\
Step 3, p-values \smallskip \\ 
\includegraphics[width=0.24\textwidth]{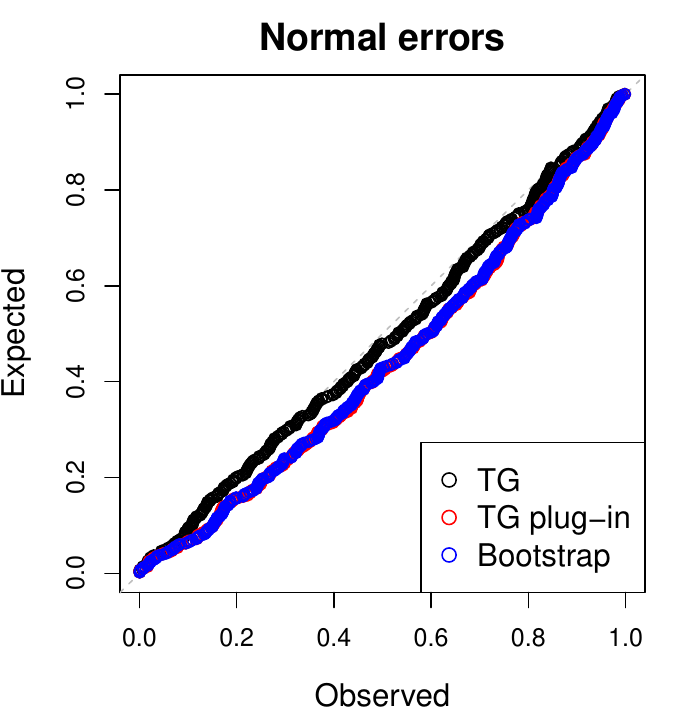}
\includegraphics[width=0.24\textwidth]{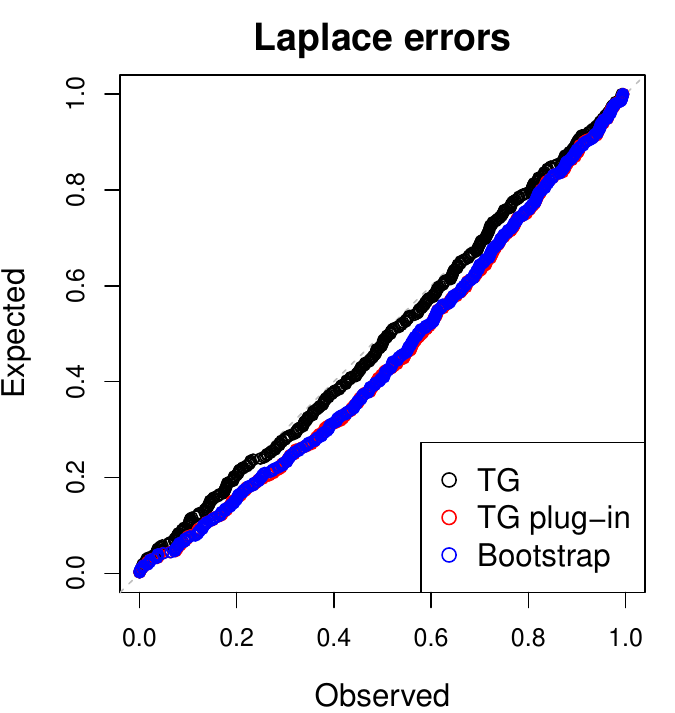}
\includegraphics[width=0.24\textwidth]{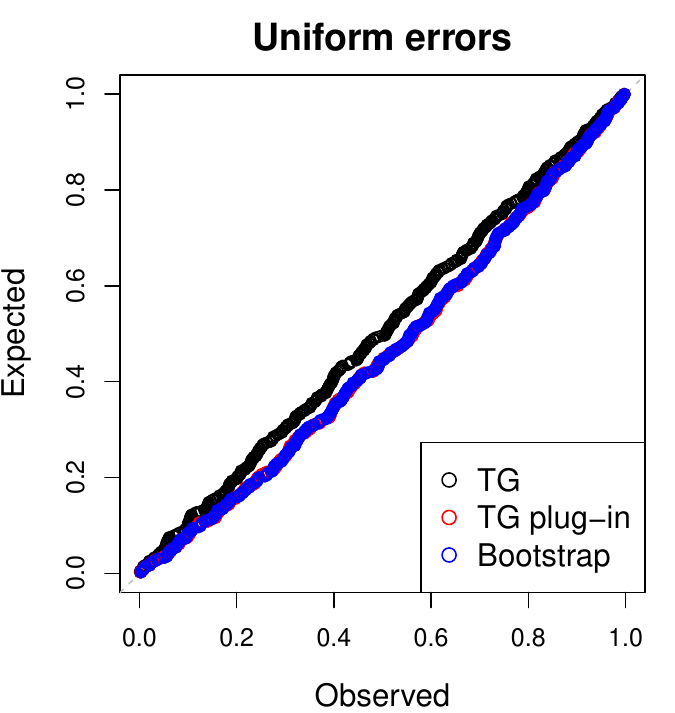}
\includegraphics[width=0.24\textwidth]{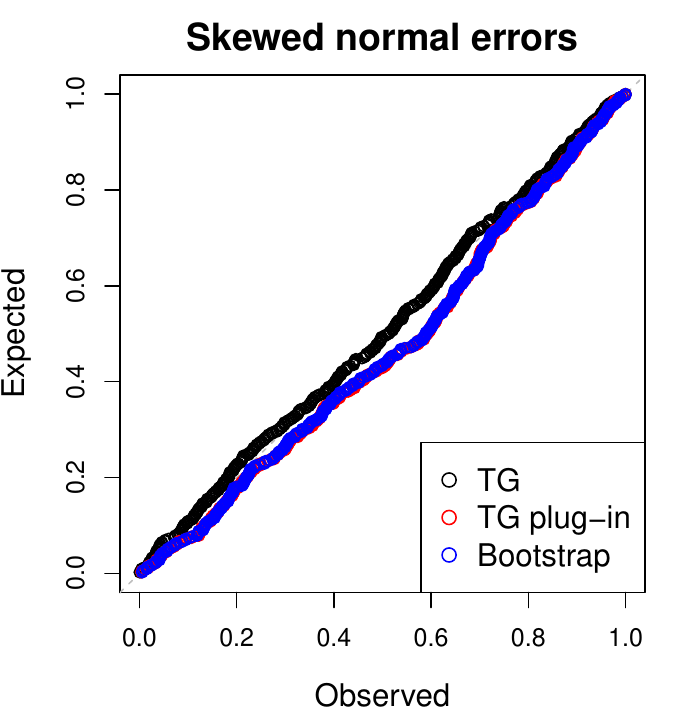}
\caption{\it\small P-values are shown, after each of 3 steps of LAR.}
\label{fig:lo_pval}
\end{subfigure}

\bigskip\smallskip\smallskip
\begin{subfigure}[b]{\textwidth}
\centering
All steps, pivotal statistics \smallskip \\
\includegraphics[width=0.24\textwidth]{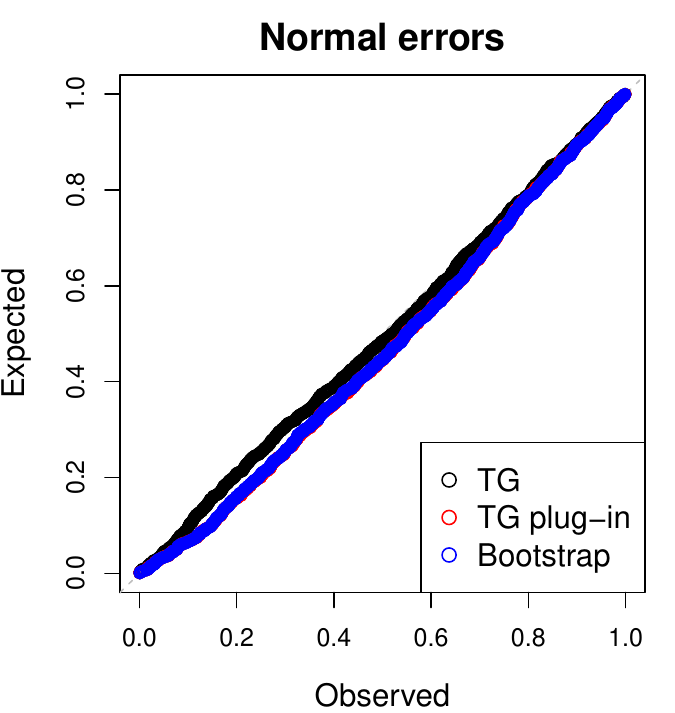}
\includegraphics[width=0.24\textwidth]{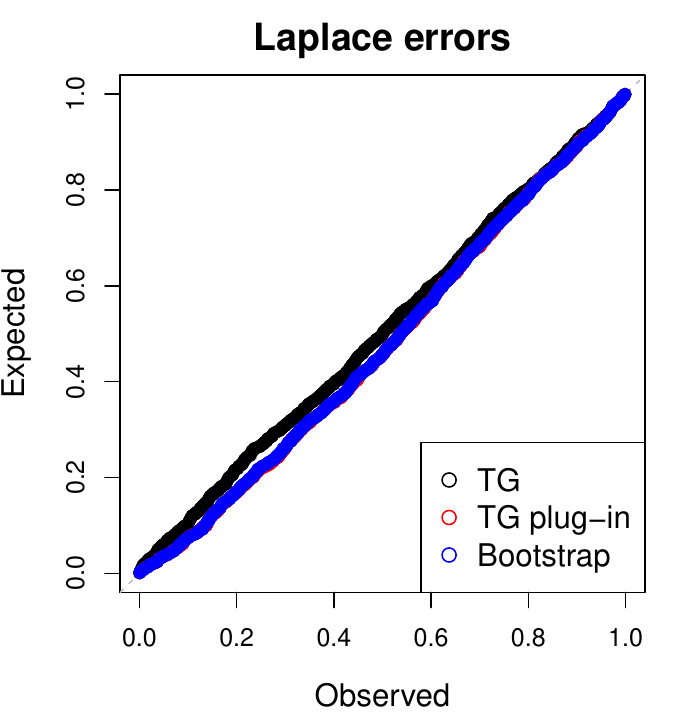}
\includegraphics[width=0.24\textwidth]{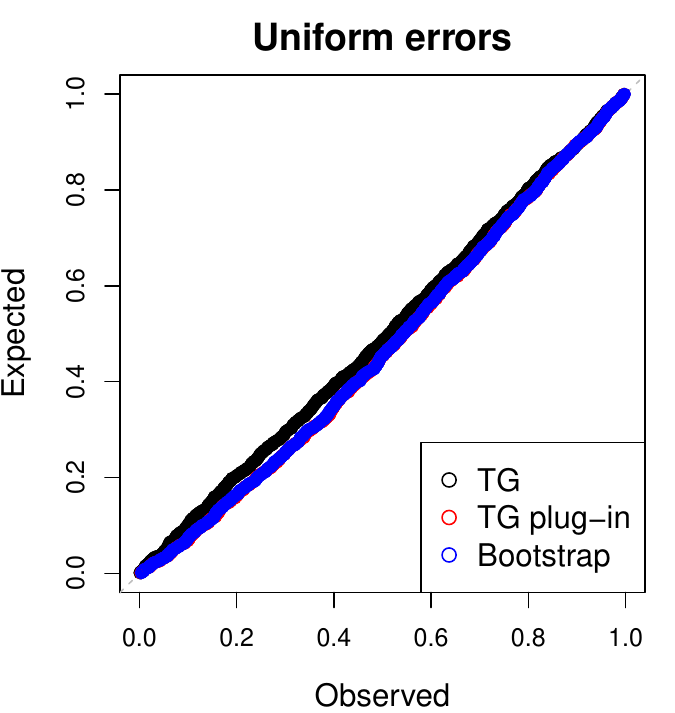}
\includegraphics[width=0.24\textwidth]{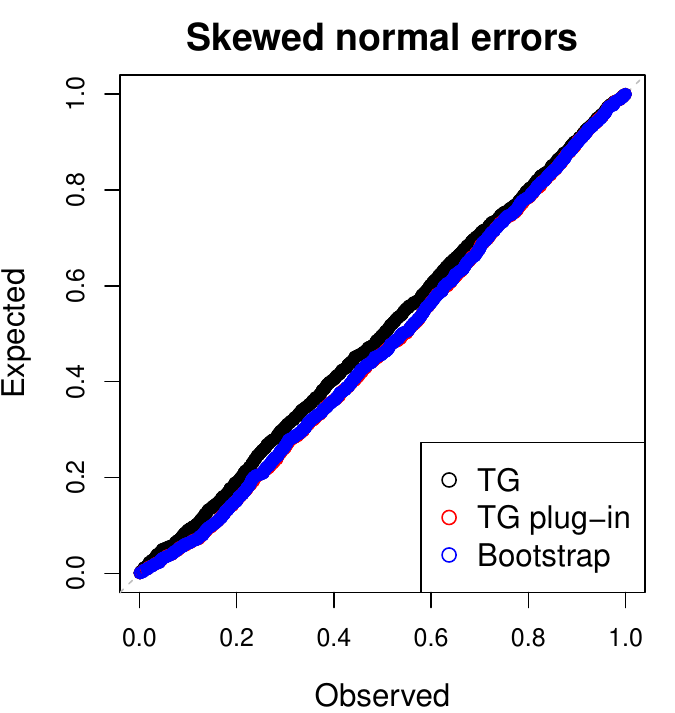}
\caption{\it\small Pivotal statistics are shown, aggregated over all 
  3 steps of LAR.}
\label{fig:lo_pivot}
\end{subfigure}

\caption{\it\small A simulation setup with $n=50$ and $d=10$, and
  a mean $\theta=X\beta_0$, where $\beta_0$ has 2 nonzero components.} 
\label{fig:lo}
\end{figure}

Figure \ref{fig:lo_pivot} inspects the TG statistic and plug-in
and boostrap variants, when the pivot value $\mu$ is set to the true 
population value.  That is, we set $\mu=v^T \theta$ in computing the
statistics in \eqref{eq:pivot_succint},  
\eqref{eq:pivot_plugin}, and \eqref{eq:pivot_boot}, in each data
instance and each step of LAR. The figure collects the p-values across
all 3 steps of LAR, for each of the 4 error distribution
types. According to our theory, the distribution of the TG
pivotal statistics here should be asymptotically uniform.  This is
clearly supported by the QQ plots.  Interestingly, both plug-in and
bootstrap pivotal statistics also appear uniform in the QQ plots, and  
yet, this is not a case handled by our asymptotic theory: recall, 
Theorem \ref{thm:unknown} fixes the pivot value $\mu$ to be 0
(as, otherwise, technical difficulties are encountered in its proof). 
This gives empirical evidence to the idea that a more refined analysis
could extend Theorem \ref{thm:unknown} to the broader setting (of
arbitrary pivot values) handled by Theorem \ref{thm:pivot}.
Moreover, it suggests that inverting the plug-in and bootstrap TG
statistics should yield intervals with proper coverage, which is
verified in the next subsection. 

Lastly, we repeated all experiments in this subsection
with the predictors $X \in \R^{50 \times 10}$
generated in such a way to induce a (population) correlation of 0.5
between all pairs of predictor variables. The results are quite
similar to those shown in Figure \ref{fig:lo}, and are hence deferred
to Appendix \ref{app:pval_ex_cor}.

\subsection{Confidence interval examples}
\label{sec:interval_ex}

We stay in same setting as the last subsection, so that $n=50$,
$d=10$, and $\theta=X\beta_0$ for a coefficient vector $\beta_0$ with
its first 2 components equal to $-4$ and $4$, and the rest equal to 0.
We invert the TG, plug-in 
TG, and bootstrap TG statistics to obtain 90\% confidence intervals at
each LAR step. See 
Table \ref{tab:int} for a numerical summary.
``Coverage'' refers to the average fraction of intervals that
contained their respective targets over the 500 repetitions, ``power''
is the average fraction of intervals that excluded zero, and ``width''
is the median interval width. These are all recorded in
an unconditional sense, i.e., no screening of
repetitions was performed based on the variables that were selected  
across the 3 steps of LAR (the conditional coverages however, were quite
similar).  From the table, we can see that all 3
methods lead to accurate coverage (around 90\%) in all cases. We can
further see that the intervals from the bootstrap TG statistic are 
shorter than those from the plug-in TG statistic in all cases, and 
considerably shorter than both the plug-in and original TG statistics
in steps 2 and 3.  The power from the bootstrap TG intervals is
generally better than that from the plug-in TG intervals; also, it is 
on par with the power from the original TG statistic in step 1, but
somewhat worse in step 2. Recall that the original TG statistic
uses knowledge of the error variance ($\sigma^2=1$) but the
bootstrap and plug-in variants do not. 

\begin{table}[htbp]
\small
\centering
\begin{tabular}{|r|r|}
\multicolumn{2}{r}{} \\ 
\multicolumn{2}{r}{} \\ 
\hline
\multirow{3}{*}{N} 
& \multicolumn{1}{r|}{TG} \\
& \multicolumn{1}{r|}{Plug-in} \\
& \multicolumn{1}{r|}{Boot} \\
\hline
\multirow{3}{*}{L} 
& \multicolumn{1}{r|}{TG} \\
& \multicolumn{1}{r|}{Plug-in} \\
& \multicolumn{1}{r|}{Boot} \\
\hline
\multirow{3}{*}{U}
& \multicolumn{1}{r|}{TG} \\
& \multicolumn{1}{r|}{Plug-in} \\
& \multicolumn{1}{r|}{Boot} \\
\hline
\multirow{3}{*}{S}
& \multicolumn{1}{r|}{TG} \\
& \multicolumn{1}{r|}{Plug-in} \\
& \multicolumn{1}{r|}{Boot} \\
\hline
\end{tabular}
\begin{tabular}{|rrr|}
\hline
\multicolumn{3}{|c|}{Step 1} \\
\hline
Coverage & Power & Width \\
\hline
0.914 & 0.508 & 5.622 \\
0.928 & 0.378 & 7.561 \\
0.932 & 0.528 & 5.477 \\
\hline
0.904 & 0.568 & 5.193 \\
0.944 & 0.410 & 7.271 \\
0.944 & 0.566 & 5.429 \\
\hline
0.912 & 0.538 & 5.153 \\
0.928 & 0.396 & 7.284 \\
0.924 & 0.540 & 5.453 \\
\hline
0.892 & 0.540 & 5.346 \\
0.940 & 0.402 & 7.210 \\
0.936 & 0.520 & 5.477 \\
\hline
\end{tabular}
\begin{tabular}{|rrr|}
\hline
\multicolumn{3}{|c|}{Step 2} \\
\hline
Coverage & Power & Width \\
\hline
0.890 & 0.520 & 10.309 \\
0.914 & 0.404 & 15.774 \\
0.916 & 0.424 & 7.856 \\
\hline
0.926 & 0.536 & 11.153 \\
0.930 & 0.440 & 14.859 \\
0.944 & 0.454 & 7.892 \\
\hline
0.902 & 0.504 & 12.347 \\
0.910 & 0.390 & 17.497 \\
0.910 & 0.422 & 7.808 \\
\hline
0.878 & 0.504 & 10.876 \\
0.896 & 0.380 & 15.687 \\
0.912 & 0.394 & 8.060 \\
\hline
\end{tabular}
\begin{tabular}{|rrr|}
\hline
\multicolumn{3}{|c|}{Step 3} \\
\hline
Coverage & Power & Width \\
\hline
0.910 & 0.114 & 25.155 \\
0.918 & 0.100 & 34.642 \\
0.930 & 0.090 & 9.141 \\
\hline
0.912 & 0.118 & 26.393 \\
0.904 & 0.120 & 36.206 \\
0.924 & 0.108 & 9.273 \\
\hline
0.894 & 0.128 & 26.451 \\
0.886 & 0.126 & 39.299 \\
0.892 & 0.118 & 8.913 \\
\hline
0.906 & 0.116 & 26.592 \\
0.910 & 0.106 & 38.965 \\
0.918 & 0.102 & 9.057 \\
\hline
\end{tabular}
\caption{\small\it Summary statistics for 90\% confidence 
  intervals constructed in the problem setting of
  Figure \ref{fig:lo}. The 4 blocks of rows correspond to the 
  4 types of noise: normal, Laplace, uniform, and skew normal, 
  respectively. The standard errors are about 0.01, 0.02, and 0.42 for 
  the coverage, power, and width statistics, respectively.}    
\label{tab:int}
\end{table}

It is a bit surprising that the bootstrap intervals
can be shorter but still have worse power than the original
TG intervals.  This is easier to understand
once the intervals are visualized, as done in Figure \ref{fig:int_e1}.
The figure shows 100 
sample intervals from the first LAR step, under normally distributed 
errors. Sample intervals from the other error models are shown
in Appendix \ref{app:more_intervals}.  We see that the  
bootstrap TG intervals are indeed shorter, but compared to the
original TG intervals, they are more symmetric around the
target population values.  The original TG intervals, being more
asymmetric, are often shorter on the side (of the target value) 
facing 0, and this results in better power.   

\begin{figure}[p]
\centering
\includegraphics[width=\textwidth]{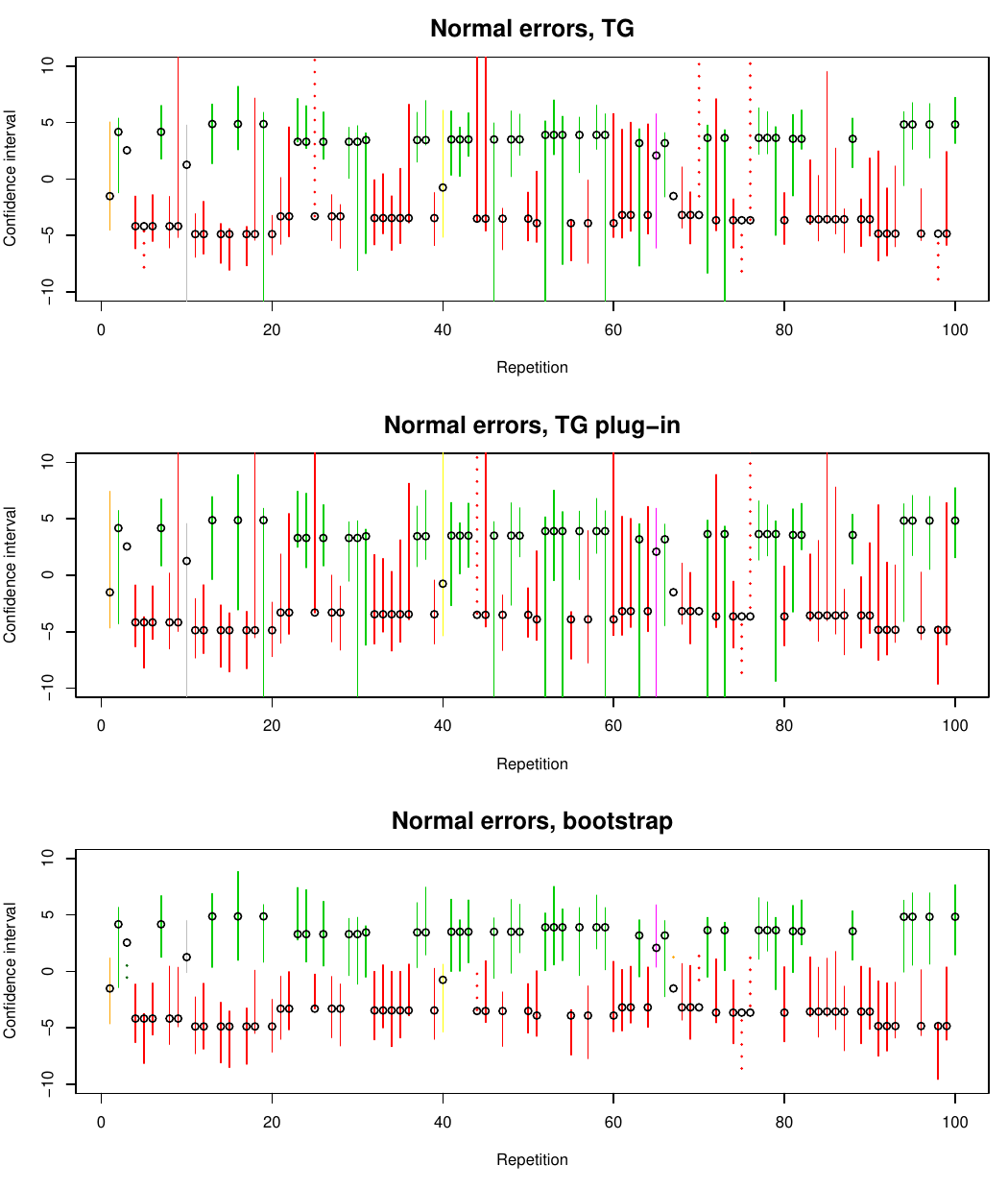}
\caption{\small\it Confidence intervals from 100 draws of $Y$ from the  
  same model as that in Figure \ref{fig:lo}.  These intervals are
  constructed from the first step of LAR, under a uniform 
  distribution for noise.  The colors are simply a visual aid to mark
  the selection of different variables at step 1.  The open circles
  denote the true population quantity to be covered (here, the 
  coefficient from projecting $\theta$ onto the first selected
  variable). Intervals that do not contain their targets are drawn
  as dotted segments.}
\label{fig:int_e1}
\end{figure}

Again, we repeated the experiments here
with the predictors $X \in \R^{50 \times 10}$
generated to have pairwise correlation 0.5. Comparisons can be
drawn between the results in a manner
that roughly parallels the discussions following Table \ref{tab:int};
however, on an absolute scale, all methods display a decrease in power 
across the board (as correlated predictors clearly make the problem  
more difficult). Details are provided in Appendix
\ref{app:interval_ex_cor}.  


\subsection{Heteroskedastic errors}
\label{sec:hetero_er}

In the same setup as in Sections \ref{sec:pval_ex} and
\ref{sec:interval_ex}, with $n=50$, $d=10$,
and the predictors $X$ and mean $\theta$ generated in the same manner,
we consider a heteroskedastic model for $Y$ by drawing $\epsilon'_i$,  
$i=1,\ldots,n$ i.i.d.\ from the given distribution---normal, Laplace,
uniform, or skew normal---and then taking the errors to be 
$\epsilon_i=\sigma_i \epsilon_i'$, $i=1,\ldots,n$, where
$\sigma^2_i=10\|x_i\|_2^2$, $i=1,\ldots,n$ (and where $x_i \in \R^d$, 
$i=1,\ldots,n$ denote the rows of $X$.)  The spread of error variances
ended up being fairly substantial, from about 0.3 to 5.5.  The
original TG statistic was computed with \smash{$\sigma^2 =
  \frac{1}{n} \sum_{i=1}^n \sigma_i^2$} as a surrogate for the common 
error variance; the plug-in and bootstrap variants were computed as
usual.  For brevity, we only plot the pivotal statistics, aggregated
over 3 steps of LAR, in Figure \ref{fig:hetero}.   (This is analogous
to what is shown in Figure \ref{fig:lo_pivot} for the homoskedastic 
case. P-values at steps 1, 2, 
and 3, not shown, end up being similar to those in Figure 
\ref{fig:lo_pval}, but the power from all methods is generally lower,
due to the heteroskedastic errors.) As we can see,
the pivotal statistics in the figure look very close
to uniformly distributed, as desired.  
This is especially encouraging because the current problem setup lies
outside of the scope of 
our asymptotic theory (which assumes a constant error variance), and
it suggests that our theory could possibly be extended to accomodate
errors with an (unknown) nonconstant variance structure. 

\begin{figure}[tb]
\centering
\includegraphics[width=0.24\textwidth]{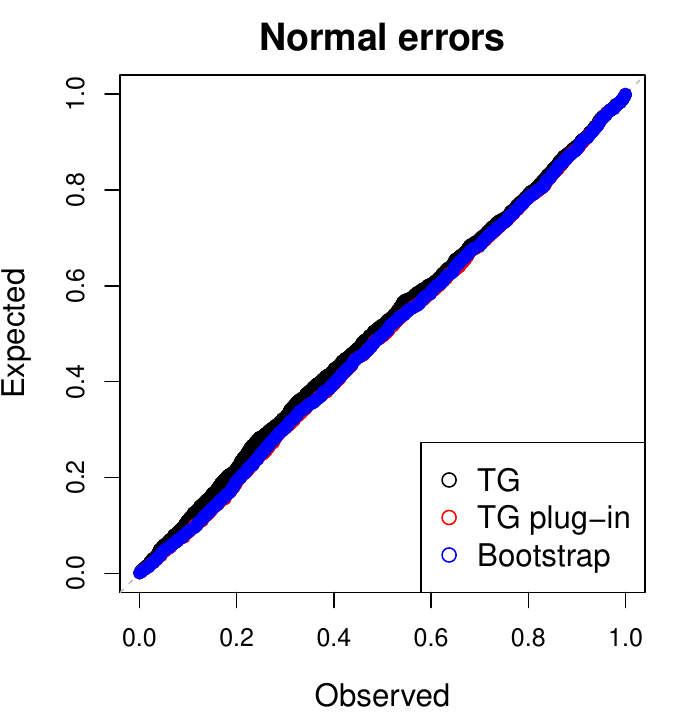}
\includegraphics[width=0.24\textwidth]{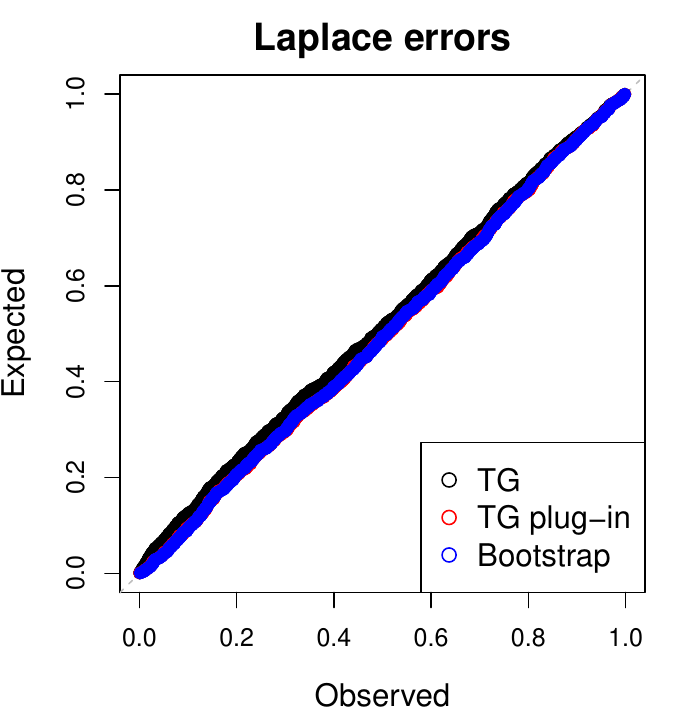}
\includegraphics[width=0.24\textwidth]{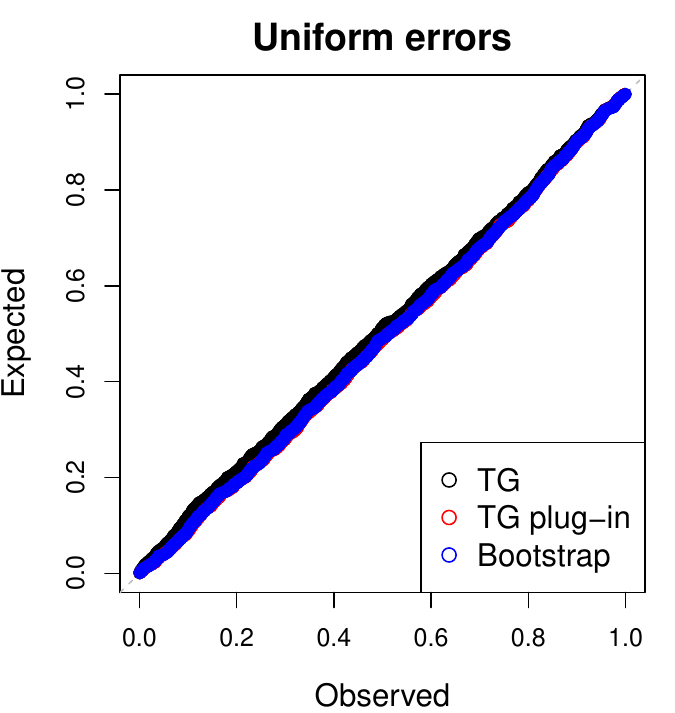}
\includegraphics[width=0.24\textwidth]{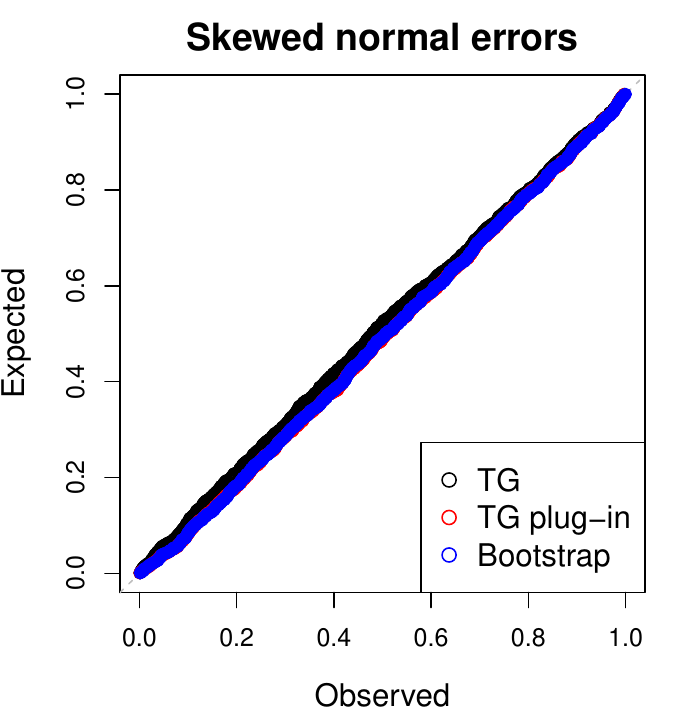} 
\caption{\it\small A simulation setup with $n=50$ and $d=10$, but with
  heteroskedastic errors. Shown are the pivotal statistics aggregated
  over 3 LAR steps.} 
\label{fig:hetero}
\end{figure}

\subsection{High-dimensional examples}
\label{sec:highdim_ex}

Finally, we consider a high-dimensional regime with $n=50$ and 
$d=1000$ predictors.  The matrix $X \in \R^{50 \times 1000}$ was
generated according to the same recipe as before: each column, with
equal probability, was assigned i.i.d.\ entries from $N(0,1)$, 
$\mathrm{Bern}(0.5)$, or\ $SN(0,1,5)$, and
then scaled to have unit norm. The mean was defined as $\theta=
X\beta_0$, where $\beta_0 \in \R^{1000}$ has its first 2 components
equal to -4 and 4, and the rest 0.  Over 500 repetitions, a
response $Y \in \R^{50}$ was generated by adding normal, Laplace,  
uniform, or skew normal noise to $\theta$, with an error variance of 
$\sigma^2=1$ (and every 10 repetitions, the predictor matrix $X$ was
regenerated). Figure \ref{fig:hi} plots the pivotal
statistics aggregated over the first 3 steps of LAR.  (This is as in
Figure \ref{fig:lo_pivot} for the low-dimensional case. P-values from 
the first 3 LAR steps are omitted for brevity, and are roughly
similar to those in Figure \ref{fig:lo_pval}, except that they display
less power, due to the high-dimensionality.) The pivotal
statistics here look quite close to uniform, as desired, and this is
again encouraging, especially given that the current high-dimensional
case lies outside of the scope of our theory (which assumes that $d$
is fixed).  Further work on high-dimensional asymptotic theory should
be pursued (see also \citet{tian2017asymptotics}), though, as we show
in the next section, there is no hope for a uniform convergence 
result in high dimensions that holds as generally as the one we
established in Theorem \ref{thm:pivot} for low dimensions. 

\begin{figure}[tb]
\centering
\includegraphics[width=0.24\textwidth]{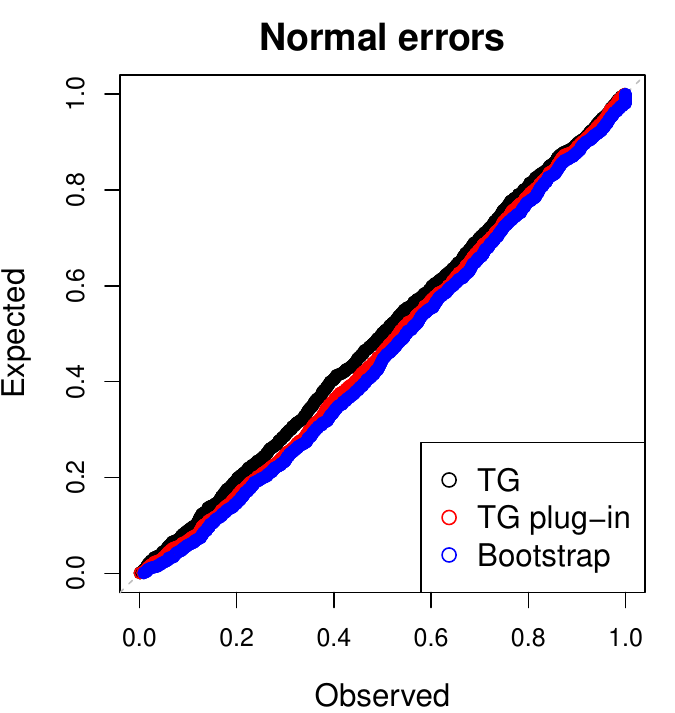}
\includegraphics[width=0.24\textwidth]{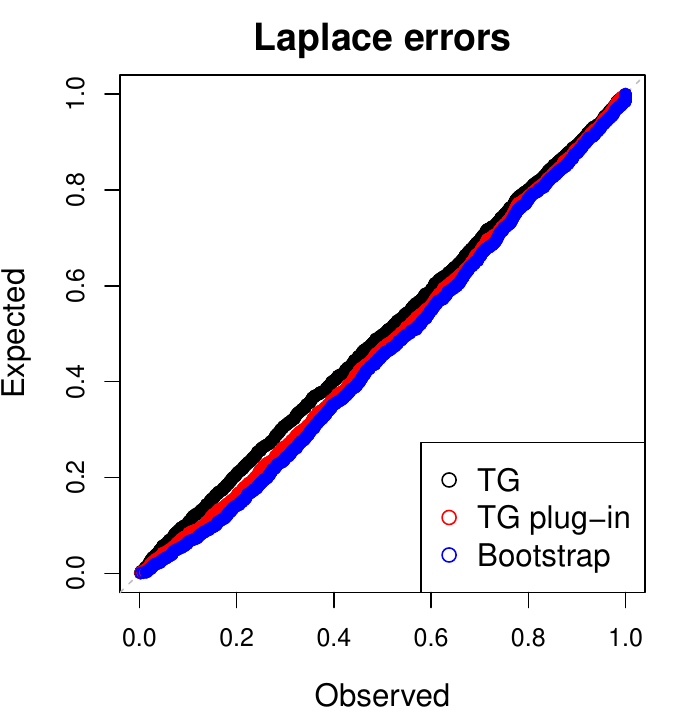}
\includegraphics[width=0.24\textwidth]{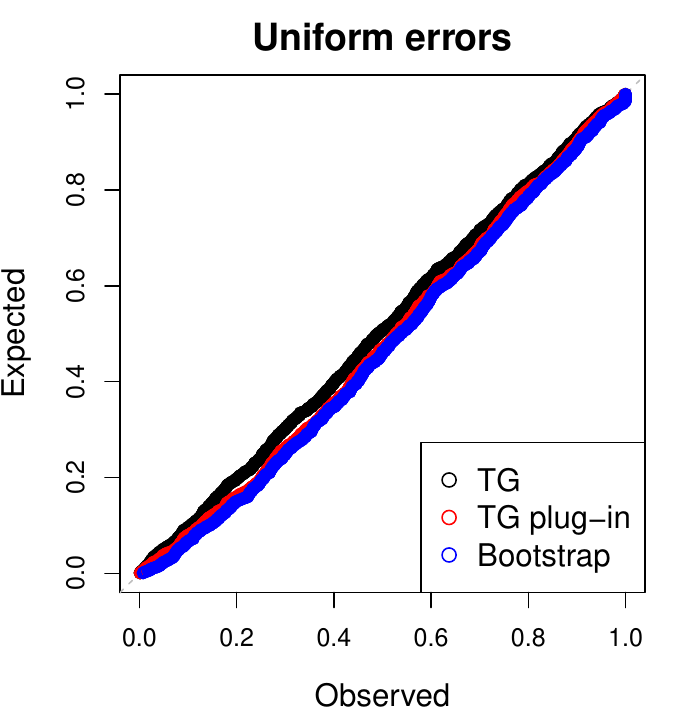}
\includegraphics[width=0.24\textwidth]{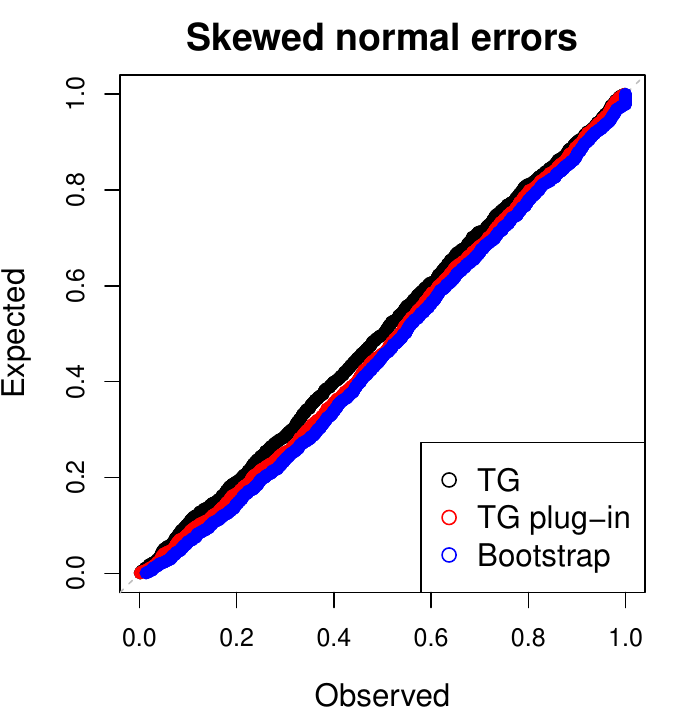} 
\caption{\it\small A simulation setup with $n=50$ and $d=1000$. Shown 
  are the pivotal statistics over 3 LAR steps.}   
\label{fig:hi}
\end{figure}

\section{A negative result in high dimensions} 
\label{sec:highdim}

We prove that the TG statistic fails to converge to a uniform
distribution, under the null hypothesis, in a data model that has
nonnormal errors and is high-dimensional, but otherwise represents a
fairly standard setting: the ``many means'' 
setting. We write the observation model as 
\begin{equation}
\label{eq:manymeans}
Y_{ij} = \mu_j + \epsilon_{ij}, \;\;\; 
i=1,\ldots,m, \; j=1,\ldots,d,
\end{equation}
where we interpret $i=1,\ldots,m$ as replications, and $j=1,\ldots,d$
as dimensions.  In total there are hence $n=md$ observations.  Denote 
\begin{equation*}
\bar{Y}_j = \frac{1}{m} \sum_{i=1}^m Y_{ij}, \;\;\; j=1,\ldots,d.
\end{equation*}
We will analyze the TG statistic, when selection is performed based on 
the largest of \smash{$|\bar{Y}_j|$}, $j=1,\ldots,d$, and inference is
then performed on the corresponding mean parameter. A
straightforward change of notation will translate 
the above into a regression problem, with an orthogonal design $X 
\in  \R^{n\times d}$, but we stick with the many means
formulation of the problem for simplicity.  

We assume that the errors $\epsilon_{ij}$, $i=1,\ldots,m$,
$j=1,\ldots,d$ in \eqref{eq:manymeans} are i.i.d.\ from the 
following mixture:
\begin{equation}
\label{eq:mixture}
\pi \cdot N(-B, 1) + (1-2\pi) \cdot N(0,1) + \pi
\cdot N(B, 1).
\end{equation}
The mixing proportion $\pi$ and mean shift $B$ will both scale
with $d$. Moreover, they will be chosen so that (for each $d$) the 
error variance is  
\begin{equation*}
\sigma^2 = 1+2\pi B^2 = 2.
\end{equation*}
As mentioned, we will consider model selection events of the form  
\begin{equation*}
\hat{M}(Y)=(j,s) \iff
s \bar{Y}_j \geq \max_{\ell \not= j} \; |\bar{Y}_\ell|.
\end{equation*}  
We note that this is exactly the same selection event as that 
from the first step of FS, LAR, or lasso paths, when run on
the regression version of this problem with orthogonal design
$X$. It is not hard to check that the TG statistic for
conditionally testing $\mu_j=0$, given that \smash{$\hat{M}(Y)=(j,s)$}, is    
\begin{equation}
\label{eq:mm_cond_pivot}
T(Y;j,s,0) = \frac{\displaystyle
1-\Phi\Bigg(\frac{\sqrt{m} s \bar{Y}_j}{\sqrt{2}}\Bigg)}
{\displaystyle
1-\Phi\Bigg(\frac{\max_{\ell\not=j} \;
  \sqrt{m} |\bar{Y}_\ell|}{\sqrt{2}}\Bigg)}.
\end{equation}
As per the spirit of our paper, we can also view this statistic
unconditionally; for this it is helpful to define
\smash{$W_1 = |\bar{Y}_1|, \ldots,W_d = |\bar{Y}_d|$}, and denote by 
$W_{(1)} \geq \ldots \geq W_{(d)}$ the order statistics.  Then from
\eqref{eq:mm_cond_pivot}, we can see that the
unconditional TG statistic for testing the selected mean being 0 is  
\begin{equation}
\label{eq:mm_pivot}
\cT(Y;0) = \frac{\displaystyle
1-\Phi\Bigg(\frac{\sqrt{m} W_{(1)}}{\sqrt{2}}\Bigg)}
{\displaystyle
1-\Phi\Bigg(\frac{\sqrt{m} W_{(2)}}{\sqrt{2}}\Bigg)}. 
\end{equation} 
The framework underlying the TG statistic tells us that if the errors in
\eqref{eq:manymeans} are i.i.d.\ $N(0,2)$, then for any fixed model $(j,s)$, the
pivot $T(Y;j,s,0)$ is uniformly distributed conditional on
\smash{$\hat{M}(Y)=(j,s)$}.  Further, if $W_{(1)}$ and $W_{(2)}$ are the 
largest and second largest absolute values of centered normal random
variables (each with variance $2/m$), then the unconditional pivot $\cT(Y;0)$ is  
again uniform. But when $W_{(1)},W_{(2)}$ are large, and are defined by the
order statistics of nonnormal random variates, the statistic $\cT(Y;0)$---which
in this case is defined by the extreme tail behavior of the normal 
distribution---could be nonuniform.  The next theorem asserts that such
nonuniformity does indeed happen asymptotically if we choose the mixture
distribution in \eqref{eq:mixture} appropriately.  

\begin{theorem}
\label{thm:highdim}
Assume the observation model \eqref{eq:manymeans}, where the errors
are all drawn i.i.d.\ from \eqref{eq:mixture}.  Let $d$ and $m$ scale
in such a manner that $(\log{d})/m \to \infty$.  Further, let
\begin{equation*}
\pi=\Bigg(\frac{1}{d}\Bigg)^{1/m}, \;\;\;
B=\sqrt{\frac{d^{1/m}}{2}},
\end{equation*}
so that the error variance is fixed at $\sigma^2=2$.
Then under the global null hypothesis, $\mu=0$, the unconditional 
TG statistic $\cT(Y;0)$ in \eqref{eq:mm_pivot} does not converge in
distribution to $U(0,1)$. In particular, on 
an event whose limiting probability is at least $1/e$, the
statistic $\cT(Y;0)$ converges to 0. 

Further, the same results hold conditionally on any selected model.  That 
is, for any fixed $(j,s)$, the conditional TG statistic \smash{$T(Y;j,s,0) \,|\,
  \hat{M}(Y)=(j,s)$} does not converge in distribution to $U(0,1)$, and on an
event with limiting probability (conditional on \smash{$\hat{M}(Y)=(j,s)$}) at
least $1/e$, it converges to 0.   
\end{theorem}

\begin{remark}  The assumed condition $(\log d)/m \to \infty$  
requires the dimension $d$ to diverge to $\infty$, but
not necessarily the number of replications $m$, though it clearly
allows $m$ to diverge at a sufficiently slow rate.  On the other hand,
if $d$ were fixed and $m$ diverged to $\infty$, then the result of the
theorem would no longer be true, and the limiting distribution of the 
TG p-value would revert to $U(0,1)$.  (To be careful, here we
would have cap the mixing probability $\pi$ at $1/2$ in order
for the mixture to make sense, since the current definition of $\pi$ 
diverges with $d$ fixed and $m$ tending to $\infty$.) In fact, this is 
ensured by our low-dimensional result in Theorem \ref{thm:pivot}:
after reformulating the many means problem in
appropriate regression notation, all of
the conditions of Theorem \ref{thm:pivot} are met by our current
setup when $d$ is fixed.  This is supported by the simulation
in Figure \ref{fig:larry}.
\end{remark} 

\begin{figure}[tb]
\centering
\includegraphics[width=0.4\textwidth]{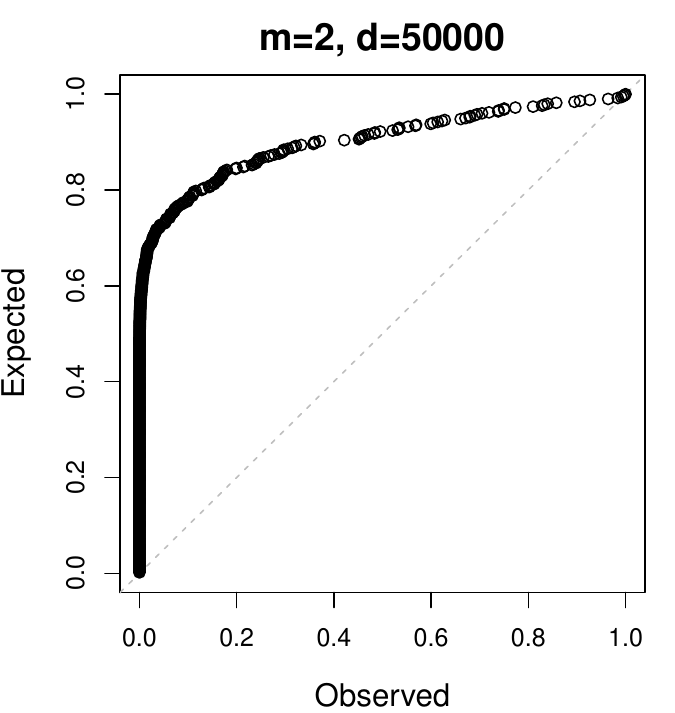}
\includegraphics[width=0.4\textwidth]{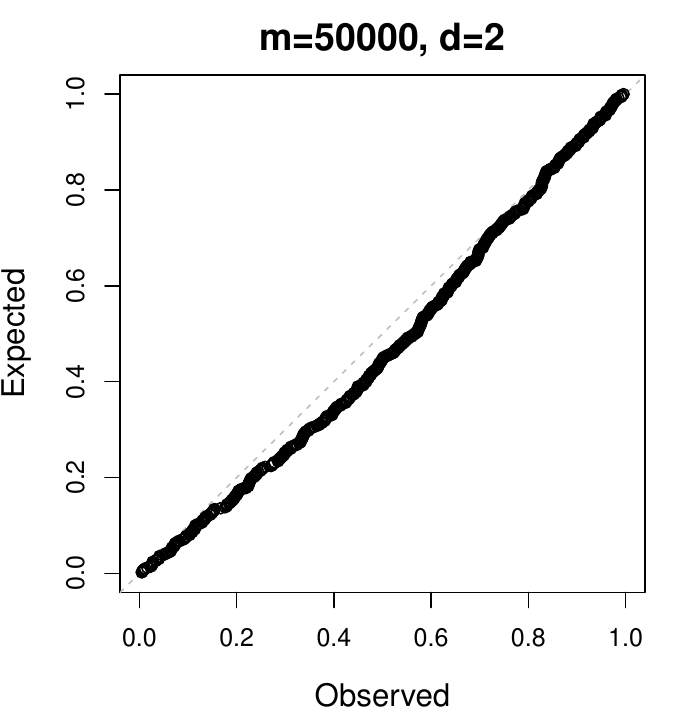}
\caption{\it\small The left plot shows a QQ plot of TG
  p-values, computed over 500 repetitions from the many means setup  
  exactly as described in Theorem \ref{thm:highdim}, with $d=50,000$
  and $m=2$. We can see that the p-values are clearly nonuniform, and
  $34\%$ of the p-values are 0 (up to computer precision), close to the  
  theoretically predicted proportion of $1/e$.  The right plot 
  shows p-values from the same model, but having reversed the roles of
  $d$ and $m$ (we also had to cap $\pi$ at 1/2); we can see that 
  the p-values are essentially uniform. 
}  
\label{fig:larry}
\end{figure}

\begin{remark}  The precise scaling $(\log d)/m \to \infty$ is 
chosen since this implies $\pi=(1/d)^{1/m} \to 0$,
i.e., the extreme mixture components $N(-B,1)$ and $N(B,1)$ each have 
probability tending to 0, an intuitively
reasonable property for the error distribution. But we note that this
scaling is not important for any other reason, and the proof would
still remain correct if $d/m \to \infty$.    
\end{remark} 

\begin{remark} In Theorem 3 of \citet{tian2017asymptotics}, the 
authors show that the TG statistic converges in distribution to a
standard uniform random variable, in a high-dimensional problem
setting, with some restrictions on the sequences of selection events
that are allowed.  One might ask what part of our high-dimensional
setup here violates their conditions, because both results obviously
cannot be true simultaneously. As far as we can tell, the issue
lies in the role of $\delta_n$ in Assumption 1 of
\citet{tian2017asymptotics}.  Namely, as we have defined the
error distribution in \eqref{eq:mixture}, the value of $\delta_n$
needed to certify the third condition Assumption 1 of
their work is too small for the main assumption in their
Theorem 3 to hold. Hence Theorem 3 of \citet{tian2017asymptotics}
does not apply to our current setup.
\end{remark}

\section{Discussion}
\label{sec:discussion}

We have studied the selective pivotal inference framework, with a
focus on forward stepwise regression (FS), least angle regression
(LAR), and the lasso, in regression problems with nonnormal  
errors.  We have shown that the truncated Gaussian (TG) pivot is
asymptotically robust in low-dimensional settings to departures from  
normality, in that it converges to a $U(0,1)$ distribution (its
pivotal distribution under normality), and does so uniformly over a
broad class of nonnormal error distributions.  When the
error variance $\sigma^2$ is unknown, we have proposed plug-in and
bootstrap versions of the TG statistic, both of which yield provably 
conservative asymptotic p-values.  

Our numerical experiments revealed that the statistics under
theoretical investigation generally display excellent finite-sample
performance, for highly nonnormal error distributions.  These
experiments also revealed findings not predicted by our theory: (i)
the bootstrap TG statistic often produces shorter confidence intervals 
than those based on the plug-in TG statistic, and even the TG
statistic that relies on the error variance $\sigma^2$; and (ii) all 
three TG statistics show strong empirical properties
well-outside of the classic homoskedastic, fixed $d$ regression
setting that we presumed theoretically. 

However, as we have clearly demonstrated, 
one should not hope for a convergence result in high dimensions that
is as general as the result obtained in low dimensions.  In a
relatively simple many means problem, we showed the nonconvergence
of the TG statistic to $U(0,1)$ as $d \to \infty$, whereas
in the same problem but with $d$ fixed, the TG statistic converges
to its usual $U(0,1)$ limit. 

There is still much left to do in terms of understanding the behavior 
of selective pivotal inference tools that are constructed to have
exact finite-sample guarantees under normality, like the TG statistic
of \citet{tibshirani2016exact}, when applied in high-dimensional
regression settings with nonnormal data. When the pivot, the central
cog of this framework, is constructed under the assumption of normality, 
this creates robustness issues that are especially worrisome in high
dimensions. Appendix \ref{app:instability} provides a high-level discussion 
of some of these issues; a more detailed study will be the subject of future
research.    

\subsection*{Acknolwedgements}

We thank Jelena Markovic and Jonathan Taylor for 
many helpful discussions, and for their overall generosity.  An initial
version of our work contained only unconditional (i.e., marginal) results in
the main theorems (Theorems \ref{thm:pivot}, \ref{thm:unknown}, and
\ref{thm:highdim}); Jelena Markovic pointed out that Theorem
\ref{thm:pivot} should also hold conditionally, and the current version of this
work has been revised accordingly.

\appendix

\section{Appendix}

\subsection{Convex cones for FS, LAR, lasso}
\label{app:partition}

We describe a modification of the conic conditioning set in
\citet{tibshirani2016exact} for FS.  Our version is different in that
we additionally condition on the sign of {\it every active
  coefficient} at every step, rather than just the coefficient of the
variable to enter the model at each step.  The modifications needed
for the LAR and lasso conditioning sets, made on top of the sets for
LAR and lasso given in \citet{tibshirani2016exact}, will follow
similarly to that described for FS, and hence we omit the details.

After $k$ FS steps, we can always represent a sequence of 
active sets \smash{$\hat{A}_\ell(y)$}, $\ell = 1,\ldots,k$ by a sorted
list of variables \smash{$[\hat{j}_1(y), \ldots,\hat{j}_k(y)]$} that were
chosen to enter the model at each step.  Unfortunately, the same
cannot be done for a sequence
of active signs \smash{$\hat{s}_\ell(y)$}, $\ell = 1,\ldots,k$,
because these do not obey such a nested structure.  We will write   
\smash{$\hat{s}_\ell(y)=[\hat{s}_{\ell,1}(y) ,\ldots
  \hat{s}_{\ell,\ell}(y)]$} for the signs of coefficients corresponding
to the variables \smash{$[\hat{j}_1(y), \ldots,\hat{j}_\ell(y)]$}, at
the $\ell$th step.

Now we characterize the event that \smash{$\hat{A}_\ell(y)=A_\ell$},
\smash{$\hat{s}_\ell(y)=s_\ell$}, for $\ell=1,\ldots,k$, using
induction.  At step $\ell=1$, we have that \smash{$\hat{j}_1(y)=j_1$}
and \smash{$\hat{s}_1(y)=s_1$} if and only if
\begin{equation*}
s_1 X_{j_1}^T y / \|X_{j_1}\|_2^2 \geq \pm X_j^T y/\|X_j\|_2^2
\;\;\; \text{for all $j\not=j_1$},
\end{equation*}
or, rearranged,
\begin{equation*}
\big(s_1 X_{j_1}/\|X_{j_1}\|_2^2 \pm X_j/\|X_j\|_2^2 \big)^T y \geq 0
\;\;\; \text{for all $j\not=j_1$},
\end{equation*}
a set of $2(d-1)$ linear inequalities in $y$.  Assume that we have
represented the event that \smash{$\hat{A}_\ell(y)=A_\ell$}
and \smash{$\hat{s}_\ell(y)=s_\ell$}, for $\ell=1,\ldots,k-1$, by a
collection of linear inequalities in $y$.  Then to represent 
\smash{$\hat{j}_k(y)=j_k$} and 
\smash{$\hat{s}_k(y) = [s_{k,1}, \ldots,s_{k,k}]$}, we must only
append to this collection of inequalities.  The former subevent
\smash{$\hat{j}_k(y)=j_k$} is characterized by  
\begin{equation*}
\big(s_{k,k} \tilde{X}_{j_k}/\|\tilde{X}_{j_k}\|_2^2 \pm 
\tilde{X}_j/\|\tilde{X}_j\|_2^2 \big)^T r \geq 0
\;\;\; \text{for all $j \not= j_1, \ldots,j_k$},
\end{equation*}
where \smash{$\tilde{X}_j$} is the residual from regressing $X_j$ onto 
\smash{$X_{A_{k-1}}$}, and $r$ is the residual from regression $y$
onto \smash{$X_{A_{k-1}}$}.  By expressing  
\smash{$\tilde{X}_j = P_{A_{k-1}}^\perp X_j$} 
and \smash{$r = P_{A_{k-1}}^\perp X_j$}, where
\smash{$P_{A_{k-1}}^\perp$} projects onto the orthocomplement
of the column space of \smash{$X_{A_{k-1}}$}, we can rewrite the above
constraints as 
\begin{equation*}
\big(s_{k,k} P_{A_{k-1}}^\perp X_{j_k}/
\|P_{A_{k-1}}^\perp X_{j_k}\|_2^2 \pm  
P_{A_{k-1}}^\perp X_j/\|P_{A_{k-1}}^\perp X_j\|_2^2 \big)^T y \geq 0 
\;\;\; \text{for all $j \not= j_1, \ldots,j_k$},
\end{equation*}
a set of $2(d-k)$ linear inequalities in $y$.
Meanwhile, the subevent 
\smash{$\hat{s}_k(y) = [s_{k,1}, \ldots,s_{k,k}]$} can be
characterized by $k$ inequalities expressed in block form, 
\begin{equation*}
\mathrm{diag}(s_{1,1}, \ldots,s_{k,k}) \, (X_{A_k}^T X_{A_k})^{-1}
X_{A_k}^T y \geq 0.
\end{equation*}
This completes the proof. 

\subsection{Proof of Lemma \ref{lem:master1}}
\label{app:master1}

We prove the result for FS; the results for the LAR and lasso paths 
follows similarly, by inpsecting the form of the linear inequalities
that determine their selection events.

Consider the first FS step as described in Appendix
\ref{app:partition}.  Multiplying through by $\sqrt{n}$, we see that
an equivalent set of inequalities that characterize the selection
event \smash{$\hat{j}_1(y)=j_1$}, \smash{$\hat{s}_1(y)=s_1$} is
\begin{equation*}
s_1 \frac{n}{X_{j_1}^T X_{j_1}} \frac{X_{j_1}^T y}{\sqrt{n}} 
\pm \frac{n}{X_j^T X_j} \frac{X_j^T y}{\sqrt{n}} \geq 0
\;\;\; \text{for all $j\not=j_1$}.
\end{equation*}
This is clearly of the desired form 
\smash{$P_1(\frac{1}{n} X^T X) \, \frac{1}{\sqrt{n}}
  X^T y \geq 0$}, for a matrix \smash{$P_1(\frac{1}{n} X^T X)$}
dependent only on \smash{$\frac{1}{n} X^T X$}.  At the $k$th step of
FS, there are two sets of inequalities to be examined: one that
describes the variable to enter \smash{$\hat{j}_k(y)=j_k$}, and the
second that describes the active signs 
\smash{$\hat{s}_k(y) = [s_{k,1}, \ldots,s_{k,k}]$}.  The first set,
multiplying through by $\sqrt{n}$, is
\begin{multline*}
s_{k,k} \frac{n X_{j_k}^T X_{A_{k-1}} (X_{A_{k-1}^T} X_{A_{k-1}})^{-1}}
{X_{j_k}^T X_{A_{k-1}} (X_{A_{k-1}^T} X_{A_{k-1}})^{-1} X_{A_{k-1}}^T
  X_{j_k}} 
\frac{X_A^T y}{\sqrt{n}} \pm 
\frac{n X_j^T X_{A_{k-1}} (X_{A_{k-1}^T} X_{A_{k-1}})^{-1}}
{X_j^T X_{A_{k-1}} (X_{A_{k-1}^T} X_{A_{k-1}})^{-1} X_{A_{k-1}}^T
  X_j} 
\frac{X_A^T y}{\sqrt{n}} \geq 0 \\
\text{for all $j \not= j_1, \ldots,j_k$},
\end{multline*}
while the second set, again multiplying through by $\sqrt{n}$, is 
\begin{equation*}
\mathrm{diag}(s_{1,1}, \ldots,s_{k,k}) \, n (X_{A_k}^T X_{A_k})^{-1}
\frac{X_{A_k}^T y}{\sqrt{n}} \geq 0.
\end{equation*}
These inequalities are clearly all summarized by
\smash{$P_k(\frac{1}{n} X^T X) \, \frac{1}{\sqrt{n}} 
  X^T y \geq 0$}, where \smash{$P_k(\frac{1}{n} X^T X)$} is a matrix
that depends only on \smash{$\frac{1}{n} X^T X$}.  This completes the 
proof. 

\subsection{Proof of Lemma \ref{lem:master2}}
\label{app:master2}

Under the conditions of the lemma, the TG pivot for fixed $M$ in
\eqref{eq:cond_pivot} depends only on $X,y$ through the master 
statistic, because, as explained above the lemma, the only dependence 
in the pivot on $X,y$ is through the quantities 
$v^T y/\|v\|_2$, $(Q_M(X) \, v)/\|v\|_2$, 
$Q_M(X) \, y$, and each of these is in turn a function of the master   
statistic $\Omega_n$.  Moreover, we may reexpress the TG statistic in 
\eqref{eq:cond_pivot_defn} as  
\begin{equation*}
T(X,y;M,v,\mu) = 
\frac{\Phi\big(f_1(\frac{1}{n}X^T X, \frac{1}{\sqrt{n}} X^T y)\big) -   
\Phi\big(f_2(\frac{1}{n}X^T X, \frac{1}{\sqrt{n}} X^T y)\big)}
{\Phi\big(f_1(\frac{1}{n}X^T X, \frac{1}{\sqrt{n}} X^T y)\big)  - 
\Phi\big(f_3(\frac{1}{n}X^T X, \frac{1}{\sqrt{n}} X^T y)\big)},
\end{equation*}
for some functions $f_1,f_2,f_3$, or more succinctly, as  
\smash{$T(X,y;M,v,\mu) = \psi_M (\frac{1}{n}X^T X, 
\frac{1} {\sqrt{n}}X^T y)$}, where
\begin{equation*}
\psi_M(S,z) = \frac{\Phi\big(f_1(S,z)\big) - \Phi\big(f_2(S,z)\big)} 
{\Phi\big(f_1(S,z)\big) - \Phi\big(f_3(S,z)\big)}.
\end{equation*}
Note that the quantities $v^T y/\|v\|_2$, $(Q_M(X) \, v)/\|v\|_2$, 
$Q_M(X) \, y$ depend smoothly on the master statistic
\smash{$\Omega_n=(\frac{1}{n}X^T X, \frac{1}{\sqrt{n}}X^T y)$} at any point such 
that \smash{$\frac{1}{n} X^T X$} is nonsingular.  This implies $f_1,f_2,f_3$
are smooth functions of $(S,z)$ at any point such that $S$ is nonsingular. 
Lastly, for all $S,z$ such that $P_M(S) \,z > 0$, we have $f_1(S,z) > f_3(S,z)$,
and thus the denominator of $\phi_M(S,z)$ is positive.  This proves the desired 
continuity result on $\psi_M$. 

\subsection{Proof of Lemma \ref{lem:master3}}
\label{app:master3}

For the master statistic \smash{$\Omega_n=(\frac{1}{n}X^T X,\frac{1}{\sqrt{n}}
  X^T Y)$}, note that \smash{$\E(\Omega_n)=(\frac{1}{n}X^T X,\frac{1}{\sqrt{n}}
  X^T \theta)$}.  As $v$ is assumed to be chosen such that $v^T \theta$ is a
normalized regression coefficient from the projection of $\theta$ onto some
subset of the columns in $X$, we may assume without a loss of generality that  
$v^T \theta$ is as in \eqref{eq:proj_coef} for some $A,j$.  Then, we see that
we must only define   
\begin{equation*}
g(S,z)=\frac{e_j^T (S_{A,A})^{-1} z_A} 
{\sqrt{e_j^T (S_{A,A})^{-1} e_j}}, 
\end{equation*}
where we use \smash{$S_{A,A}$} to denote the submatrix of $S$ with rows in $A$ 
and columns in $A$, and $z_A$ to denote the subvector of $z$ with entries in
$A$. 

\subsection{Proof of Lemma \ref{lem:master}}
\label{app:master}

Define \smash{$Z_{0,n}=\sum_{i=1}^n \xi_i$}, where
\smash{$\xi_i=\frac{1}{\sqrt{n}}x_i \epsilon_i$}, 
$x_i$ is the $i$th row of $X$, and $\epsilon_i=Y_i-\theta_i$, for 
$i=1,\ldots,n$. Note that $(\xi_1,\ldots,\xi_n) \sim F_n(0)$, with independent,
mean zero components. 
We compute
\begin{equation*}
\sum_{i=1}^n \Cov(\xi_i) = \frac{\sigma^2}{n}
\sum_{i=1}^n x_ix_i^T = \frac{\sigma^2}{n} X^T X,
\end{equation*}
which converges to $\sigma^2 \Sigma$ as $n\to\infty$, by assumption. 
Further, for any $\delta>0$, consider
\begin{equation*}
\sum_{i=1}^n \E \Big( \|\xi_i\|_2^2 \cdot 
1\{\|\xi_i\|_2 \geq \delta \} \Big) = 
\frac{1}{n} \sum_{i=1}^n \|x_i\|_2^2 \; \E \Bigg(  
\epsilon_i^2 \cdot 1\Bigg\{\frac{\|x_i\|_2}{\sqrt{n}}|\epsilon_i| 
  \geq \delta \Bigg\}  \Bigg).
\end{equation*}
We seek to show that this converges to 0 as $n\to\infty$.  As 
\smash{$\frac{1}{n}\sum_{i=1}^n \|x_i\|_2^2 \to \tr(\Sigma)$}, it
suffices to show that the maximum of the above expectations (in the 
summands) 
converges to 0, which is implied by the assumption that
\smash{$\max_{i=1,\ldots,n} \|x_i\|_2 / \sqrt{n} \to 0$.}  As the
above arguments did not depend on the sequence $F_n(0)$,
$n=1,2,3,\ldots$, we have verified the Lindeberg-Feller conditions 
uniformly, and hence the uniform Lindeberg-Feller central limit 
theorem, Lemma \ref{lem:clt}, implies that \smash{$Z_{0,n}$}
converges in distribution to $Z_0 \sim N(0,\sigma^2 \Sigma)$, uniformly over
$\cP_n(0)$. 

Now consider \smash{$Z_n =\frac{1}{\sqrt{n}} X^T Y = Z_{0,n}+\frac{1}{\sqrt{n}}
  X^T \theta$}. Writing $\Phi$ and $\phi$ for the standard normal CDF and
density, 
\begin{align*}
\sup_{\theta \in \Theta} \; \sup_{F_n(\theta) \in \cP_n(\theta)} \;
\sup_{x \in \R^d} \; &\big| \P(Z_n \leq x) - \P(Z \leq x)\big| \\
&= \sup_{\theta \in \Theta} \; \sup_{F_n(\theta) \in \cP_n(\theta)} \;
\sup_{x \in \R^d} \; \Bigg| \P\Bigg(Z_{0,n} \leq x-
\frac{1}{\sqrt{n}}X^T \theta \Bigg) - \P(Z \leq x) \Bigg| \\
&\leq \sup_{\theta \in \Theta} \; \sup_{F_n(\theta) \in \cP_n(\theta)} \;
\sup_{x \in \R^d} \; \big| \P(Z_{0,n} \leq x) - \P(Z_0 \leq x)\big| \;+\;  
\sup_{x \in \R^d} \;\Bigg|\Phi(x-\eta) - \Phi\Bigg(x-
\frac{1}{\sqrt{n}}X^T \theta\Bigg) \Bigg| \\
&\leq \underbrace{\sup_{\theta \in \Theta} \; 
  \sup_{F_n(\theta) \in \cP_n(\theta)} \; 
  \sup_{x \in \R^d} \; \big| \P(Z_{0,n} \leq x) - \P(Z_0 \leq
  x)\big|}_{a} \;+\;  
\underbrace{\vphantom{\sup_{x \in \R^d}}
\Bigg|\frac{1}{\sqrt{n}}X^T \theta-\eta\Bigg| \phi(0)}_{b}, 
\end{align*}
where the second line is due to the triangle inequality, and the third line is
due to the simple bound \smash{$|\Phi(x-t)-\Phi(x-s)|=|\int_{x-s}^{x-t}
  \phi(u)\,du| \leq |t-s| \phi(0)$}, for any $x,s,t$. Note that $a \to 0$ by the
argument at the start of this proof, and $b \to 0$ by assumption in
\eqref{eq:theta}.  This shows that $Z_n$ converges in distribution to $Z \sim
N(\eta,\sigma^2 \Sigma)$, uniformly over $\cP_n(\theta)$, and over $\theta \in
\Theta$.  

Lastly, we establish the conditional result.  By repeating the same arguments
as above, the uniform Lindeberg-Feller central limit theorem and condition 
\eqref{eq:theta} imply that \smash{$(Z_n, A_n Z_n)$} converges to $(Z,AZ)$,
uniformly over $\cP_n(\theta)$, and over $\theta \in \Theta$.  Thus, along 
sequence $F_n(\theta) \in \cP_n(\theta)$, $n=1,2,3,\ldots$ with $\theta \in
\Theta$, observe
\begin{equation*}
\P( Z_n \leq x \, | \, A_nZ_n \geq 0) = 
\frac{\P( Z_n \leq x, \, A_nZ_n \geq 0)}{\P(A_nZ_n \geq 0)} 
\to \frac{\P( Z \leq x, \, AZ \geq 0)}{\P(AZ \geq 0)},
\end{equation*}
at a rate that does not depend on the sequence in question.  This 
is true because the numerator and denominator each converge to their normal 
probability counterparts, and the denominator remains bounded away from zero 
since $\{z:Az \geq 0\}$ has nonempty interior, and the set of limits of
\smash{$\frac{1}{\sqrt{n}} X^T \theta$} was assumed compact, in
\eqref{eq:theta}.  Since $x$ was arbitrary, and the distribution of $Z \,|\, AZ 
\geq 0$ is continuous, we have (e.g., Lemma 2.11 in
\citet{vandervaart1998asymptotic}) 
\begin{equation*}
\sup_{x \in \R^d} \; \Big| \P( Z_n \leq x \, | \, A_nZ_n \geq 0) -
\P( Z \leq x \, | \, AZ \geq 0) \Big| \to 0.
\end{equation*}
And as the sequence $F_n(\theta) \in \cP_n(\theta)$, $n=1,2,3,\ldots$ with
$\theta \in \Theta$ was arbitrary, we have shown the desired uniform
convergence.     

\subsection{Proof of Theorem \ref{thm:pivot}}
\label{app:pivot}

We begin with the proof of part (a).   Let
\smash{$Z_n=\frac{1}{\sqrt{n}} X^T Y$} and $Z \sim N(\eta,\sigma^2 \Sigma)$.
Also, let \smash{$A_n=P_M(\frac{1}{n}X^T X)$} and \smash{$A=P_M(\Sigma)$}.
Recall that \smash{$A_nZ_n \geq 0 \iff \hat{M}(X,Y)=M$},
by Lemma \ref{lem:master1}. Also, 
\smash{$Z_n \,|\, A_nZ_n \geq 0$} converges weakly
to $Z \,|\, AZ \geq 0$, uniformly over $\cP_n(\theta)$ and over  
$\theta \in \Theta$, by Lemma \ref{lem:master}. As \smash{$\frac{1}{n}X^T X \to
  \Sigma$} deterministically, we also have that 
\smash{$\Omega_n=(\frac{1}{n}X^T X,Z_n)$} converges uniformly in distribution to
$\Omega=(\Sigma, Z)$.    

The choice of $v$ as specified in the 
theorem is now important for two reasons. First, by Lemma
\ref{lem:master2}, we can express 
\begin{equation*}
T(X,Y; M,v,\mu) = \psi_M (\Omega_n), 
\end{equation*}
for a function $\psi_M$. Second, by Lemma \ref{lem:master3}, 
we can express $v^T\theta = g(\E(\Omega_n))$ for a function $g$.    
Neither $\psi_M$ nor $g$ depend on $n$, and the distribution in 
question is that of $\psi_M (\Omega_n) \,|\, A_nZ_n \geq 0$ 
under $g(\E(\Omega_n))=\mu$.  The function $\psi_M$ is continuous at any point
$(S,z)$ such that $S$ is nonsingular and $Az>0$; recalling the assumed
nonsingularity of $\Sigma$, it is therefore continuous on a set of full
probability under the limiting distribution $\cL(\Omega \,|\, AZ \geq 0)$. By 
the uniform continuous mapping theorem, Lemma \ref{lem:cmt}, $\psi_M(\Omega_n) 
\,|\, A_nZ_n \geq 0$ converges uniformly to $\psi(\Omega) \,|\, AZ \geq 0$,
which is distributed as $U(0,1)$ when $g(\E(\Omega))=\mu$ by the pivotal
property of the TG statistic under normality, as in \eqref{eq:cond_pivot}.  
The proof of uniform validity of TG confidence intervals is
just a rearrangement of the uniform asymptotic pivotal statement. 

The proof of part (b) follows from the expansion 
\begin{equation*}
\cT(X,Y;V,U) = \sum_{M \in \cM} T(X,Y;M,v_M,\mu_M) \, 1\{ \hat{M}(X,Y)=M \}. 
\end{equation*}
As the number possible models $|\cM|$ is finite, we can simply 
apply the asymptotic pivotal result from part (a) to each $M \in \cM$ 
to establish the asymptotic pivotal property of $\cT(X,Y;V,U)$.  The confidence
interval result is again just a rearrangement of this pivotal property.


\subsection{Proof of Lemma \ref{lem:vthree}}
\label{app:vthree}

By assumption, the vector $v$ can be written as
\begin{equation*}
v = \frac{X_A(X_A^T X_A)^{-1} e_j}
{\sqrt{e_j^T (X_A^T X_A)^{-1} e_j}}, 
\end{equation*}
for some $A,j$.  We compute
\begin{align*}
\|v\|_3^3 &= \frac{\sum_{i=1}^n |X_{i,A}(X_A^T X_A)^{-1} e_j|^3}
{|e_j^T (X_A^T X_A)^{-1} e_j|^{3/2}} \\
&= \frac{\frac{1}{n^{3/2}} \sum_{i=1}^n |X_{i,A}n(X_A^T X_A)^{-1} e_j|^3}  
{|e_j^T n (X_A^T X_A)^{-1} e_j|^{3/2}}.
\end{align*}
The denominator converges to $|e_j^T(\Sigma_{A,A})^{-1}e_j|^{3/2}$ by 
\eqref{eq:xcov}.  The numerator satisfies
\begin{equation*}
\frac{1}{n^{3/2}} \sum_{i=1}^n |X_{i,A}n(X_A^T X_A)^{-1} e_j|^3 \leq 
\frac{1}{\sqrt{n}} \;\cdot\;
\underbrace{\frac{1}{n}\sum_{i=1}^n \|x_i\|_2^3}_a \;\cdot\;
\underbrace{\vphantom{\frac{1}{n}\sum_{i=1}^n}
\|n(X_A^T X_A)^{-1} e_j\|_2^3}_b,  
\end{equation*}
where $a$ is bounded by \eqref{eq:xthree} and $b$ converges to 
$\|(\Sigma_{A,A})^{-1} e_j\|_2^3$ by \eqref{eq:xcov}.  This completes
the proof. 

\subsection{Proof of Lemma \ref{lem:rysy}}
\label{app:rysy}

We start by proving the result about the event $\{cs_Y \geq \sigma\}$.  
First let us study its asymptotic probability marginally. Consider  
\begin{align*}
\E(s_Y^2) &= \frac{1}{n} \sum_{i=1}^n \E|\epsilon_i + \theta_i -
\bar\epsilon - \bar\theta|^2 \\
&= \frac{1}{n} \sum_{i=1}^n \E|\epsilon_i -\bar\epsilon|^2 + 
\frac{1}{n} \sum_{i=1}^n |\theta_i - \bar\theta|^2 + 
\frac{2}{n} \sum_{i=1}^n
\E(\epsilon_i-\bar\epsilon)(\theta_i-\bar\theta) \\
&= \frac{n-1}{n} \sigma^2 + s_\theta^2,
\end{align*}
where \smash{$\bar\epsilon=\sum_{i=1}^n \epsilon_i/n$}. 
Hence
\begin{align*}
\P\big( c s_Y \leq \sigma \big) 
&= \P\big(c^2 s_Y^2 -c^2\E(s_Y^2) \leq 
\sigma^2-c^2\E(s_Y^2)\big) \\
& \leq \frac{c^4\Var(s_Y^2)}
{(c^2\E(s_Y^2)-\sigma^2)^2},
\end{align*}
where in the last line we used Chebyshev's
inequality. Recalling that $c^2>1$, we have the lower bound 
$(c^2 \E(s_Y^2)-\sigma^2)^2 \geq 0.999(c^2-1)^2 \sigma^4$, for $n$
large enough.  Therefore, to show $\P(c s_Y \geq \sigma) \to 1$, it is enough to
show that $\Var(s_Y^2) \to 0$ as $n \to \infty$, uniformly.  For this, we will
use the simple inequality 
\begin{equation}
\label{eq:cov_ineq}
\Var(W_1+\ldots+W_m) \leq m \sum_{i=1}^m \Var(W_i),
\end{equation}
which follows from the fact that 
$2\Cov(W_i,W_j) \leq \Var(W_i)+\Var(W_j)$.  We will also invoke
Rosenthal's inequality \citep{rosenthal1970subspaces}, which for
independent $W_1,\ldots,W_m$, having mean zero and $\E|W_i|^t <
\infty$ for $i=1,\ldots,m$, states that  
\begin{equation}
\label{eq:rosenthal}
\E \Bigg|\sum_{i=1}^m W_i \Bigg|^t \leq C_t  
\max\Bigg\{ \sum_{i=1}^m \E|W_i|^t, \; \Bigg(\sum_{i=1}^m \E W_i^2
\Bigg)^{t/2} \Bigg\},
\end{equation}
for a constant $C_t>0$ only depending on $t$.  Hence, observe that
\begin{align*}
\Var(s_Y^2) &= \Var\Bigg(\frac{1}{n} \sum_{i=1}^n |\epsilon_i +
\theta_i - \bar\epsilon - \bar\theta|^2\Bigg) \\ 
&=\Var\Bigg(\frac{1}{n} \sum_{i=1}^n
|\epsilon_i + \theta_i - \bar\theta|^2 + 
\bar\epsilon^2 - \frac{2}{n} \sum_{i=1}^n (\epsilon_i + 
\theta_i - \bar\theta) \bar\epsilon \Bigg) \\ 
&= \Var\Bigg(\frac{1}{n} \sum_{i=1}^n
|\epsilon_i + \theta_i - \bar\theta|^2  
- \bar\epsilon^2 \Bigg) \\
&\leq \underbrace{2 \Var\Bigg(\frac{1}{n} \sum_{i=1}^n 
|\epsilon_i + \theta_i - \bar\theta|^2\Bigg)}_a + 
\underbrace{\vphantom{\Bigg(\frac{1}{n}\sum_{i=1}^n\Bigg|}  
2 \Var(\bar\epsilon^2)}_b,
\end{align*}
where in the last line we used \eqref{eq:cov_ineq}. We consider 
$a,b$ individually. We have
\begin{align*}
a &= \frac{2}{n^2} \Var\Bigg( \sum_{i=1}^n \epsilon_i^2 + 
\sum_{i=1}^n |\theta_i-\bar\theta|^2 + 2 \sum_{i=1}^n
\epsilon_i(\theta_i-\bar\theta) \Bigg) \\
&\leq \frac{6}{n^2} \Var\Bigg(\sum_{i=1}^n \epsilon_i^2\Bigg) + 
\frac{12}{n^2} \Var\Bigg( \sum_{i=1}^n 
\epsilon_i(\theta_i-\bar\theta)\Bigg) \\
&\leq \frac{6}{n} \kappa + \frac{12}{n} \sigma^2 s_\theta^2 
\to 0,
\end{align*}
where the second line again used \eqref{eq:cov_ineq}, and the third
used our assumptions on the error distribution in \eqref{eq:pn2}, and on
$\theta$ in \eqref{eq:theta2b}.  We also have
\begin{align*}
b &= \frac{2}{n^4} \E\Bigg|\sum_{i=1}^n \epsilon_i\Bigg|^4 \\
&\leq \frac{2}{n^4} C_4 
\max\{n \kappa, n^2 \sigma^4\} \to 0,
\end{align*}
where the second line used Rosenthal's inequality
\eqref{eq:rosenthal}. 
This implies $\Var(s_Y^2) \leq a+b \to 0$, uniformly over
$\cP'_n(\theta)$, and over $\theta \in \Theta'$. 

We have therefore shown $\P(c s_Y \geq \sigma) \to 1$, uniformly over
$\cP'_n(\theta)$, and over $\theta \in \Theta'$.  To see that the same result 
holds conditional on \smash{$\hat{M}(X,Y)=M$}, take any sequence
$F_n(\theta) \in \cP_n(\theta)$, $n=1,2,3,\ldots$ where $\theta \in \Theta'$,
and note that 
\begin{align*}
\P\Big(c s_Y \geq \sigma \,\Big|\, \hat{M}(X,Y)=M\Big) &=
\frac{ \P(c s_Y \geq \sigma \,, A_nZ_n \geq 0)}{\P(A_nZ_n \geq 0)} \\
&\geq \frac{ \P(A_nZ_n \geq 0) - \P(c s_Y < \sigma)}{\P(A_nZ_n \geq 0)} \\
&\to \frac{ \P(AZ \geq 0) - 0}{\P(AZ \geq 0)} = 1,
\end{align*}
where we have borrowed the notation and the normal convergence result 
$\P(A_nZ_n \geq 0) \to \P(AZ \geq 0)$ from the proof  
of Lemma \ref{lem:master3}. The rate of convergence in the last line does not
depend on the sequence in consideration, because of the uniform
convergence of $A_nZ_n$ to $AZ$, and the fact the denominator is bounded away 
from zero, since the set of limits of \smash{$\frac{1}{\sqrt{n}} X^T \theta$} is
assumed to be compact, in \eqref{eq:theta2a}. And as $F_n(\theta) 
\in \cP_n(\theta)$, $n=1,2,3,\ldots$ with $\theta \in \Theta'$ was arbitrary,
this completes the proof of the first part of the lemma.   

For the second part, on the boundedness of \smash{$r_Y^3/s_Y^3$}, consider that
for any $C>0$ we have
\begin{align*}
\P \Bigg( \frac{r_Y^3}{s_Y^3} < C \,\Bigg|\, \hat{M}(X,Y)=M \Bigg)  
&\geq \P \Big( r_Y^3 < \sigma^3C/c^3, \, s_Y^3 \geq
\sigma^3/c^3 \,\Big|\, \hat{M}(X,Y)=M \Big) \\ 
&\geq 1- \P \Big( r_Y^3 \geq \sigma^3C/c^3 \,\Big|\, \hat{M}(X,Y)=M \Big) -   
\P \Big(s_Y^3 < \sigma^3/c^3 \,\Big|\, \hat{M}(X,Y)=M \Big). 
\end{align*}
The last term here satisfies 
\smash{$\P(s_Y^3 < \sigma^3/c^3 \,|\, \hat{M}(X,Y)=M) \to 0$}, uniformly, by
what we showed above. It suffices to prove that, for any $\delta>0$,
there exists $C>0$ such that \smash{$\P ( r_Y^3 > c^3C/\sigma^3 \,|\, 
  \hat{M}(X,Y)=M) \leq \delta$} for large enough $n$, uniformly.  By Markov's 
inequality, this will be true as long as $\E (r_Y^3 \,|\, \hat{M}(X,Y)=M)$ is
uniformly bounded. To this end, we will use the simple inequality,
\begin{equation}
\label{eq:pow_ineq}
|a+b|^t \leq 2^t |a|^t + 2^t |b|^t,
\end{equation}
and compute
\begin{align}
\nonumber
\E\Big(r_Y^3 \Big|\, \hat{M}(X,Y)=M \Big) &= \frac{1}{n} \E 
\Bigg(\sum_{i=1}^n |\epsilon_i+\theta_i-\bar\epsilon-\bar\theta|^3
 \,\Bigg|\,\hat{M}(X,Y)=M \Bigg) \\ 
\nonumber
&\leq \frac{2^3}{n} \E \Bigg(\sum_{i=1}^n
|\epsilon_i-\bar\epsilon|^3 \,\Bigg|\,\hat{M}(X,Y)=M \Bigg) + 
2^3 r_\theta^3 \\
\nonumber
&\leq \frac{2^6}{n} \E \Bigg(\sum_{i=1}^n
|\epsilon_i|^3 \,\Bigg|\,\hat{M}(X,Y)=M \Bigg) +
  2^6 \E\Big(|\bar\epsilon|^3 \,\Big|\,\hat{M}(X,Y)=M \Big) + 
  2^3  r_\theta^3 \\
\label{eq:ry3}
&\leq 2^6 \tau_M + \frac{2^6}{n^3} C_3  \max\{
n\tau_M, n^{2/3} \sigma_M^3 \} + 2^3 r_\theta^3,
\end{align}
where the second and third lines used \eqref{eq:pow_ineq}, and the
last line used Rosenthal's inequality \eqref{eq:rosenthal}, along with the
abbreviations 
\begin{equation*}
\tau_M = \frac{1}{n} \E \Bigg(\sum_{i=1}^n
|\epsilon_i|^3 \,\Bigg|\,\hat{M}(X,Y)=M \Bigg), \;\;\;\text{and}\;\;\;
\sigma^2_M = \frac{1}{n} \E \Bigg(\sum_{i=1}^n
|\epsilon_i|^2 \,\Bigg|\,\hat{M}(X,Y)=M \Bigg).
\end{equation*}
Once again using \smash{$A_nZ_n \geq 0 \iff \hat{M}(X,Y)=M$} and  
the uniform convergence $\P(A_nZ_n \geq 0) \to \P(AZ \geq 0)$ from Lemma
\ref{lem:master3}, we have for large enough $n$, 
\begin{equation*}
\tau_M \leq \frac{\E |\epsilon_1|^3}{\P(A_nZ_n \geq 0)} \leq \frac{\tau}{\P(AZ
  \geq 0)/2} \leq \frac{\tau}{\rho/2},
\end{equation*}
where we have used the upper bound on the third moment of the error
distribution in \eqref{eq:pn2}, and we have used a lower bound $\P(AZ \geq 0)
\geq \rho > 0$ that holds uniformly over all $\theta \in \Theta'$, due to the
assumed compactness of the set of limits of \smash{$\frac{1}{\sqrt{n}} X^T
  \theta$}, in \eqref{eq:theta2a}.  Thus we have shown that $\tau_M$ is
uniformly upper bounded.  Similar arguments show that $\sigma_M$ is uniformly 
upper bounded.  As $r_\theta^3 \leq R$ by assumption in \eqref{eq:theta2b}, we
see from \eqref{eq:ry3} that \smash{$\E (r_Y^3 \,|\, \hat{M}(X,Y)=M)$} is
uniformly upper bounded. This completes the proof of the second part, and the
lemma.  

\subsection{Proof of Lemma \ref{lem:basic}}
\label{app:basic}

Let us write 
\begin{equation*}
\frac{v^T (Y^* - \bar{Y}\mathbb{1})} 
{s_Y} = \sum_{i=1}^n \xi_i,
\end{equation*}
where $\xi_1,\ldots,\xi_n$ are independent with mean zero and
$\sum_{i=1}^n \Var_*(\xi_i)=1$.  By Theorem 3.7 of
\citet{chen2011normal},  
\begin{equation*}
\sup_{t \in \R} \; \big|
\P_*\big( \sum_{i=1}^n \xi_i \leq t \big) 
- \P \big(Z \leq  t\, \big| \, Y \big) \big| 
\leq 10 \sum_{i=1}^n \E_* |\xi_i|^3.  
\end{equation*}
But the right-hand side is precisely
\begin{equation*}
10 \sum_{i=1}^n \E_* |\xi_i|^3 = 10
\frac{r_Y^3}{s_Y^3}  \|v\|_3^3. 
\end{equation*}
Lemmas \ref{lem:vthree} and \ref{lem:rysy} imply that this
is \smash{$O_\P(1/\sqrt{n})$} conditional on \smash{$\hat{M}(X,Y)=M$}, uniformly
over $\cP'_n(\theta)$, and over $\theta \in \Theta'$, giving the result.       

\subsection{Proof of Theorem \ref{thm:unknown}}
\label{app:unknown}

First, we prove the result for the plug-in statistic.  Denoting 
$Z \sim N(0,1)$, we have
\begin{equation*}
\tilde{T}(X,Y;M,v,0) = \P\Big(c s_Y  Z \geq v^T Y 
\,\Big|\, \hat{a}_M \leq c s_Y  Z \leq \hat{b}_M, \, Y \Big).
\end{equation*}
Consider the event $\{cs_Y \geq \sigma\}$, which has probability approaching
1 conditional on \smash{$\hat{M}(X,Y)=M$}, uniformly over $\cP'_n(\theta)$, and 
over $\theta \in \Theta'$, by Lemma \ref{lem:rysy}.
On this event, by the monotonicity of the truncated Gaussian survival function
in its variance parameter, shown in Appendix 
\ref{app:trun_sigma}, we can replace $cs_Y$ by $\sigma$, and this 
cannot increase the value of the statistic. (To verify that the result in
Appendix \ref{app:trun_sigma} can indeed be applied, notice that 
\smash{$\hat{a}_M \geq 0$}, i.e., the left endpoint of the interval is
at least the mean of the truncated Gaussian, which follows from the fact that
\smash{$v^T Y \geq 0$} by design.) Thus we can write    
\begin{equation*}
\tilde{T}(X,Y;M,v,0) = \P\Big(\sigma Z \geq v^T Y 
\,\Big|\, \hat{a}_M \leq \sigma  Z \leq \hat{b}_M, \, Y \Big) + E_n,
\end{equation*}
where $\P(E_n<0 \,|\, \hat{M}(X,Y)=M) \to 0$, uniformly over $\cP'_n(\theta)$,
and over $\theta \in \Theta'$.  Hence, for any $t \in [0,1]$, 
\begin{equation*}
\P_{v^T \theta = 0} \Big( \tilde{T}(X,Y;M,v,0) \leq t 
\,\Big|\, \hat{M}(X,Y)=M \Big) \leq 
\P_{v^T \theta = 0} \Big( T(X,Y; M,v, 0) \leq t 
\,\Big|\, \hat{M}(X,Y)=M \Big) + o(1),
\end{equation*}
where the $o(1)$ remainder term above is uniform over $t \in [0,1]$, 
over $\cP'_n(\theta)$, and over $\theta \in \Theta'$. Applying part (a) of
Theorem \ref{thm:pivot} proves the conditional result for the plug-in statistic.   

Next, we turn to the bootstrap result, whose proof is a little more    
involved.  Define a function 
\begin{equation*}
G^*(z) = \frac{\P_*\big(z \leq c v^T (Y^* - \bar{Y}\mathbb{1})     
  \leq \hat{b}_M\big) + \delta_n}   
{\P_*\big(\hat{a}_M \leq c  v^T (Y^* - \bar{Y}\mathbb{1})     
  \leq \hat{b}_M\big) + \delta_n} \cdot
1\big\{ \hat{a}_M \leq z \leq \hat{b}_M \big\}.
\end{equation*}
Lemma \ref{lem:basic} implies that we can write
\begin{equation*}
G^*(z) = \frac{\P\big(z \leq c s_Y  Z   
  \leq \hat{b}_M \,\big|\, Y \big) + E_n + \delta_n}   
{\P\big(\hat{a}_M \leq c s_Y  Z  
  \leq \hat{b}_M \,\big|\, Y \big) + E'_n + \delta_n} \cdot 
1\big\{ \hat{a}_M \leq z \leq \hat{b}_M \big\},
\end{equation*}
where $|E_n|,|E_n'|=O_\P(1/\sqrt{n})$ conditional on \smash{$\hat{M}(X,Y)=M$},
uniformly over $z \in \R$, over $\cP'_n(\theta)$, and over $\theta \in \Theta'$.
(Note that $c$ in the above can be absorbed into the
role of $t$ in the lemma.) Dividing through by the quantity 
\smash{$\P(\hat{a}_M \leq cs_Y  Z \leq \hat{b}_M \,|\, Y)+
  \delta_n$}, we have  
\begin{align*}
\nonumber
G^*(z) &= \frac{\displaystyle
\frac{\P\big(z \leq c s_Y  Z   
  \leq \hat{b}_M \,\big|\, Y \big) + \delta_n}
{\P\big(\hat{a}_M \leq c s_Y  Z  
  \leq \hat{b}_M \,\big|\, Y \big) + \delta_n} 
+ \frac{E_n}
{\P\big(\hat{a}_M \leq c s_Y  Z  
  \leq \hat{b}_M \,\big|\, Y \big) + \delta_n}}
{\displaystyle 1 + \frac{E_n'}
{\P\big(\hat{a}_M \leq c s_Y  Z  
  \leq \hat{b}_M \,\big|\, Y \big) + \delta_n}} \cdot
1\big\{ \hat{a}_M \leq z \leq \hat{b}_M \big\} \\
&= \frac{\P\big(z \leq c s_Y  Z   
  \leq \hat{b}_M \,\big|\, Y \big) + \delta_n}
{\P\big(\hat{a}_M \leq c s_Y  Z  
  \leq \hat{b}_M \,\big|\, Y \big) + \delta_n} \cdot
1\big\{ \hat{a}_M \leq z \leq \hat{b}_M \big\} + e_n \\
&\geq \P\Big( cs_Y  Z  \geq z \, \Big| \, 
\hat{a}_M \leq cs_Y  Z  \leq \hat{b}_M, \, Y \Big) \cdot   
1\big\{ \hat{a}_M \leq z \leq \hat{b}_M \big\} + e_n \\ 
&\geq \P \Big( \sigma  Z  \geq z \, \Big| \, 
\hat{a}_M \leq \sigma  Z  \leq \hat{b}_M, \, Y \Big) \cdot  
1\big\{ \hat{a}_M \leq z \leq \hat{b}_M \big\} + e_n, 
\end{align*}
where $|e_n|=o_\P(1)$ conditional on \smash{$\hat{M}(X,Y)=M$}, uniformly 
over $z \in \R$, over $\cP'_n(\theta)$, and over $\theta \in \Theta'$, but  
the precise value of $e_n$ may differ from line to line.  Above, in the second
line, we used $E_n/\delta_n=o_\P(1)$ conditional on \smash{$\hat{M}(X,Y)=M$}, 
uniformly, and similarly for $E_n'$; in the third line, we used the fact that 
$(p+\delta)/(q+\delta) \geq p/q$ for $0< p\leq q$ and $\delta \geq 0$; in the
last line, we have used, as before, the monotonicity of the truncated Gaussian
survival function in its underlying variance parameter, and the fact that
\smash{$\P(c s_Y \geq \sigma \,|\, \hat{M}(X,Y)=M) \to 1$}, uniformly.

Rewriting the result in the last display, we have
\begin{equation*}
\sup_{z \in \R} \;
\Bigg[G^*(z) -  \P \Big( \sigma  Z  \geq z \, \Big| \, 
\hat{a}_M \leq \sigma  Z  \leq \hat{b}_M, \, Y \Big) \cdot  
1\big\{ \hat{a}_M \leq z \leq \hat{b}_M \big\} \Bigg]_- \leq \; 
|e_n|,
\end{equation*}
where $x_-=\max\{0,-x\}$ denotes the negative part of $x$.
In particular, at \smash{$z=v^T Y$}, this implies 
\begin{equation*}
\Bigg[T^*(X,Y; M,v, 0) - T(X,Y; M,v, 0)\Bigg]_- \leq 
\; |e_n|.
\end{equation*}
Finally, this means that we can write, at an arbitrary level $t \in
[0,1]$, 
\begin{equation*}
\P_{v^T \theta = 0} \Big( T^*(X,Y; M,v,0) \leq t 
\,\Big|\, \hat{M}(X,Y)=M \Big) = 
\P_{v^T \theta = 0} \Big( T(X,Y; M,v,0) \leq t - E_n''
\,\Big|\, \hat{M}(X,Y)=M \Big),
\end{equation*}
where $(E''_n)_- =o_\P(1)$ conditional on \smash{$\hat{M}(X,Y)=M$}, uniformly
over $t \in [0,1]$, over $\cP'_n(\theta)$, and over $\theta \in \Theta'$. 
Therefore 
\begin{equation*}
\P_{v^T \theta = 0} \Big( T^*(X,Y; M,v,0) \leq t 
\,\Big|\, \hat{M}(X,Y)=M \Big) \leq 
\P_{v^T \theta = 0} \Big( T(X,Y; M,v,0) \leq t
\,\Big|\, \hat{M}(X,Y)=M \Big) + o(1),
\end{equation*}
where the $o(1)$ term above is uniform over $t \in [0,1]$, over 
$\cP'_n(\theta)$, and over $\theta \in \Theta'$.  Applying
part (a) of Theorem \ref{thm:pivot} proves the conditional result for bootstrap
statistic.  

The unconditional results for two modified TG statistics hold simply 
by marginalization.

\subsection{Monotonicity of the truncated Gaussian distribution 
in $\sigma^2$} 
\label{app:trun_sigma}

Define 
\begin{equation*}
\bar{F}_{0,\sigma^2}^{[a,b]}(x) =
\frac{\Phi(b/\sigma)-
\Phi(x/\sigma)}
{\Phi(b/\sigma)-
\Phi(a/\sigma)}, 
\end{equation*}
the survival function for a normal random variable $Z \sim
N(0,\sigma^2)$, truncated to lie in an interval $[a,b]$, where 
$a \geq 0$.  We will show, following the proof of a similar
monotonicity result in Lemma A.1 of \citet{lee2016exact}, that for 
any $0<\sigma_1^2 < \sigma_2^2$,    
\begin{equation*} 
\bar{F}_{0,\sigma_1^2}^{[a,b]}(x) < \bar{F}_{0,\sigma_2^2}^{[a,b]}(x)
\;\;\; \text{for all $x \in [a,b]$}.
\end{equation*} 
To emphasize, the above property is only true when
the interval $[a,b]$ lies to the right of 0. Without this restriction,
the survival function will not be monotone increasing in
$\sigma^2$ (if $[a,b]$ contains 0, then it will generally be
nonmonotone, and if $[a,b]$ lies to the left of 0, then it will
actually be monotone {\it decreasing}).  

Over $\sigma^2>0$, the family of
distributions \smash{$\bar{F}_{0,\sigma^2}^{[a,b]}$} forms an
exponential family with natural parameter $1/\sigma^2$, as it is just
a family of Gaussian distributions with the carrier measure changed. 
Therefore, it has a monotone likelihood ratio in its sufficient
statistic $-x^2$, i.e., if we denote by 
\smash{$f_{0,\sigma^2}^{[a,b]}$} the truncated Gaussian density
function, and we fix $\sigma_1^2 < \sigma_2^2$, and  
$a \leq x_1<x_2 \leq b$, then
\begin{equation*}
\frac{f_{0,\sigma_1^2}^{[a,b]}(x_2)}{f_{0,\sigma_2^2}^{[a,b]}(x_2)} <  
\frac{f_{0,\sigma_1^2}^{[a,b]}(x_1)}{f_{0,\sigma_2^2}^{[a,b]}(x_1)}.
\end{equation*}
Hence 
\begin{equation*}
f_{0,\sigma_1^2}^{[a,b]}(x_2) \, f_{0,\sigma_2^2}^{[a,b]}(x_1) < 
f_{0,\sigma_1^2}^{[a,b]}(x_1) \, f_{0,\sigma_2^2}^{[a,b]}(x_2),
\end{equation*}
Integrating with respect to $x_1$ over $[a,x)$, for some
$x<x_2$, we obtain
\begin{equation*}
f_{0,\sigma_1^2}^{[a,b]}(x_2)\, 
\Big(1-\bar{F}_{0,\sigma_2^2}^{[a,b]}(x)\Big) <   
\Big(1-\bar{F}_{0,\sigma_1^2}^{[a,b]}(x)\Big)\,
f_{0,\sigma_2^2}^{[a,b]}(x_2).
\end{equation*}
Now integrating with respect to $x_2$, over $(x,b]$, we obtain
\begin{equation*}
\bar{F}_{0,\sigma_1^2}^{[a,b]}(x)\,
\Big(1-\bar{F}_{0,\sigma_2^2}^{[a,b]}(x)\Big) < 
\Big(1-\bar{F}_{0,\sigma_1^2}^{[a,b]}(x)\Big)\,
\bar{F}_{0,\sigma_2^2}^{[a,b]}(x).
\end{equation*}
Rearranging gives the result.

\subsection{P-value examples for correlated predictors}
\label{app:pval_ex_cor}

Here we investigate the consequences of using correlated predictors in
the simulation setup of Section \ref{sec:pval_ex}.   
We constructed a preliminary matrix 
\smash{$X \in \R^{50 \times 10}$} as before: each column was
drawn independently to have either i.i.d.\ $N(0,1)$,
$\mathrm{Bern}(0.5)$, or $SN(0,1,5)$ entries, with equal
probability. We then took as our predictor matrix \smash{$X' = X
  \Sigma^{1/2}$}, where $\Sigma \in \R^{10\times 10}$ has all diagonal  
entries equal to 1 and all off-diagonal entries equal to 0.5 (and 
$\Sigma^{1/2}$ is its symmetric square root). We scaled the
columns of $X'$ to have unit norm.  The rest of the setup is then just 
as in Section \ref{sec:pval_ex}.

Figure \ref{fig:lo_cor} shows the
results, in the same format as Figure \ref{fig:lo}: p-values for
LAR steps 1, 2, and 3, and pivotal statistics aggregated over LAR
steps, from 500 repetitions.  The p-values at steps 1 and 2 were
restricted to repetitions in which either variable 1 or 2 were
selected (now comprising about 70\% and 60\% of the repetitions, 
respectively); the 
p-values at step 3 were restricted to repetitions in which one of
variables 3 through 10 was selected (comprising about 80\% of the 
repetitions). Similar to the display in Figure \ref{fig:lo}, we
see power in the p-values from steps 1 and 2, albeit less power
than in the uncorrelated case, and uniform p-values in step 3, as
well as uniform pivotal statistics.

\begin{figure}[p]
\begin{subfigure}[b]{\textwidth}
\centering
Step 1, p-values \smallskip \\
\includegraphics[width=0.24\textwidth]{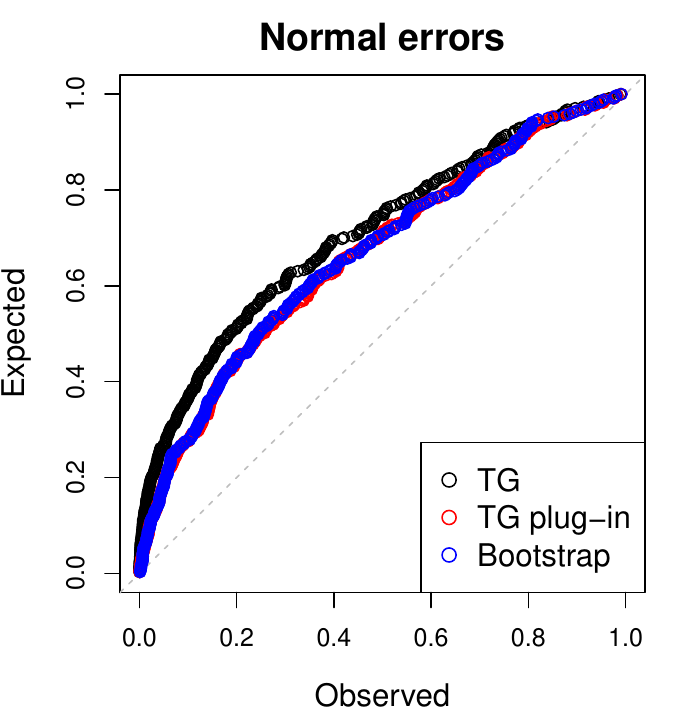}
\includegraphics[width=0.24\textwidth]{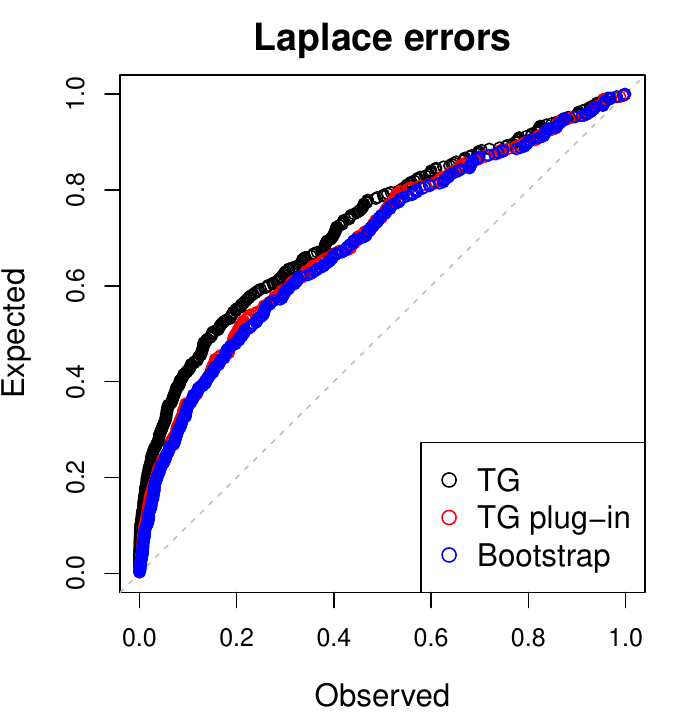}
\includegraphics[width=0.24\textwidth]{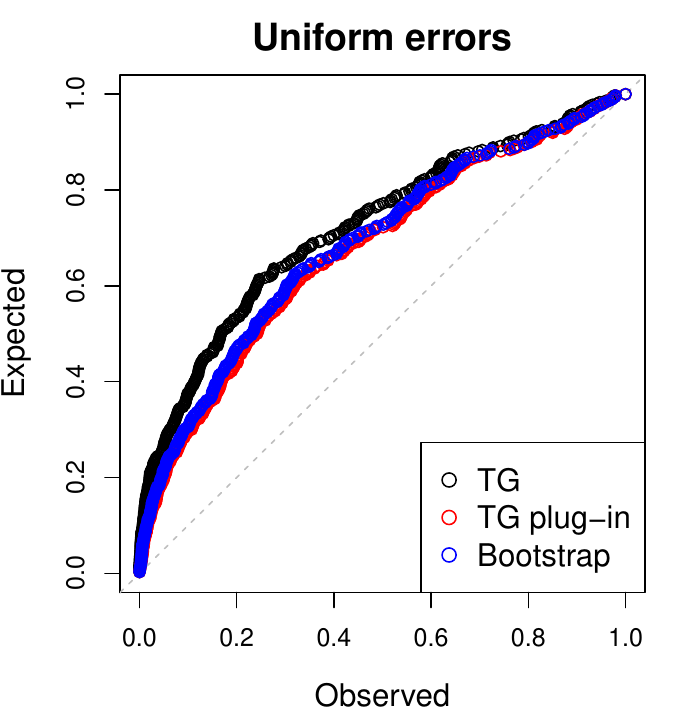}
\includegraphics[width=0.24\textwidth]{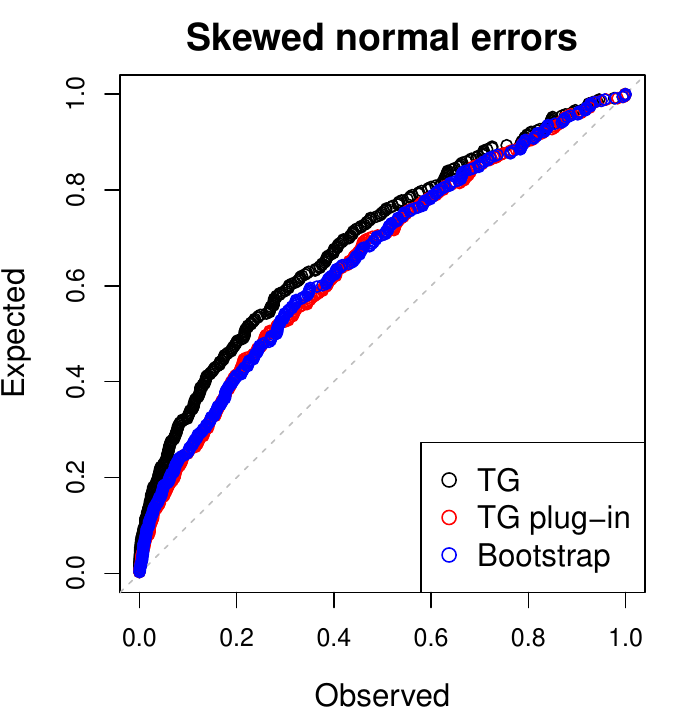} 
\smallskip \\
Step 2, p-values \smallskip \\
\includegraphics[width=0.24\textwidth]{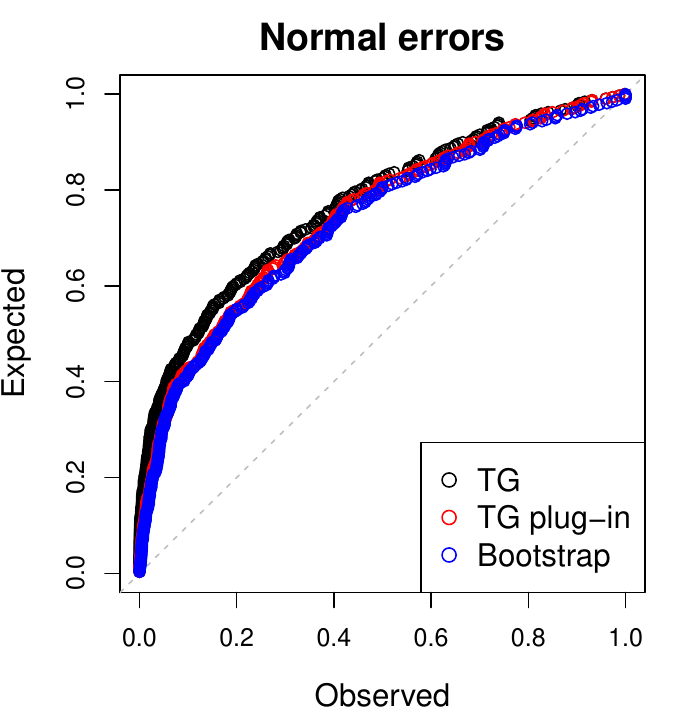}
\includegraphics[width=0.24\textwidth]{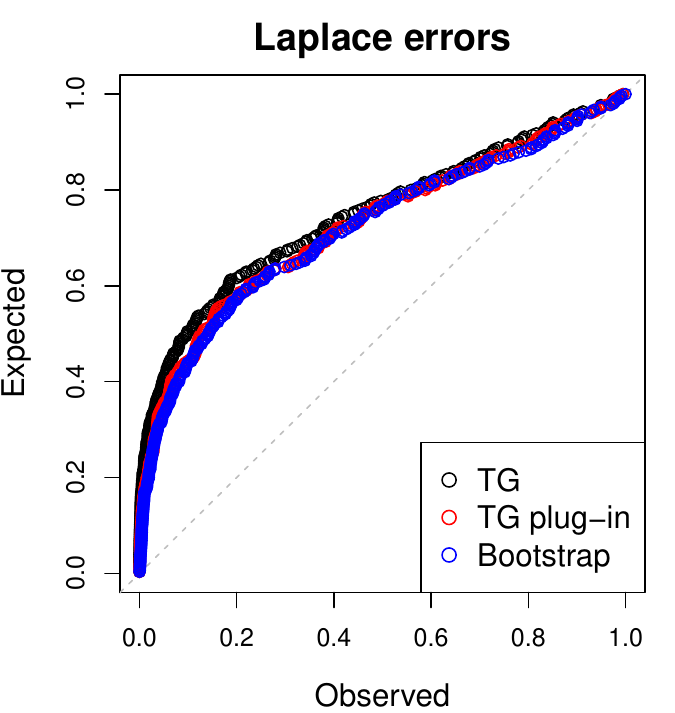} 
\includegraphics[width=0.24\textwidth]{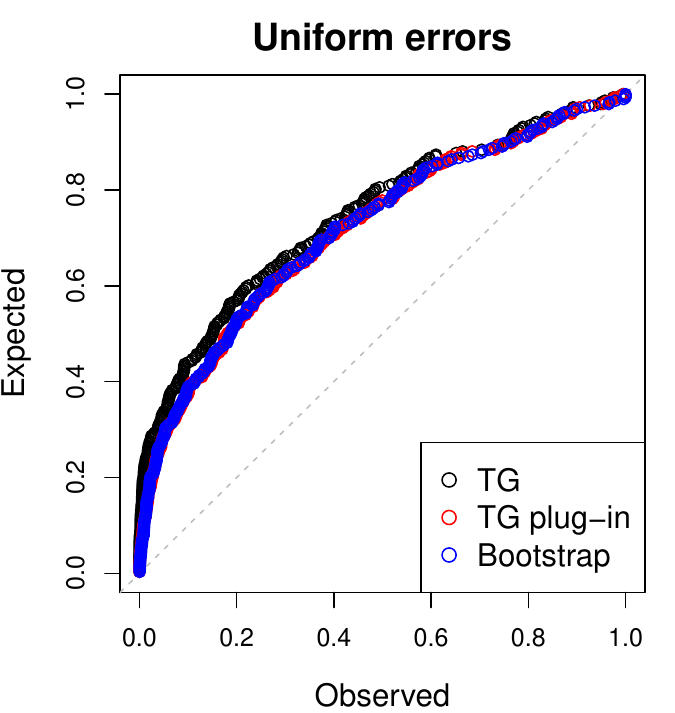} 
\includegraphics[width=0.24\textwidth]{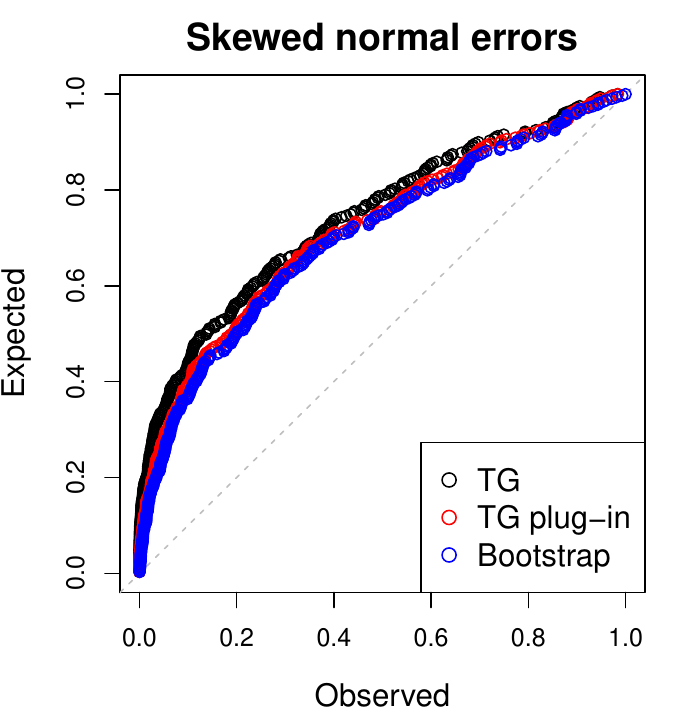} 
\smallskip \\
Step 3, p-values \smallskip \\ 
\includegraphics[width=0.24\textwidth]{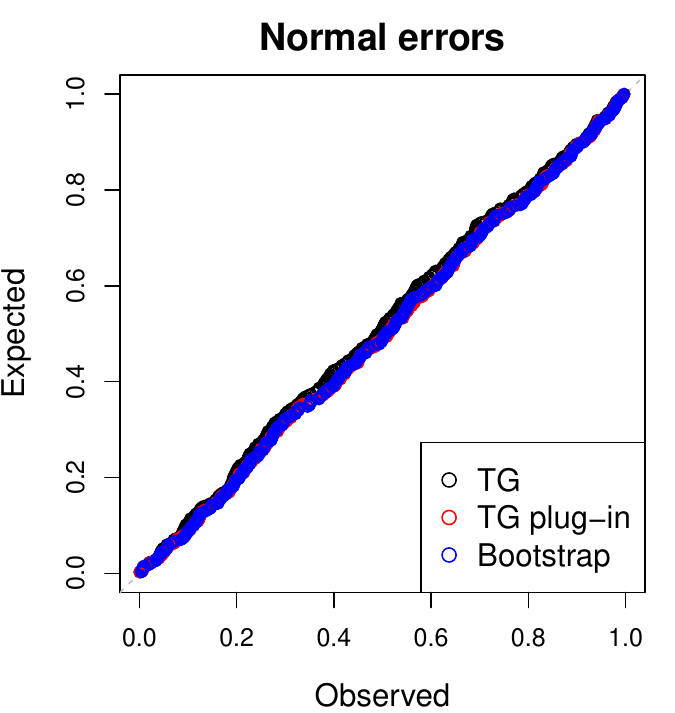}
\includegraphics[width=0.24\textwidth]{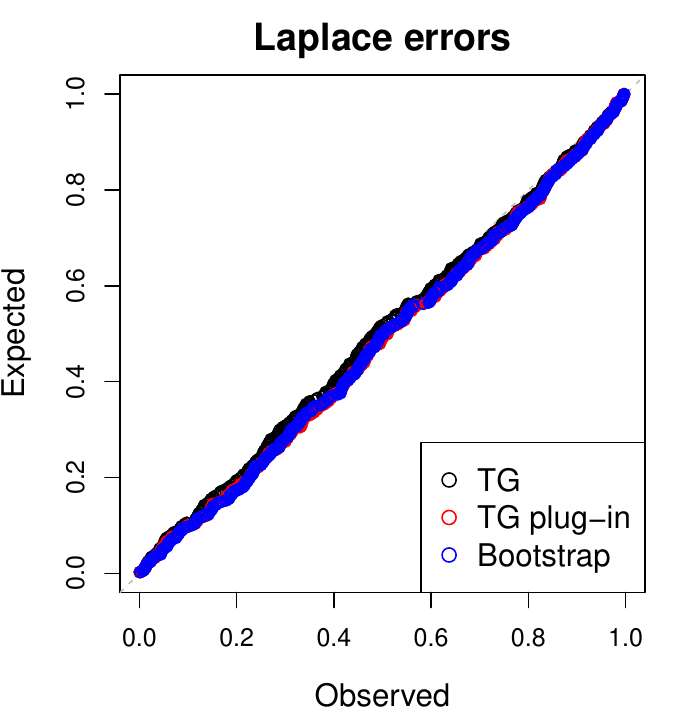}
\includegraphics[width=0.24\textwidth]{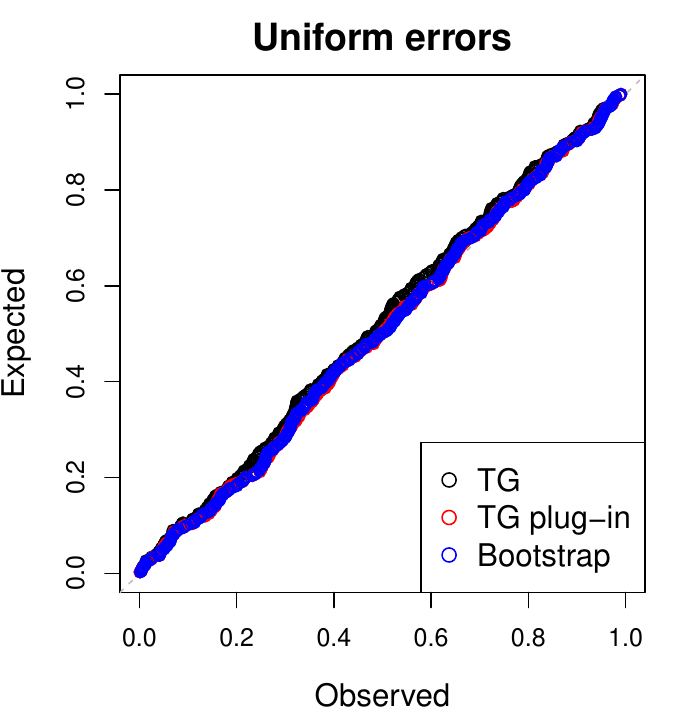}
\includegraphics[width=0.24\textwidth]{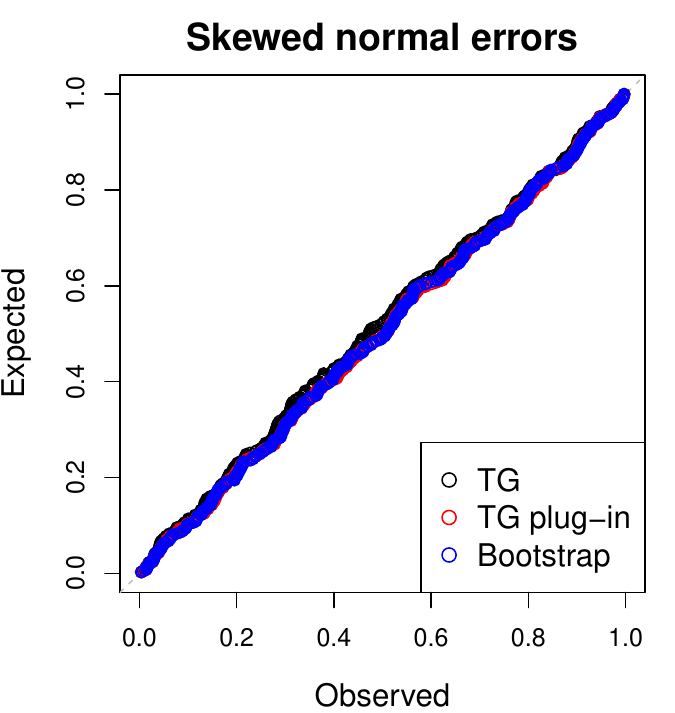}
\caption{\it\small P-values are shown, after each of 3 steps of LAR.}
\label{fig:lo_cor_pval}
\end{subfigure}

\bigskip\smallskip\smallskip
\begin{subfigure}[b]{\textwidth}
\centering
All steps, pivotal statistics \smallskip \\
\includegraphics[width=0.24\textwidth]{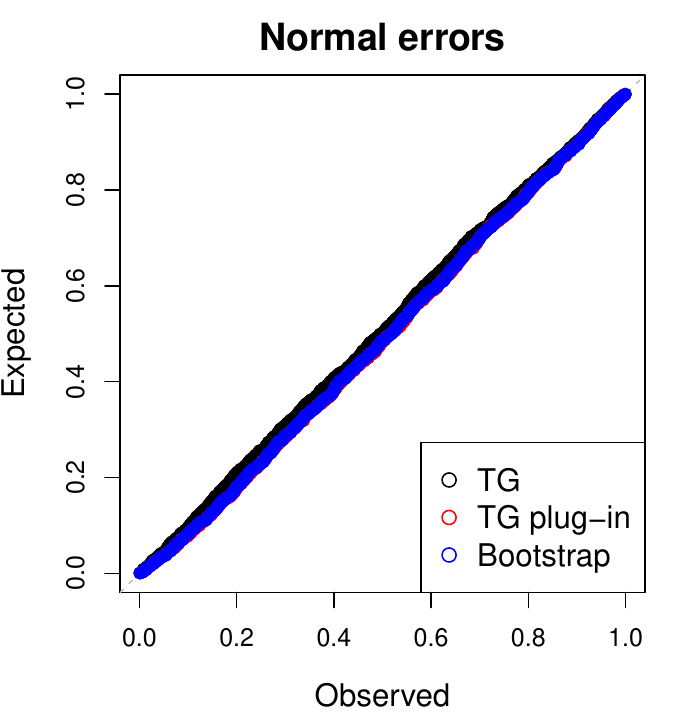}
\includegraphics[width=0.24\textwidth]{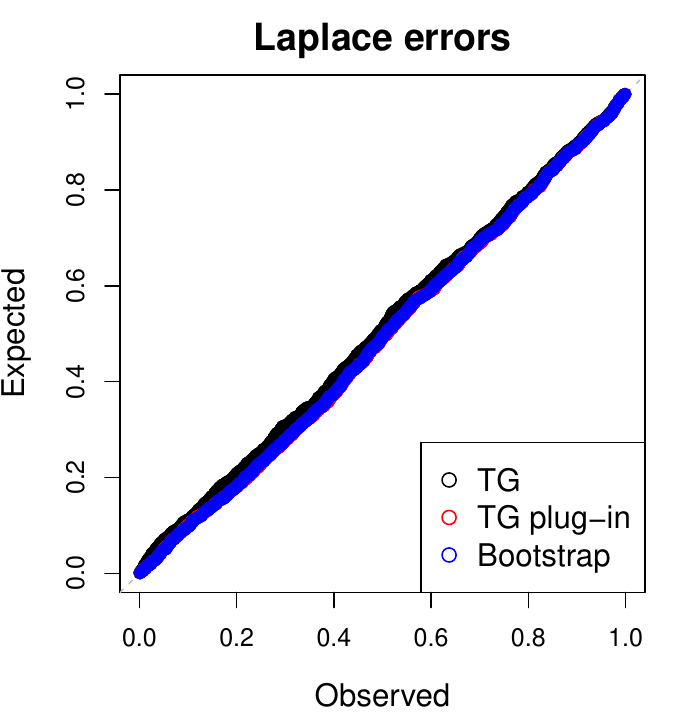}
\includegraphics[width=0.24\textwidth]{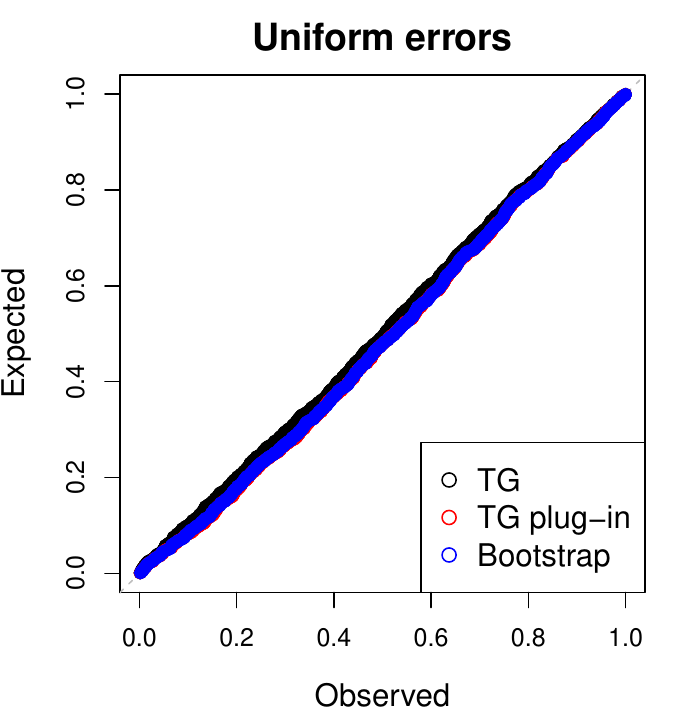}
\includegraphics[width=0.24\textwidth]{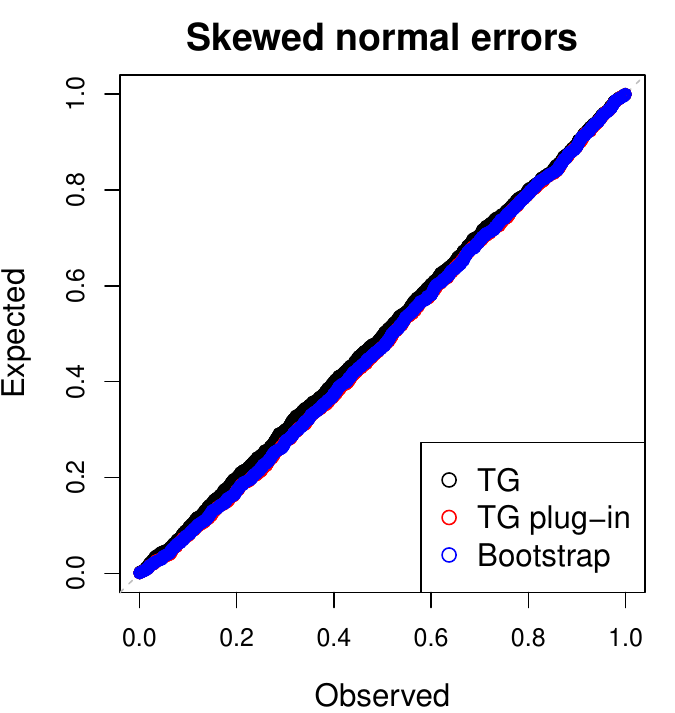}
\caption{\it\small Pivotal statistics are shown, aggregated over all
  3 steps of LAR.}
\label{fig:lo_cor_pivot}
\end{subfigure}

\caption{\it\small QQ plots as in Figure \ref{fig:lo}, but in a setup
  where the predictor variables have pairwise correlation 0.5.} 
\label{fig:lo_cor}
\end{figure}

\subsection{Confidence intervals for uniform, Laplace, and 
  skew normal noise}
\label{app:more_intervals}

Figures \ref{fig:int_e2} through \ref{fig:int_e4}
show sample confidence intervals for the problem setting of
Section \ref{sec:interval_ex},
when the error distribution is uniform, Laplace, and skew normal, 
respectively. 

\begin{figure}[htbp]
\centering
\includegraphics[width=\textwidth]{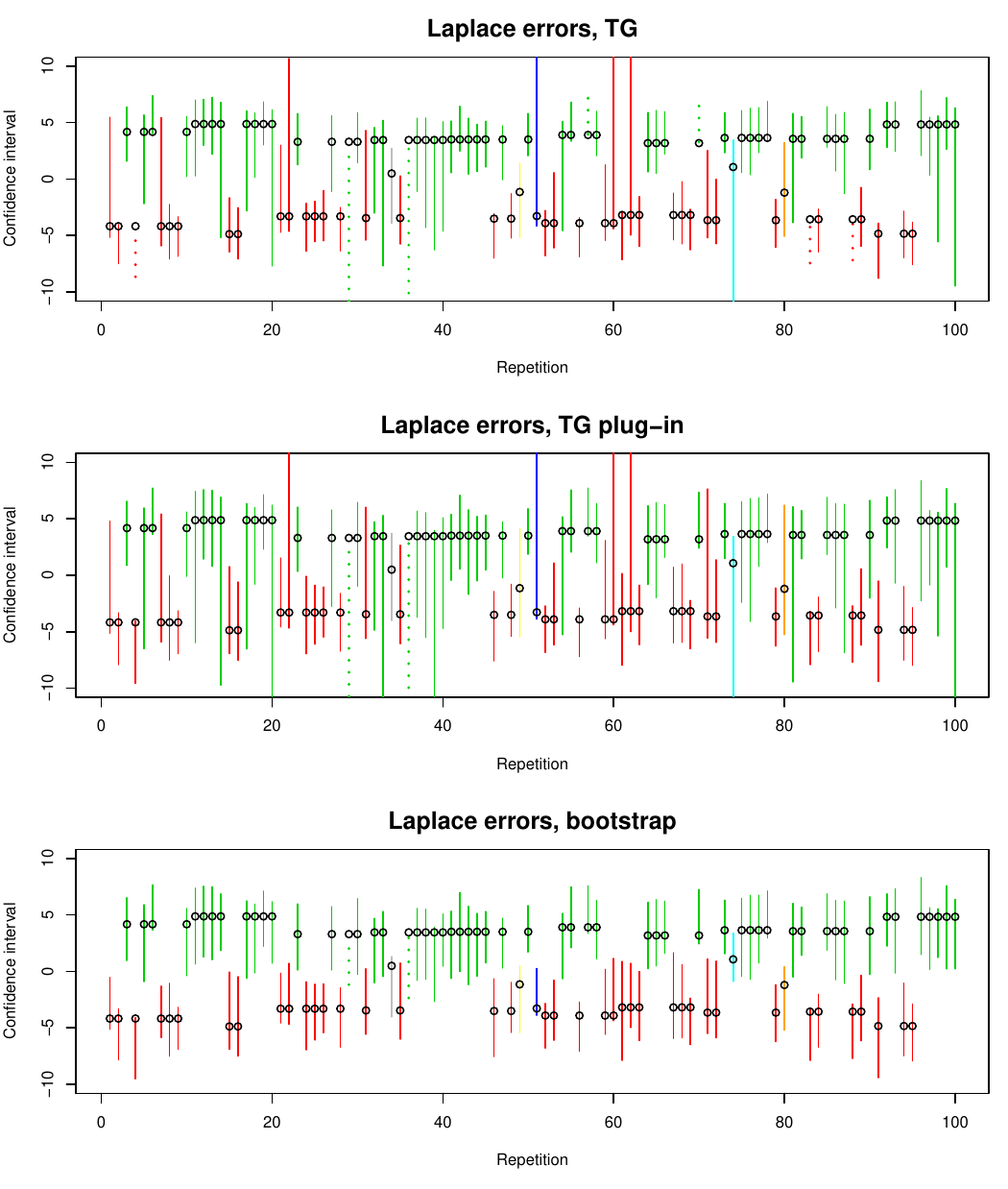}
\caption{\small\it Confidence intervals from 100 draws of $Y$, similar
  to those in Figure \ref{fig:int_e1}, but under a uniform
  noise distribution.}
\label{fig:int_e2}
\end{figure}

\begin{figure}[htbp]
\centering
\includegraphics[width=\textwidth]{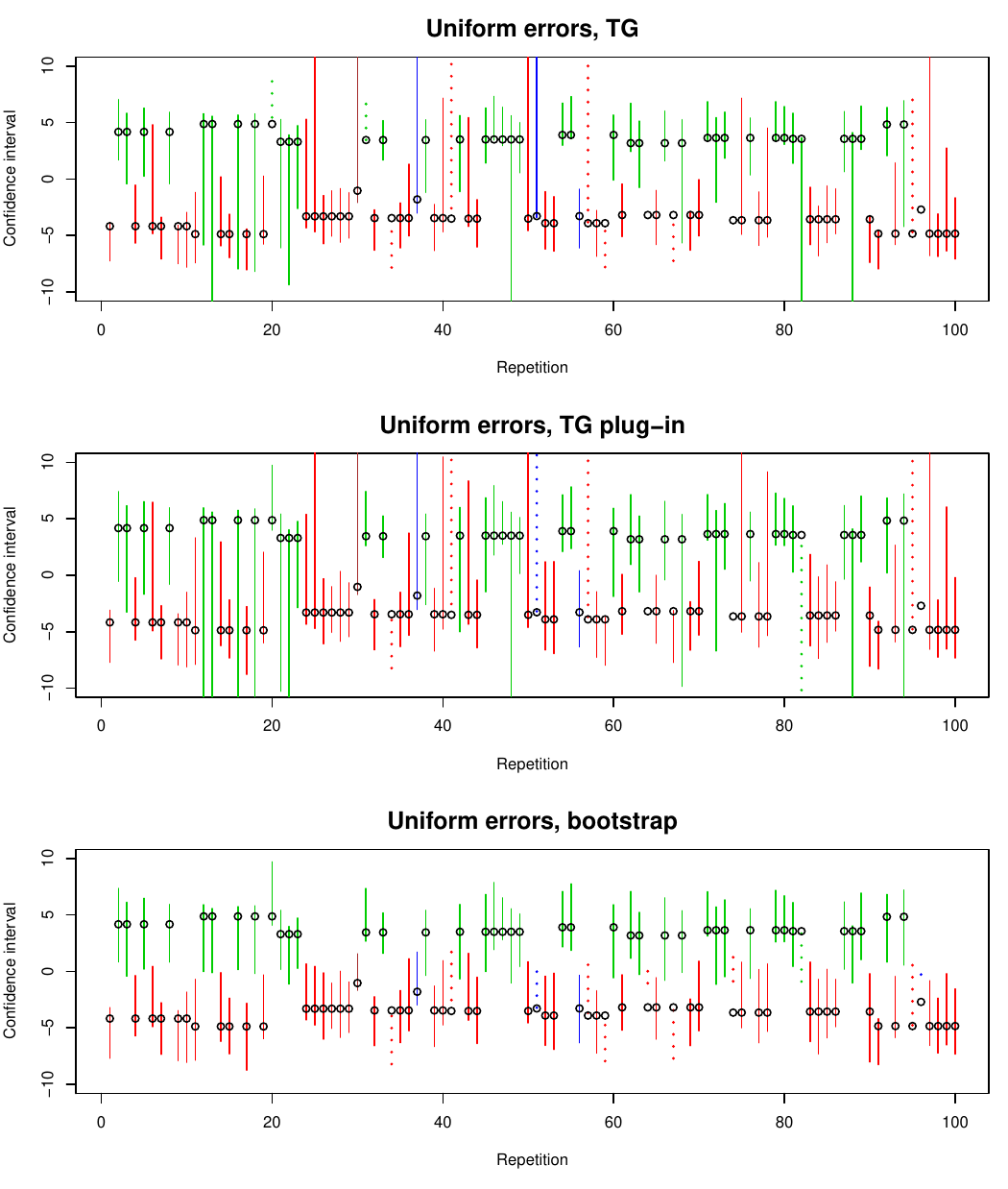}
\caption{\small\it Confidence intervals from 100 draws of $Y$, similar
  to those in Figure \ref{fig:int_e1}, but under a Laplace
  noise distribution.}
\label{fig:int_e3}
\end{figure}

\begin{figure}[htbp]
\centering
\includegraphics[width=\textwidth]{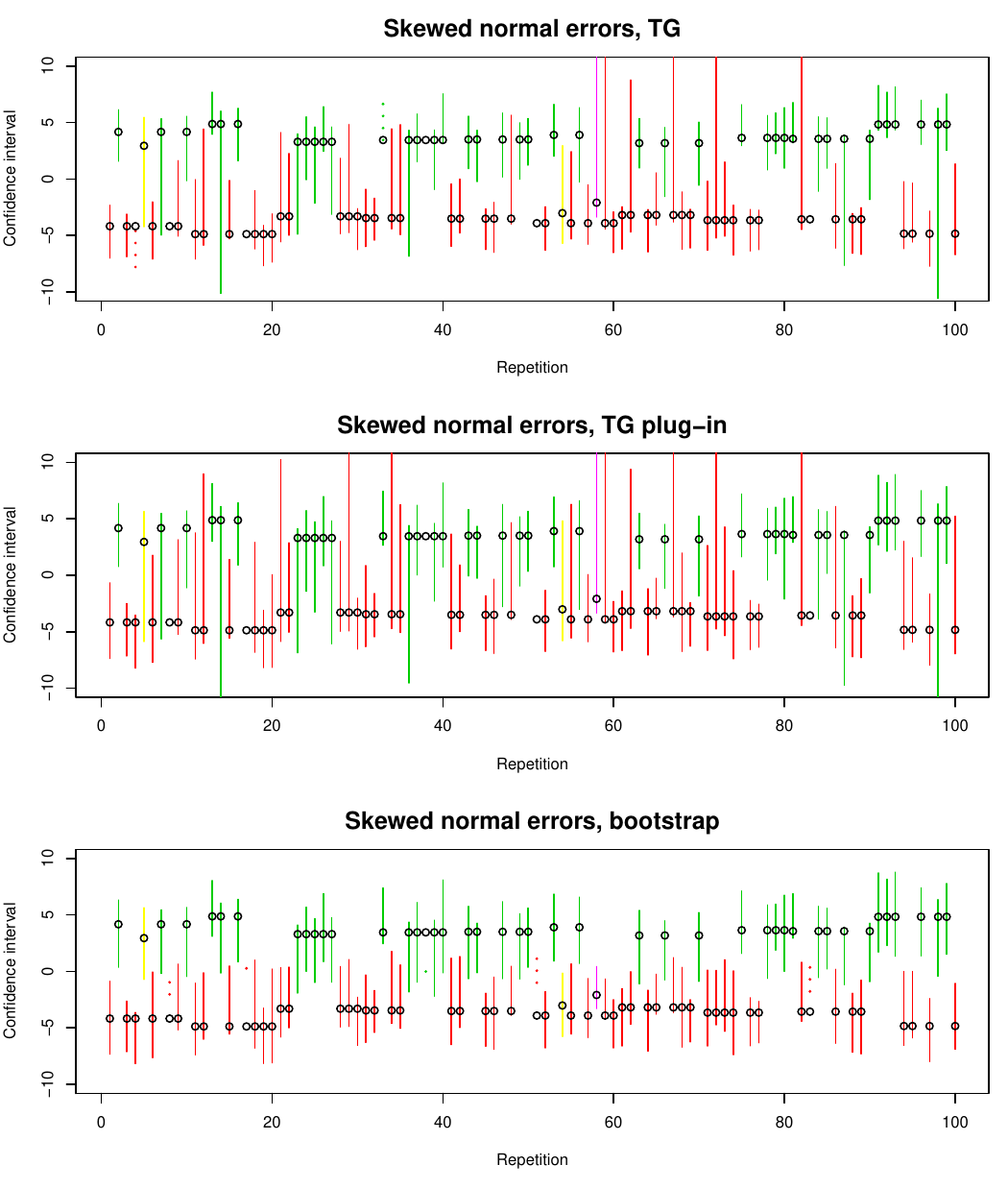}
\caption{\small\it Confidence intervals from 100 draws of $Y$, similar
  to those in Figure \ref{fig:int_e1}, but under a skew
  normal noise distribution.}
\label{fig:int_e4}
\end{figure}

\subsection{Confidence interval summary statistics for correlated
  predictors} 
\label{app:interval_ex_cor}

Table \ref{tab:int_cor} gives summary statistics of confidence
intervals obtained by inverting the original TG, plug-in TG, and 
bootstrap TG statistics, as in Table \ref{tab:int} of Section
\ref{sec:interval_ex}, but for the correlated predictors setup
described in Section \ref{app:pval_ex_cor}.   

\begin{table}[htbp]
\small
\centering
\begin{tabular}{|r|r|}
\multicolumn{2}{r}{} \\ 
\multicolumn{2}{r}{} \\ 
\hline
\multirow{3}{*}{N} 
& \multicolumn{1}{r|}{TG} \\
& \multicolumn{1}{r|}{Plug-in} \\
& \multicolumn{1}{r|}{Boot} \\
\hline
\multirow{3}{*}{L} 
& \multicolumn{1}{r|}{TG} \\
& \multicolumn{1}{r|}{Plug-in} \\
& \multicolumn{1}{r|}{Boot} \\
\hline
\multirow{3}{*}{U}
& \multicolumn{1}{r|}{TG} \\
& \multicolumn{1}{r|}{Plug-in} \\
& \multicolumn{1}{r|}{Boot} \\
\hline
\multirow{3}{*}{S}
& \multicolumn{1}{r|}{TG} \\
& \multicolumn{1}{r|}{Plug-in} \\
& \multicolumn{1}{r|}{Boot} \\
\hline
\end{tabular}
\begin{tabular}{|rrr|}
\hline
\multicolumn{3}{|c|}{Step 1} \\
\hline
Coverage & Power & Width \\
\hline
0.908 & 0.220 & 6.907 \\
0.926 & 0.186 & 8.186 \\
0.924 & 0.192 & 4.973 \\
\hline
0.912 & 0.264 & 6.510 \\
0.928 & 0.182 & 7.341 \\
0.934 & 0.176 & 5.117 \\
\hline
0.910 & 0.226 & 6.826 \\
0.926 & 0.154 & 8.192 \\
0.918 & 0.172 & 4.949 \\
\hline
0.904 & 0.240 & 6.479 \\
0.912 & 0.174 & 7.717 \\
0.908 & 0.192 & 4.973 \\
\hline
\end{tabular}
\begin{tabular}{|rrr|}
\hline
\multicolumn{3}{|c|}{Step 2} \\
\hline
Coverage & Power & Width \\
\hline
0.920 & 0.244 & 25.960 \\
0.922 & 0.210 & 30.113 \\
0.916 & 0.254 & 8.745 \\
\hline
0.886 & 0.290 & 23.668 \\
0.894 & 0.264 & 26.841 \\
0.916 & 0.294 & 8.769 \\
\hline
0.898 & 0.262 & 25.371 \\
0.906 & 0.200 & 29.211 \\
0.886 & 0.280 & 8.817 \\
\hline
0.910 & 0.262 & 24.502 \\
0.920 & 0.218 & 28.979 \\
0.904 & 0.254 & 8.697 \\
\hline
\end{tabular}
\begin{tabular}{|rrr|}
\hline
\multicolumn{3}{|c|}{Step 3} \\
\hline
Coverage & Power & Width \\
\hline
0.904 & 0.110 & 55.614 \\
0.908 & 0.106 & 66.083 \\
0.914 & 0.116 & 10.667 \\
\hline
0.894 & 0.126 & 54.351 \\
0.894 & 0.130 & 60.831 \\
0.884 & 0.148 & 10.583 \\
\hline
0.920 & 0.106 & 52.786 \\
0.922 & 0.098 & 63.915 \\
0.910 & 0.122 & 10.474 \\
\hline
0.892 & 0.136 & 56.700 \\
0.894 & 0.120 & 68.143 \\
0.896 & 0.122 & 10.486 \\
\hline
\end{tabular}
\caption{\small\it Summary statistics for 90\% confidence intervals,
  as in Table \ref{tab:int}, but in a modified problem setting 
  such that the predictor variables have pairwise correlation 0.5. The 
  standard errors are roughly 0.01, 0.02, and 0.87 for the coverage,
  power, and width statistics, respectively.}     
\label{tab:int_cor}
\end{table}

\subsection{Proof of Theorem \ref{thm:highdim}}
\label{app:highdim}

Let us denote by $N_j$ the number of observations in the $j$th column 
of the data array $Y_{ij}$, $i=1,\ldots,m$, $j=1,\ldots,d$ that are
drawn from the $N(B,1)$ mixture component.  Similarly, let $N'_j$
denote the number of observations in the $j$th column drawn from the  
$N(0,1)$ mixture component.  Then we will define $E$ to be the event
\begin{equation*}
E = \Big\{ \text{For some $j=1,\ldots,d$, we have $N_j=m$ and $N'_\ell
  \geq  m-2\pi m d$ for all $\ell \not= j$} \Big\}.
\end{equation*} 
In words, $E$ is the event that exactly one column has all of its
observations drawn from $N(B,1)$, and each of the rest of the $d-1$
columns have at least $m-2\pi m d$ observations from $N(0,1)$.
We calculate
\begin{align*}
\P(E) &= d \pi^m \P\big(N'_1 \geq m- 2\pi m d\big)^{d-1}
\\
&= \Big(1-\P\big(N'_1+\tilde{N}_1 \geq 2\pi m d\big)
\Big)^{d-1} \\
&\geq \Bigg(1-\frac{1}{d}\Bigg)^{d-1} \\
&\to 1/e,
\end{align*}
where in the second line we used that $d \pi^m=1$ by
construction, and introduced 
the notation \smash{$\tilde{N}_j$} for the number of observations in
column $j$ that are drawn from the $N(-B,1)$ mixture component; in the
third line we used Markov's inequality.  

On the event $E$, intersected with an event whose probability
tends to one, we have $W_{(1)},W_{(2)} \to \infty$, and furthermore
\begin{align*}
\sqrt{m}W_{(1)} &\geq \sqrt{m} B + Z_0 \geq
\sqrt{m}B/2, \\
\sqrt{m}W_{(2)} &\leq 2\pi m^{3/2} d B + \max_{j=1,\ldots,d-1} \; Z_j 
\leq 4\pi m^{3/2} d B,
\end{align*}
where $Z_0,Z_1,\ldots,Z_{d-1}$ denote standard normals.  We note that 
the ultimate bounds on the right-hand sides in the two lines above are  
extremely loose, but will suffice for our purposes.  Hence using Mills'
ratio, we can bound the TG statistic on the event in consideration by 
\begin{align*}
\cT(Y;0) &\leq
\exp\Bigg(-\frac{\big(mW_{(1)}^2-mW_{(2)}^2\big)}{4}\Bigg)  
\frac{W_{(2)}}{W_{(1)}} \Bigg(1 + \frac{2}{mW_{(2)}^2}\Bigg) \\
&\leq 2 \exp\Bigg(-\frac{\big(mW_{(1)}^2-mW_{(2)}^2\big)}{4}\Bigg), 
\end{align*}
for sufficiently large $d$.  But on this same event we have that
\begin{align*}
mW_{(1)}^2-mW_{(2)}^2 \geq mB^2 \Bigg(\frac{1}{4} - 16\pi^2 m^2
d^2\Bigg),
\end{align*}
and it is straightforward to check that the right-hand side of
the bound above diverges to $\infty$, given our assumptions on
$m,d,\pi,B$. Therefore, we have shown that on an event whose
probability tends to at least $1/e$, the TG statistic converges to
0. 

As for the conditional result, notice that for any model $(j,s)$, we have
by symmetry (under $\mu=0$) \smash{$\P(T(Y;j,s,0) \leq t \,|\,
  \hat{M}(Y)=(j,s))=\P(\cT(Y;0) \leq t)$}, as well as \smash{$\P(E \,|\, 
  \hat{M}(Y)=(j,s))=\P(E)$}. Hence the conditional TG statistic 
\smash{$T(Y;j,s,0) \,|\,  \hat{M}(Y)=(j,s)$} itself cannot be asymptotically
uniform, and converges to 0 on a event whose limiting probability is at least
$1/e$, conditional on \smash{$\hat{M}(Y)=(j,s)$}.

\subsection{Some thoughts on instability in high dimensions}   
\label{app:instability}

The TG statistic is defined by the ratio of normal tail probabilities. 
If the dimension $d$ is large (in which case we are searching through
a large space of models), or there are some large effects, then we
often find ourselves evaluating the pivot far into the tails. 
The point of evaluation is given by a linear function of the data,
which should 
itself converge to a Gaussian distribution (at least when $d$ is
finite). But even a small amount of non-Gaussianity is magnified 
when we are in the tails.   To see this, consider the function  
\begin{equation*}
H_p(t) = 
\frac{\int_{t+1}^\infty p(z) \, dz}{\int_t^\infty p(z) \, dz}.
\end{equation*}
The left plot in Figure \ref{fig:tail} shows two densities $p$ and $q$
which are nearly indistinguishable. The right plot shows their
corresponding tail functions $H_p$ and $H_q$. Even
though $p$ and $q$ are close, we see that $H_p$ and $H_q$ are quite
different.  
The message is that any inferential method that depends
heavily on extreme tail behavior could be unreliable.

\begin{figure}[tb]
\centering
\includegraphics[width=0.4\textwidth]{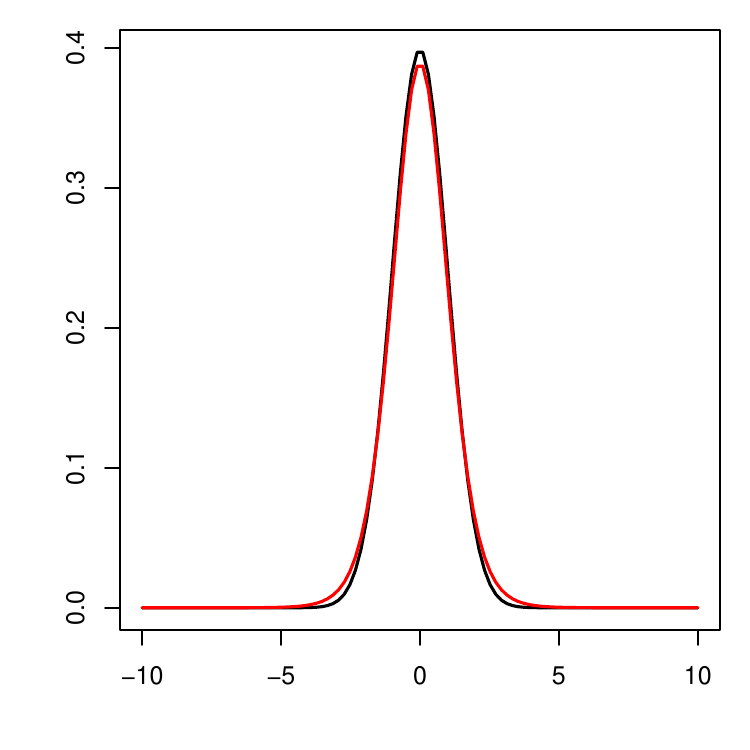}
\includegraphics[width=0.4\textwidth]{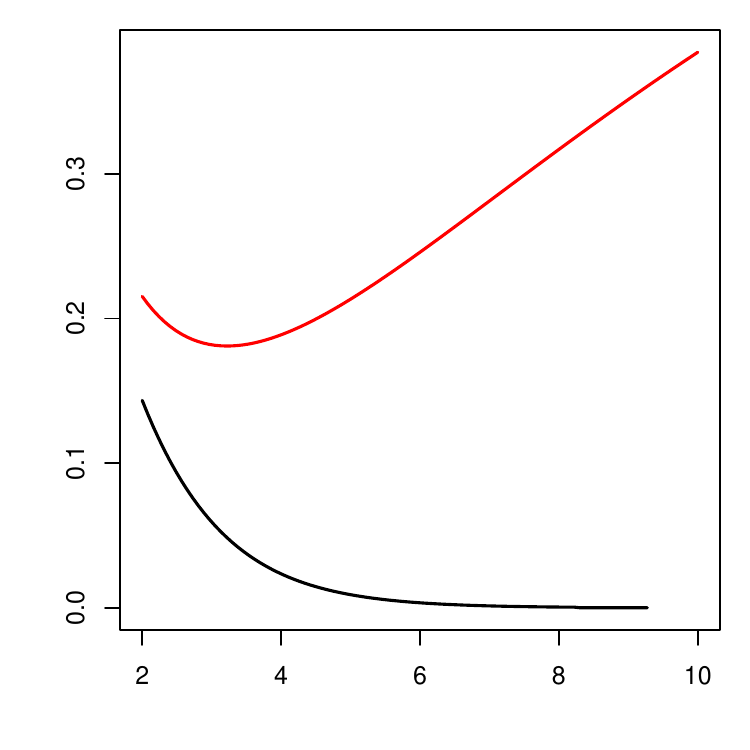}
\caption{\it\small The left plot shows two densities $p,q$, in black
  and red; the right shows their tail functions $H_p,H_q$ (in 
  corresponding colors).}
\label{fig:tail}
\end{figure}

Perhaps more visually striking is a plot of the TG statistic, when
viewed as a function of $y$ (for $X$ fixed).  This is shown in
Figure \ref{fig:nonsmooth}, where the statistic is used to test
$\mu=0$, and we used the same setup---thus the same model selection   
partition elements, and 
even matching colors---as in Figure \ref{fig:partition}.  Here $n=2$,  
so it is possible to fully visualize the TG statistic as a function of 
$y \in \R^2$.  This function is not well-behaved
at the boundaries between partition elements corresponding to different  
model selection events.  Technically, this function is continuous on the
interior of each partition element, which permits an application of the
(uniform) continuous mapping theorem when $d$ is fixed.  But the
derivatives at the boundaries are infinite and, especially in high-dimensional
problem settings, there is a nonnegligible probability of being near a
boundary. Thus a small perturbation to the data could have a dramatic effect on
the value of the pivot.   

\begin{figure}[p]
\centering
\vspace{-70pt}
\includegraphics[width=0.9\textwidth]{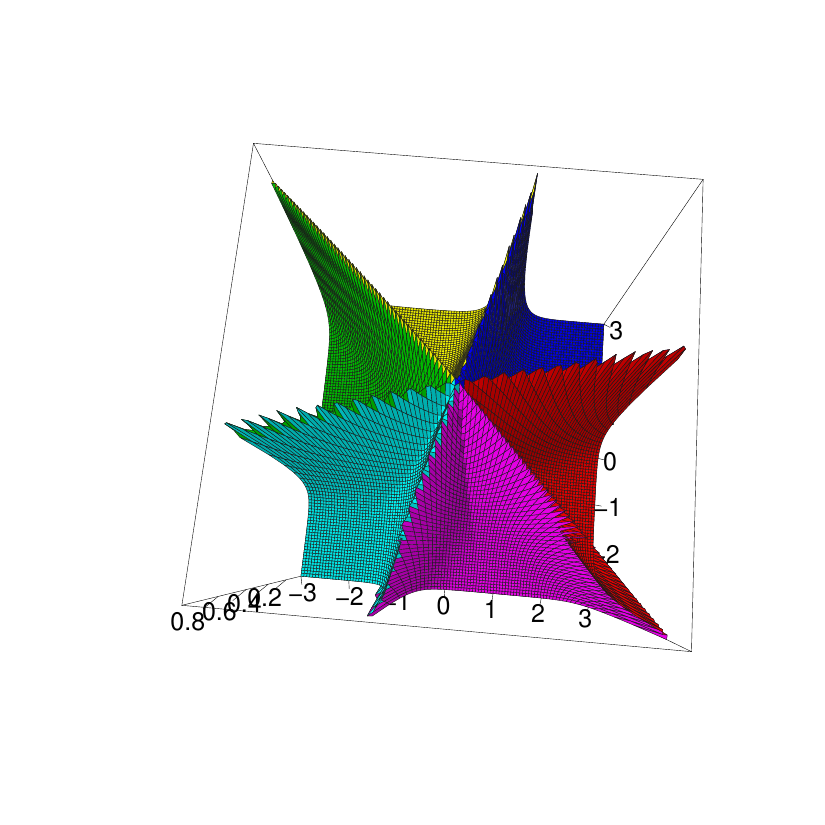} \\
\vspace{-140pt}
\includegraphics[width=0.9\textwidth]{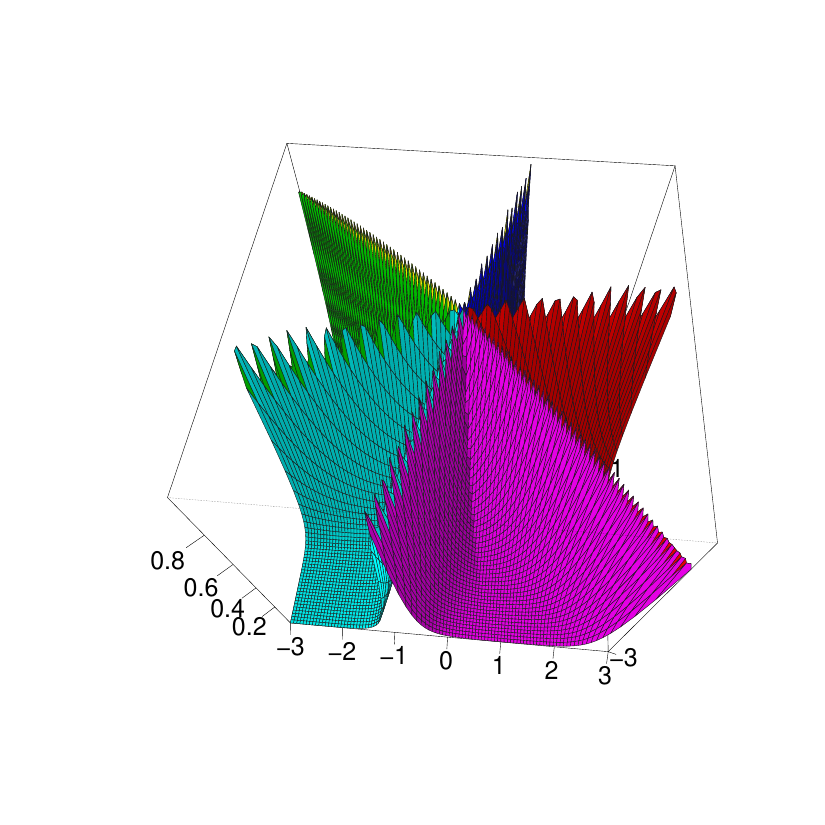}
\vspace{-50pt}
\caption{\it\small Two 3d views of the TG statistic, with the pivot
  value set at $\mu=0$, in same setup as in Figure
  \ref{fig:partition}.  Here $n=2$, and the statistic is plotted as a
  function of $y \in \R^2$.} 
\label{fig:nonsmooth}
\end{figure}

\bibliographystyle{agsm}
\bibliography{ryantibs}

\end{document}